\newcommand\reallywidetilde[1]{\ThisStyle{%
		\setbox0=\hbox{$\SavedStyle#1$}%
		\stackengine{-.1\LMpt}{$\SavedStyle#1$}{%
			\stretchto{\scaleto{\SavedStyle\mkern.2mu\AC}{.5150\wd0}}{.6\ht0}%
		}{O}{c}{F}{T}{S}%
}}
\DeclareMathOperator{\conv}{conv}                           % convex hull
\DeclareMathOperator{\Lip}{Lip}                             % Lipschitz functions
\newcommand{\pten}{\ensuremath{\widehat{\otimes}_\pi}}           % projective tensor product
\newcommand{\vspan}{\operatorname{span}} % vector span
\newcommand{\N}{\mathbb{N}}             % natural numbers
\newcommand{\R}{\mathbb{R}}             % real numbers
\newcommand{\C}{\mathbb{C}}             % complex numbers
\newcommand{\K}{\mathbb{K}}
\newcommand{\D}{\mathbb{D}}
\newcommand{\norm}[1]{\left\|{#1}\right\|}                  % norm
\newcommand{\F}{\mathcal{F}}                                % Lipschitz-free space
\newcommand{\Hinf}{\mathcal{H}^{\infty}}                              % holomorphic bounded mappings
\renewcommand{\Re}{\operatorname{Re}} % parte real
\def\<{\langle}
\def\>{\rangle}
\theoremstyle{plain}
\newtheorem{theorem}{Theorem}[section]
\newtheorem{lemma}[theorem]{Lemma}
\newtheorem{corollary}[theorem]{Corollary}
\newtheorem{proposition}[theorem]{Proposition}
\theoremstyle{definition}
\newtheorem*{definition*}{Definition}
\newtheorem{definition}[theorem]{Definition}
\newtheorem{question}{Question}
\theoremstyle{remark}
\newtheorem{remark}[theorem]{Remark}
\begin{document}

	\title{\textbf{Linearization of holomorphic Lipschitz functions}}

	\author[R. M. Aron]{Richard Aron}
	\address{Department of Mathematical Sciences, Kent State University, Kent, OH 44242
		USA} \email{aron@math.kent.edu}
	
	\author[V. Dimant]{Ver\'onica Dimant}
	\address{Departamento de Matem\'{a}tica y Ciencias, Universidad de San
		Andr\'{e}s, Vito Dumas 284, (B1644BID) Victoria, Buenos Aires,
		Argentina and CONICET} \email{vero@udesa.edu.ar}
	
	\author[L. C. Garc\'ia-Lirola]{Luis C. Garc\'ia-Lirola}
	\address{Departamento de Matem\'{a}ticas, Universidad de Zaragoza, 50009, Zaragoza, Spain}
	%\newline	\href{http://orcid.org/0000-0001-9211-4475}{ORCID: \texttt{0000-0001-9211-4475} } }
\email{luiscarlos@unizar.es}

\author[M. Maestre]{Manuel Maestre}
\address{Departamento de An\'{a}lisis Matem\'{a}tico, Universidad de Valencia, Doctor Moliner 50, 46100 Burjasot,
	Valencia, Spain
}
\email{manuel.maestre@uv.es}

\keywords{Banach space; holomorphic function; Lipschitz function; Linearization; Symmetric regularity}

\subjclass[2020]{
	Primary
	46E15,  %Banach spaces of continuous, differentiable or analytic functions
	46E50;  %Spaces of differentiable or holomorphic functions on infinite-dimensional spaces
	Secondary
	46B20,  %Geometry and structure of normed linear spaces
	46B28	%   Spaces of operators; tensor products; approximation properties
}

\maketitle

\begin{abstract}
	
	Let $X$ and $Y$ be complex Banach spaces with $B_X$ denoting the open unit ball of $X.$ This paper studies various aspects of the {\em holomorphic Lipschitz space} $\mathcal HL_0(B_X,Y)$, endowed with the Lipschitz norm. This space
	is the intersection of the spaces, $\Lip_0(B_X,Y)$ of Lipschitz mappings  and  $\mathcal H^\infty(B_X,Y)$ of
	bounded holomorphic  mappings,  from $B_X$ to $Y$.
	Thanks to the Dixmier-Ng theorem, $\mathcal HL_0(B_X, \mathbb C)$ 
	is indeed a dual space, whose predual $\mathcal G_0(B_X)$ shares linearization properties with both the Lipschitz-free space and Dineen-Mujica predual of $\mathcal H^\infty(B_X)$. We explore the similarities and differences between these spaces, and combine techniques to study the properties of the space of holomorphic Lipschitz functions. In particular, we get that $\mathcal G_0(B_X)$ contains a 1-complemented subspace isometric to $X$  and that $\mathcal G_0(X)$ has the (metric) approximation property whenever $X$ has it. We also  analyze when $\mathcal G_0(B_X)$ is a subspace of $\mathcal G_0(B_Y)$, and we obtain an analogous to Godefroy's characterization of functionals with a unique norm preserving extension to the holomorphic Lipschitz context.
\end{abstract}

\section{Introduction}

Linearizing non-linear functions is a typical procedure in infinite dimensional analysis. Originating nearly 70 years ago with Grothendieck \cite{Gro} (and his research about linearization of bilinear mappings through the projective tensor product), the practice of identifying spaces of continuous \emph{non-linear}  functions with spaces of continuous \emph{linear} mappings defined on Banach spaces has  proved to be a useful technique.
Accordingly, the study of geometric and topological properties of these \emph{linearizing Banach spaces} has increasingly attracted interest.

Lipschitz functions (defined on pointed metric spaces) and holomorphic bounded functions (defined on the open unit ball of a Banach space) are really different both as sets and as function spaces. However, when looking at their linearization processes several similarities emerge. The purpose of this article is to study, in light of these resemblances, the new set of functions consisting of the intersection of the previous sets. Lipschitz holomorphic functions defined on  the open unit ball of a Banach space taking the value 0 at 0 will be our focus of attention. In the exploration of this set we take advantage of a result of Ng \cite{Ng} concerning the existence of preduals and all the background about related linearization processes.

We begin with a brief review of important terms and concepts. General references for Lipschitz functions include \cite{GK} and \cite{Weaver} and a standard reference for holomorphic functions on finite or infinite dimensional domains is \cite{MuLibro}. The linearization process for bounded holomorphic functions is originally developed in \cite{Mu}. A review about linearization procedures both for Lipschitz functions and for bounded holomorphic functions appeared in the recent survey \cite{GHT} while a general approach to linearizing non-linear sets of functions was settled in \cite{CZ}.

For a metric space $(M,d)$ and a Banach space $Y,$ let $\Lip(M,Y)$ be the vector space of all  $f\colon M \to Y$ such that $\norm{f(x_1) - f(x_2)} \leq C d(x_1,x_2)$ for some $C > 0$ and for all $x_1 \neq x_2 \in M$.
The smallest $C$ in the above definition is the {\em Lipschitz constant} of $f, L(f)$. Let $0 \in M$ denote an arbitrary fixed point. In order to get a normed space, we will be particularly interested in the subspace $\Lip_0(M,Y)$ consisting of those $f \in \Lip(M,Y)$ such that $f(0) = 0.$ In this way, $L(f) = 0$ if and only if $f = 0$, and so $\norm{\cdot}=L(\cdot)$ defines a norm on $\Lip_0(M,Y)$. \\

For complex Banach spaces $X$ and $Y$ and open set $U\subset X$,  denote by $\mathcal H^\infty(U,Y)$ the vector space of all $f\colon U \to Y$ such that $f$  is  holomorphic (i.e.\ complex
Fr\'echet differentiable)   and bounded  on  $U$, endowed with the supremum  norm. 
In both the Lipschitz and $\mathcal H^\infty$ situations, if the range $Y = \K,$ then the notation is shortened to $\Lip_0(M)$ and $\mathcal H^\infty(U)$.

It is known that  $\Lip_0(M)$ and $\mathcal H^\infty(U)$ are dual spaces and that in some special situations, the predual is unique. The construction of a (or, in some cases, the) predual follows the same lines for both the Lipschitz and $\mathcal H^\infty$ situations: Calling $X$ one of $\Lip_0$ or $\mathcal H^\infty,$ we consider those functionals $\varphi \in X^*$ such that $\varphi|_{\overline{B}_X}$ is continuous when $\overline{B}_X$ is endowed with the compact-open topology. Among such functionals are the evaluations $f \leadsto \delta(x)(f) \equiv f(x)$ where $x$ ranges over the domain of $f \in X.$ In the case of $\Lip_0(M),$ the closed span of the set of such $\varphi$ will be denoted $\mathcal F(M)$  while the analogous closed span for $\mathcal H^\infty(U)$ is $\mathcal G^\infty(U).$ Each of these is a Banach space, being a closed subspace of $\Lip_0(M)^*,$ and $\mathcal H^\infty(U)^*,$ respectively. Using a standard technique developed by Ng \cite {Ng}, it follows that $\mathcal F(M)^* \equiv \Lip_0(M)$ and $\mathcal G^\infty(U)^* \equiv \mathcal H^\infty(U).$\\

Among the most important common features of $\Lip_0$ and $\mathcal H^\infty$ is {\em linearization}. In each of the two cases below, $\delta$ is the evaluation inclusion taking $x \leadsto \delta(x).$ Also, for
$f$ in either $\Lip_0(M,Y)$ or $\mathcal H^\infty(U,Y),$ $T_f$ is the unique linear mapping making the diagram commute. Moreover, $\|f\| = \|T_f\|.$

\begin{equation*}
	\xymatrix{
		M \ar[r]^f  \ar[d]_{\delta}    &  Y  \\
		\mathcal{F}(M)  \ar[ru]_{T_f}  &
	} \qquad\qquad \xymatrix{
		U \ar[r]^f  \ar[d]_{\delta}    &  Y  \\
		\mathcal{G}^\infty(U)  \ar[ru]_{T_f}  &
	}
\end{equation*}

\subsubsection*{Notation}

$X, Y$ will stand for complex Banach spaces. We denote by $B_X$ (respectively $S_X$) its open unit ball (respectively unit sphere). $\mathcal L(X, Y)$ denotes the space of continuous linear maps from $X$ to $Y$, and $X^*=\mathcal L(X, \mathbb C)$. $\mathcal P(^m X, Y)$ stands for the space of continuous $m$-homogeneous polynomials, that is, those $P\colon X\to Y$ so that  there exists a continuous  $m$-linear symmetric map $\widecheck{P}\colon X\times \cdots \times X\to Y$ with $P(x)=\widecheck{P}(x,\ldots, x)$. We also write $\mathcal P(^m X)=\mathcal P(^m X, \mathbb C)$. We say that $P\in \mathcal P(^m X, Y)$ is of \emph{finite type} if   $P(x)=\sum_{j=1}^n [x_j^*(x)]^m y_j$ for certain $x_j^*\in X^*$ and $y_j\in  Y$. $\mathcal P_f(^m X, Y)$ stands for the space of finite type $m$-homogeneous polynomials. Moreover, we set $\mathcal P(X, Y)$ (resp. $\mathcal P_f(X, Y)$) to be the space of finite sums of continuous  homogeneous polynomials (resp. homogeneous polynomials of finite type) from $X$ to $Y$. Also, $\mathbb D(z,r)$ (resp. $C(z,r)$) denotes the open disc (resp. the circumference) in $\C$ centered at $z$ with radius $r$, in particular $\D=\D(0,1)$.

Recall that $X$ is said to have the Bounded Approximation Property (BAP) if there is $\lambda>0$ such that the identity $I\colon X\to X$ can be approximated by finite-rank operators in $\lambda B_{\mathcal L(X,X)}$ uniformly on compact sets (equivalently, pointwise). If $\lambda=1$, then $X$ is said to have the Metric Approximation Property (MAP). If $X$ has $\lambda$-BAP and $Y$ is $\lambda'$-complemented in $X$, then $Y$ has $\lambda\lambda'$-BAP. Recall, also, the version of this notion without control of the norms: $X$ has the  Approximation Property (AP) if  the identity $I\colon X\to X$ can be approximated by finite-rank operators in $\mathcal L(X,X)$ uniformly on compact sets. We refer the reader to \cite{Casazza} for examples and applications.

\subsubsection*{Organization of the paper}

Section \ref{sec:HL} introduces the main space of interest, $\mathcal HL_0(B_X,Y),$ consisting of those  functions that are in both $\Lip_0(B_X,Y)$ and $\mathcal H^\infty(B_X,Y).$ A number of properties of $\mathcal HL_0(B_X,Y)$ are discussed and it is proved that this space really differs from $\Lip_0(B_X,Y)$ and $\mathcal H^\infty(B_X,Y)$ (in the sense that a nonseparable space can be injected in between). Then we focus on the predual $\mathcal G_0(B_X)$  of $\mathcal HL_0(B_X)$ (where $Y = \C$). Specifically, we will see that $\mathcal HL_0(B_X)$ has a canonical predual whose properties echo those of $\mathcal H^\infty(B_X)$ and $\Lip_0(B_X).$ When $X =\C$ with open unit disc $\D,$ one consequence of our work is a characterization of the extreme points of the closed ball of $\mathcal HL_0(\D)$ and of the norm attaining elements of $\mathcal HL_0(\D)$ considered as the dual of $\mathcal G_0(\D)$. In Section \ref{sec:AP} we deal with  the (metric) approximation property for $\mathcal G_0(B_X)$, again inspired by the results for $\mathcal G^\infty(B_X)$.  The final two sections involve a closer inspection of $\mathcal G_0(B_X)$ and its relationship with $\mathcal G_0(B_{X^{**}})$.  Section \ref{section: relation} begins by considering the interaction between $\mathcal G_0(B_X)$ and $\mathcal G_0(B_Y)$ when $X \subset Y$ and then focuses on the case of $X \subset X^{\ast\ast}$. The final section studies a natural connection between $\mathcal G_0(B_{X^{**}})$ and $\mathcal G_0(B_X)^{**}$ under the hypothesis of $X^{**}$ having the MAP. Among other things, this enables us to characterize, under natural conditions on $X$ and $X^{\ast\ast},$  when a function $f \in \mathcal HL_0(B_X)$ has a unique norm preserving extension to $\mathcal HL_0(B_{X^{\ast\ast}}).$	
Both sections make use of the concept of \emph{(Arens) symmetric regularity,} which is reviewed in Section \ref{section: relation}.

\section{The space of holomorphic Lipschitz functions and its predual}\label{sec:HL}

In the case that the metric space $M$ is  $B_X$, the open unit ball of a complex Banach space $X$, and  $Y$ is another complex Banach space,
$\Lip_0(B_X, Y)$ is the space of Lipschitz functions $f\colon B_X\to Y$ with $f(0)=0$
and:
$$
L(f)=\sup\left\{ \frac{\|f(x)-f(y)\|}{\|x-y\|}:\ x\neq y \in B_X\right\}.
$$

It is well known that $L(\cdot)$ defines a norm on $\Lip_0(B_X,Y)$ and $(\Lip_0(B_X,Y),L(\cdot))$ is a Banach space. Indeed, $\Lip_0(B_X,Y)$ is isometrically isomorphic to the space of operators $\mathcal L(\mathcal F(B_X), Y)$, where $\mathcal F(B_X)$ denotes the \emph{Lipschitz-free space} over $B_X$ (see e.g. \cite{GodSurvey, Weaver}).

Next, $\Hinf(B_X, Y)$ stands for the space of bounded holomorphic functions from $B_X$ to $Y$, which is a Banach space when endowed with the supremum norm.  Analogous to the Lipschitz case above, we have that $\Hinf(B_X,Y)$ is isometrically isomorphic to $\mathcal L(\mathcal G^\infty(B_X), Y)$, where $\mathcal G^\infty(B_X)$ is Mujica's canonical predual of $\Hinf(B_X)$ \cite{Mu} (we will review the space $\mathcal G^\infty(B_X)$ later in this section).

The parallel behavior of these Lipschitz and $\mathcal H^\infty$ spaces was the authors' motivation to introduce and study the following space and its canonical predual:
\begin{align*}
	\mathcal HL_0(B_X,Y) & =\Lip_0(B_X, Y)\cap \Hinf(B_X,Y).
\end{align*}
We will also denote $\mathcal{H}L_0(B_X)=\mathcal HL_0(B_X, \mathbb C)$. Sometimes we will deal with holomorphic Lipschitz functions without assuming $f(0)= 0$, and then  we use the notation $\mathcal {H}L(B_X, Y)$ and $\mathcal {H}L(B_X)$.

Since both normed spaces $\Hinf(B_X,Y)$ and $\Lip_0(B_X,Y)$ are complete (with their respective norms) and  each $f\in \mathcal HL_0(B_X,Y)$ satisfies $\|f\|_\infty\le L(f)$ we easily derive that $\mathcal HL_0(B_X,Y)$ is a Banach space with norm $L(\cdot)$.

Given $f\in \Hinf(B_X,Y)$ such that $df\in\Hinf(B_X,\mathcal L(X,Y))$ and $f(0)=0$,  by the Mean Value Theorem, we have that $\|f(x)-f(y)\|\le \|df\| \|x-y\|$ for any $x,y \in B_X.$
Then, $f\in\Lip_0(B_X,Y)$ and $L(f)\le \|df\|$. Conversely, if $f\in \mathcal{H}L_0(B_X,Y)$ we know that $df\in\mathcal H(B_X, \mathcal L(X,Y))$. Also, for $x,y\in B_X$,
$$
\|df(x)(y)\| = \lim_{h\to 0}\left\|\frac{f(x+hy)-f(x)}{h}\right\|\le L(f).
$$ This means that $df$ belongs to $\Hinf(B_X, \mathcal L(X,Y))$ and $\|df\|\le L(f)$.

This shows that there is another useful representation of our primary space of interest.
\begin{proposition}  \label{prop: HL0}$\mathcal{H}L_0(B_X,Y)=\{f\in\Hinf(B_X,Y):\ df\in\Hinf(B_X,\mathcal L(X,Y));\ f(0)=0\}$.\ \ Moreover, for every $f \in \mathcal HL_0(B_X,Y), \ L(f) = \norm{df};$ that is, $L(f) = \sup_{x \in B_X} \norm{df(x)}.$
\end{proposition}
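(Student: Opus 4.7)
The plan is to prove both inclusions of sets simultaneously with the matching norm inequalities; combining them yields both the set equality and the identity $L(f) = \|df\| = \sup_{x \in B_X} \|df(x)\|$.

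For the $\supseteq$ direction (with $L(f) \le \|df\|$), suppose $f \in \Hinf(B_X,Y)$ satisfies $df \in \Hinf(B_X,\mathcal L(X,Y))$ and $f(0)=0$. Given $x_1, x_2 \in B_X$, the segment $\gamma(t) = (1-t)x_2 + tx_1$, $t \in [0,1]$, lies in $B_X$ by convexity. Apply the fundamental theorem of calculus to $t \mapsto \varphi(f(\gamma(t)))$ for each $\varphi \in Y^*$, obtaining
$$
\varphi(f(x_1)-f(x_2)) = \int_0^1 \varphi\bigl(df(\gamma(t))(x_1-x_2)\bigr)\,dt;
$$
taking the supremum over $\varphi \in Y^*$ with $\|\varphi\|\le 1$ and invoking Hahn--Banach yields $\|f(x_1)-f(x_2)\| \le \|df\|\, \|x_1-x_2\|$. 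Hence $f \in \Lip_0(B_X,Y)$ and $L(f) \le \|df\|$.

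For the $\subseteq$ direction (with $\|df\| \le L(f)$), assume $f \in \mathcal{H}L_0(B_X,Y)$. Holomorphy of $f$ gives $df \in \mathcal H(B_X, \mathcal L(X,Y))$ automatically. Fix $x \in B_X$ and $y \in X$ with $\|y\| \le 1$. For $h \in \C$ small enough that $x + hy \in B_X$, the difference quotient $[f(x+hy)-f(x)]/h$ has norm at most $L(f)\|y\| \le L(f)$, so
$$
\|df(x)(y)\| = \lim_{h\to 0} \left\| \frac{f(x+hy) - f(x)}{h} \right\| \le L(f).
$$
Taking the supremum over such $y$ gives $\|df(x)\|_{\mathcal L(X,Y)} \le L(f)$, and then the supremum over $x \in B_X$ shows $df \in \Hinf(B_X,\mathcal L(X,Y))$ with $\|df\| \le L(f)$.

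The only non-routine step is the vector-valued Mean Value Theorem in the first direction, which I dispatch by composing with functionals $\varphi \in Y^*$ and applying Hahn--Banach to recover the vector-valued bound from the scalar one. Everything else is a direct comparison between the Lipschitz constant and the operator norm of the derivative, using the fundamental theorem of calculus on one side and the definition of $df$ on the other.
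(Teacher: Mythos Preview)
Your proof is correct and follows essentially the same approach as the paper's: the paper invokes the Mean Value Theorem to get $L(f)\le\|df\|$ and the difference-quotient limit to get $\|df\|\le L(f)$, exactly as you do. Your only addition is spelling out the vector-valued Mean Value inequality via composition with functionals and Hahn--Banach, which is the standard way to justify the step the paper cites by name.
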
	

\begin{remark} \label{rem: differential}
	If $f\colon B_X\to Y$ is a holomorphic function and $x \in B_X$ then $f(x+h)=\sum_{m=1}^\infty P_m(x)(h)$ for  $h$ in a suitable neighborhood of 0, where $P_m$ is an $m$-homogeneous polynomial. Recall that $df(x)(h)=P_1(x)(h)$ for every $h\in X$.    
\end{remark}

Note that $P|_{B_X}\in \mathcal {H}L_0(B_X, Y)$ for every $P\in \mathcal P(X,Y)$ such that $P(0)=0$, a fact that will be useful later.

When $Y = \C,$ we can define a mapping
\begin{align*}
	\Phi\colon \mathcal HL_0(B_X) & \to \Hinf (B_X,X^*)\\
	f & \mapsto df
\end{align*}

In general, $\Phi$ is an isometry {\em into} $\mathcal H^\infty(B_X,X^*),$ although if $X$ also equals  $ \C$, then $\Phi$ is {\em onto}. Indeed, in the one-dimensional case, $\Phi$
is surjective since every holomorphic function $f$ on $\mathbb D$ has a primitive that is Lipschitz whenever $f$ is bounded.   However, $\Phi$ is not surjective for $X\neq \mathbb C$. Indeed, given $P\in\mathcal P(^2X)$, we have that $P|_{B_X}\in \mathcal{H}L_0(B_X)$ and $dP\in\mathcal L(X, X^*)$ is symmetric (i.e. $dP(x)(y)=dP(y)(x)$ for every $x,y\in X$). Note that $df$ is linear only when $f$ is a 2-homogeneous polynomial.
Hence, a non-symmetric element of $\mathcal L(X, X^*)$ (which always exists whenever the dimension of $X$ is strictly bigger than one) cannot be in the range of $\Phi$.

In particular, we see that
\[\mathcal HL_0(\D) = \{ f \in \mathcal H^\infty(\D): \  f( 0)= 0 \ {\rm and \ } f^\prime \in \mathcal H^\infty(\D) \}.\]
A lot of research has been done on $\mathcal HL_0(\D)$ and on $\mathcal
HL_0(U)$ for certain domains $U \subset \C^n$ such as the Euclidean ball. See, e.g.,
\cite{AhSc,BLS,BBLS2018,BBLS2019,BT,Du,Pa,Ru}  where this topic is approached from different viewpoints than what is done here.

For the case of  $\mathcal HL(B_X,Y)$ we consider the norm $\|f\|_{\mathcal HL}=\max\{\|f(0)\|, L(f)\}$. The fact that this is a norm and that $(\mathcal HL(B_X,Y), \|\cdot\|_{\mathcal HL})$ is a Banach space follows easily. 
Note that $\|f\|_\infty\le 2\|f\|_{\mathcal HL}$ for any $f\in \mathcal HL(B_X,Y)$. Also, it is plain to see that $\mathcal HL_0(B_X,Y)$ is a 1-complemented subspace of $\mathcal HL(B_X,Y)$. Moreover, motivated by a similar result for $\Lip_0$-spaces (see \cite[Th. 1.7.2]{Weaver}) we get:

\begin{proposition}\label{prop: G vs G_0} Let $X, Y$ be complex Banach spaces. Then $\mathcal HL(B_X, Y)$ is isometric to a $1$-complemented subspace of $\mathcal HL_0(B_{X\oplus_1 \mathbb C}, Y)$.
\end{proposition}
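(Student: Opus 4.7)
The idea is to adapt Weaver's trick from \cite[Th.~1.7.2]{Weaver} to the holomorphic-Lipschitz setting. Given $f\in\mathcal{HL}(B_X,Y)$, define
$$T(f)\colon B_{X\oplus_1\C}\to Y,\qquad T(f)(x,\lambda)=f(x)-f(0)+\lambda f(0).$$
Note that if $\|x\|+|\lambda|<1$ then $\|x\|<1$, so $f(x)$ makes sense. The map $T(f)$ vanishes at the origin, is bounded (being the sum of a bounded function and a linear term on a bounded set), and is holomorphic in $(x,\lambda)$ because it is holomorphic in $x$ and affine in $\lambda$. Hence $T(f)\in\mathcal{HL}_0(B_{X\oplus_1\C},Y)$ once the Lipschitz property is verified, which follows from the next step.

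\textbf{Isometry.} From $T(f)(x_1,\lambda_1)-T(f)(x_2,\lambda_2)=(f(x_1)-f(x_2))+(\lambda_1-\lambda_2)f(0)$ we obtain
$$\|T(f)(x_1,\lambda_1)-T(f)(x_2,\lambda_2)\|\leq L(f)\|x_1-x_2\|+\|f(0)\|\,|\lambda_1-\lambda_2|\leq \|f\|_{\mathcal{HL}}\,\bigl(\|x_1-x_2\|+|\lambda_1-\lambda_2|\bigr),$$
so $L(T(f))\le \|f\|_{\mathcal{HL}}$. For the reverse inequality, restricting to $\lambda_1=\lambda_2=0$ gives $L(T(f))\ge L(f)$, while the pair $(0,\lambda),(0,0)$ gives $L(T(f))\ge \|f(0)\|$. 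Together they yield $L(T(f))=\max\{L(f),\|f(0)\|\}=\|f\|_{\mathcal{HL}}$, so $T$ is an isometric linear embedding.

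\textbf{Norm-one projection.} Define $P\colon \mathcal{HL}_0(B_{X\oplus_1\C},Y)\to \mathcal{HL}(B_X,Y)$ by
$$P(g)(x)=g(x,0)+\partial_\lambda g(0,0),$$
where $\partial_\lambda g(0,0):=dg(0,0)(0,1)\in Y$. By Proposition~\ref{prop: HL0}, $dg$ is bounded with $\|dg\|=L(g)$, so $\|\partial_\lambda g(0,0)\|\le L(g)$; hence $\|P(g)(0)\|\le L(g)$. Also $x\mapsto g(x,0)$ is holomorphic on $B_X$ and, since $x\mapsto(x,0)$ embeds $B_X$ isometrically into $B_{X\oplus_1\C}$, its Lipschitz constant is at most $L(g)$. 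Adding the constant $\partial_\lambda g(0,0)$ does not change the Lipschitz constant, so $L(P(g))\le L(g)$ and therefore $\|P(g)\|_{\mathcal{HL}}\le L(g)$. A direct computation gives $P(T(f))=f$: indeed $T(f)(x,0)=f(x)-f(0)$ and $\partial_\lambda T(f)(0,0)=f(0)$. Thus $T\circ P$ is a linear idempotent of norm $\le 1$ (hence $=1$) onto $T(\mathcal{HL}(B_X,Y))$, which proves the proposition.

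\textbf{Main obstacle.} The only delicate point is choosing the formula for $T$ so that three requirements hold simultaneously: (i) $T(f)$ vanishes at the new origin; (ii) the value $f(0)$ is encoded in the $\lambda$-derivative at $0$ and is therefore linearly recoverable; and (iii) the $\ell_1$-sum structure on $X\oplus_1\C$ converts the bound $\max\{L(f),\|f(0)\|\}$ into exactly $L(T(f))$. Once the formula $T(f)(x,\lambda)=f(x)-f(0)+\lambda f(0)$ is in place, everything else is routine — in particular holomorphicity is automatic because the $\lambda$-dependence is affine.
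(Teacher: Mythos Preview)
Your proof is correct and follows essentially the same scheme as the paper's: the embedding $T(f)(x,\lambda)=f(x)-f(0)+\lambda f(0)$ is literally the paper's map $\Phi f(x,\lambda)=f(x)+(\lambda-1)f(0)$, and the isometry computation is identical. The one genuine difference is in the left inverse: the paper recovers $f(0)$ via the point evaluation $g(0,1)$, whereas you use the directional derivative $\partial_\lambda g(0,0)=dg(0,0)(0,1)$. Your choice is arguably cleaner, since $(0,1)$ has $\ell_1$-norm $1$ and thus lies on the boundary of the \emph{open} ball $B_{X\oplus_1\C}$; the paper's formula therefore implicitly relies on extending $g$ to the closed ball (which is harmless for a Lipschitz function, but is an extra step). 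Using the derivative at the origin stays strictly inside the open ball and makes the bound $\|\partial_\lambda g(0,0)\|\le L(g)$ immediate from Proposition~\ref{prop: HL0}.
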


\begin{proof}
	Consider $\Phi\colon \mathcal HL(B_X, Y)\to \mathcal HL_0(B_{X\oplus_1\mathbb C}, Y)$ given by $\Phi f(x,\lambda)=f(x)+(\lambda-1)f(0)$. It is easy to check that $\Phi f$ is Lipschitz with $L(\Phi f)\leq \norm{f}_{\mathcal HL}$ for every $f\in \mathcal HL(B_X, Y)$. Note that
	\[ L(\Phi f)\geq \sup\left\{\frac{\norm{\Phi f(x,0)-\Phi f(y,0)}}{\norm{x-y}}: x\neq y\in B_X\right\}=L(f)\]
	and also
	\[ L(\Phi f)\geq \frac{\norm{\Phi f(0,1)-\Phi f(0,0)}}{\norm{(0,1)-(0,0)}_1}=\norm{f(0)}, \]
	so we actually have $L(\Phi f)= \norm{f}_{\mathcal HL}$. Thus $\Phi$ is an into isometry.
	
	Now consider $T\colon \mathcal HL_0(B_{X\oplus_1\mathbb C}, Y)\to \mathcal HL(B_X, Y)$ given by $Tg(x)=g(x,0)+g(0,1)$. One can easily check that $\norm{T}\leq 1$ and $T\circ \Phi=I_{\mathcal HL(B_X,Y)}$. Therefore $P=\Phi \circ T$ is a norm-one projection from $\mathcal HL_0(B_{X\oplus_1\mathbb C}, Y)$ onto $\Phi(\mathcal HL(B_X, Y))$.
	
\end{proof}

Note that there are plenty of examples of non-Lipschitz functions in $\mathcal H^\infty (\mathbb D)$. For instance, given a sequence $(b_n)\subset \mathbb C\setminus\{1\}$ with $|b_n|=1$ and $b_n\to 1$, define $f\colon \{b_n\}\cup\{1\}\to\mathbb C$ by $f(1)=0$ and $f(b_n)=\sqrt{|b_n-1|}$. Then the Rudin-Carleson theorem provides an extension of $f$  which lies in the disc algebra $\mathcal A(\mathbb D)$ (that is, the space of uniformly continuous functions in $\mathcal H^\infty(\mathbb D)$) and has the same supremum norm, but it is not Lipschitz.

Our next goal is to show that $\mathcal HL_0(B_X)$ is indeed much smaller than both $\mathcal H^\infty(B_X)$ and $\Lip_0(B_X)$. More precisely, we will prove the following result, where we denote $\Hinf_0(B_X) =\{f\in\Hinf(B_X):\  f(0)=0\}$.

\begin{theorem} \label{th:embedl_infty} Let $X$ be a non-null complex Banach space.  Then
	\begin{enumerate}[(a)]
		\item $\ell_\infty$ is isomorphic to a subspace of $\mathcal H^\infty_0(B_X)\setminus \mathcal HL_0(B_X)\cup \{0\}$.
		\item $\ell_\infty$ is isomorphic to a subspace of $\Lip_0(B_X)\setminus \mathcal HL_0(B_X)\cup \{0\}$.
	\end{enumerate}
\end{theorem}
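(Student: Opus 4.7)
The plan is to build explicit $\ell_\infty$-copies in each case, exploiting that each container admits a feature that the other forbids: real-valued non-constant functions exist in $\Lip_0$ but not in $\mathcal H^\infty_0$ (for (b)), while functions with unbounded boundary derivatives exist in $\mathcal H^\infty_0$ but are excluded from $\Lip_0$ by Proposition~\ref{prop: HL0} (for (a)).

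For (b), I would use disjoint-support Lipschitz bumps: fix $v_0\in S_X$, set $y_n:=(1-2^{-n})v_0$ and $r_n:=2^{-n-3}$, so the open balls $B(y_n,r_n)$ are pairwise disjoint and contained in $B_X\setminus\{0\}$. The $1$-Lipschitz cones $f_n(x):=(r_n-\|x-y_n\|)^+$ then lie in $\Lip_0(B_X)$ with disjoint supports, and a case analysis comparing the gap $\|y_n-y_m\|\geq 2^{-\min(n,m)-1}$ with $r_n+r_m$ shows that $\Phi(a):=\sum_n a_n f_n$ is an isometric embedding $\Phi\colon\ell_\infty\to\Lip_0(B_X)$. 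For $a\neq 0$, pick $n$ with $a_n\neq 0$; on $B(y_n,r_n)$, $\Phi(a)$ equals $a_n f_n$, a nonzero scalar multiple of a real-valued non-constant function. Since any real-valued holomorphic function on a connected open subset of $B_X$ must be constant (restrict to complex lines and invoke Cauchy--Riemann), $\Phi(a)\notin\mathcal H^\infty_0(B_X)$, hence $\Phi(a)\notin\mathcal HL_0(B_X)$.

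For (a), I would first reduce to $X=\mathbb C$. Pick $x^*\in X^*$ with $\|x^*\|=1$; scaling norm-one vectors shows $x^*(B_X)=\mathbb D$. The composition $f\mapsto f\circ x^*$ isometrically embeds $\mathcal H^\infty_0(\mathbb D)$ into $\mathcal H^\infty_0(B_X)$, and the chain rule $d(f\circ x^*)(x)(h)=f'(x^*(x))x^*(h)$ combined with Proposition~\ref{prop: HL0} gives $L(f\circ x^*)=\sup_{\lambda\in\mathbb D}|f'(\lambda)|=L(f)$, so $f\in\mathcal HL_0(\mathbb D)$ iff $f\circ x^*\in\mathcal HL_0(B_X)$. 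It suffices to build $\ell_\infty\hookrightarrow\mathcal H^\infty_0(\mathbb D)\setminus\mathcal HL_0(\mathbb D)\cup\{0\}$. For this, apply Carleson's interpolation theorem to a paired interpolating sequence $(\lambda_n),(\mu_n)\subseteq\mathbb D$ with $\lambda_n,\mu_n\to 1$ and $\epsilon_n:=|\lambda_n-\mu_n|$ decaying extremely rapidly, obtaining a bounded linear $\sigma\colon\ell_\infty\to\mathcal H^\infty(\mathbb D)$ with $\sigma(a)(\lambda_n)=a_n$ and $\sigma(a)(\mu_n)=0$; setting $\tilde\sigma(a):=\sigma(a)-\sigma(a)(0)$ gives a bounded isomorphism onto its image in $\mathcal H^\infty_0(\mathbb D)$ (lower bound $\tfrac12\|a\|_\infty$ from the paired differences). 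If $\tilde\sigma(a)\in\mathcal HL_0(\mathbb D)$, then $|a_n|=|\sigma(a)(\lambda_n)-\sigma(a)(\mu_n)|\leq L\epsilon_n$, forcing $a$ into the separable subspace $Z:=\{a\in\ell_\infty:\sup_n|a_n|/\epsilon_n<\infty\}\subseteq c_0$.

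The main obstacle is then to extract a closed subspace $W\subseteq\ell_\infty$ isomorphic to $\ell_\infty$ with $W\cap Z=\{0\}$, for then $\tilde\sigma(W)$ is the required $\ell_\infty$-copy in $\mathcal H^\infty_0(\mathbb D)\setminus\mathcal HL_0(\mathbb D)\cup\{0\}$. Such $W$ exists: the short exact sequence $0\to c_0/Z\to\ell_\infty/Z\to\ell_\infty/c_0\to 0$ splits (since $\ell_\infty/c_0\cong C(\beta\mathbb N\setminus\mathbb N)$ is injective, being a $C(K)$-space for extremally disconnected $K$), so $\ell_\infty/Z$ contains $\ell_\infty/c_0$ as a direct summand; and $\ell_\infty/c_0$ in turn contains $\ell_\infty$ (via $\mathfrak{c}$-many pairwise unit-distance classes coming from an almost-disjoint family of infinite subsets of $\mathbb N$). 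Applying injectivity of $\ell_\infty$ once more, the induced short exact sequence $0\to Z\to q^{-1}(\ell_\infty\hookrightarrow\ell_\infty/Z)\to\ell_\infty\to 0$ splits, and the image of the resulting section is the desired $W$.
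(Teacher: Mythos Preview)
Your proof of $(b)$ is correct and takes a genuinely different route from the paper. You build disjoint real-valued Lipschitz cones and use that a holomorphic function whose range lies in a real line must be constant, whereas the paper multiplies functions in $\mathcal HL_0(\mathbb D)$ by a fixed radial $C^\infty$ bump $f(z)=l(|z|)$ to destroy holomorphy while controlling the Lipschitz norm via the maximum modulus theorem, and then transports to general $X$ by composing with a norm-one functional. Your construction is more elementary and works directly on $B_X$; the paper's has the mild advantage that the resulting copy of $\ell_\infty$ is canonically linked to $\mathcal HL_0(\mathbb D)\cong\mathcal H^\infty(\mathbb D)$.

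For $(a)$, the reduction to $\mathbb D$ via $f\mapsto f\circ x^*$ is exactly the paper's, and the interpolation idea is close in spirit. However, the extraction of the subspace $W$ has a genuine gap. You invoke injectivity of $\ell_\infty/c_0$ to split $0\to c_0/Z\to\ell_\infty/Z\to\ell_\infty/c_0\to 0$, and then injectivity of $\ell_\infty$ to split $0\to Z\to q^{-1}(\cdot)\to\ell_\infty\to 0$. Both steps fail for the same reason: injectivity of the \emph{quotient} object does not split a short exact sequence. Splitting requires either injectivity of the kernel (to extend $\mathrm{id}$ to a retraction) or projectivity of the quotient (to lift $\mathrm{id}$ to a section); $\ell_\infty$ is injective but not projective. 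Moreover, the premise itself is false: $\beta\mathbb N\setminus\mathbb N$ is an $F$-space but is \emph{not} extremally disconnected, so $\ell_\infty/c_0$ is not an injective Banach space.

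The fix is far simpler than the machinery you reach for, and is essentially the paper's move. Partition $\mathbb N=\bigsqcup_k A_k$ into infinite sets and let $W$ be the range of the isometric embedding $\Psi\colon\ell_\infty\to\ell_\infty$, $\Psi(b)|_{A_k}\equiv b_k$. Any $\Psi(b)\in Z\subset c_0$ is constant on each infinite $A_k$ and tends to $0$, hence $b=0$; so $W\cap Z=\{0\}$ and $\tilde\sigma(W)$ is the desired copy. The paper implements the same idea with alternating signs $x_c(n_{ik})=(-1)^i c_k$, which makes the failure of uniform continuity at the accumulation point $1$ immediate without even introducing the paired sequence $(\mu_n)$.
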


We will provide a different proof of Theorem \ref{th:embedl_infty}. $(a)$ in the Appendix. Indeed, there we build   an isomorphism into its image $F\colon \ell_\infty \longrightarrow \Hinf(B_X)$ such that, additionally,  its restriction to $c_0$ satisfies that $F|_{c_0}\colon c_0\longrightarrow \mathcal{A}(B_X)$.

\begin{proof}
	$(a)$ For the case $X=\mathbb C$, it has been observed to the authors that  a classical result is that  given $(z_j)$ an interpolating sequence on $\D$ there exists an topological isomorphism $S\colon\ell_\infty\longrightarrow \Hinf(\D)$  such that $S(c)(z_j)=c_j$ for every $j$ and every $c=(c_n)\in \ell_\infty$ (see e.g. \cite[Theorem VII.2.1 and applications, p.~285]{Garnett} where it is made for the upper half plane), we can also get that $S(c)(0)=0$. Hence, if  $\N$ is  partitioned  into infinitely many infinite sequences $n_{ik}$ with $i, k\in\mathbb N$ and for  $c\in\ell_\infty$ it is  defined $x_c\in\ell_\infty$, $x_c(n_{ik})=(-1)^i c_k$, and
	$Y=\{S(x_c): c\in\ell_\infty\}$, then $Y$ is an subspace of $\Hinf(\D)$ isomorphic to $\ell_\infty$.
	And if $c\neq 0$, then $c_k\neq 0$ for some $k$, and $S(x_c)$ takes values $\pm c_k$ along a sequence tending to 1. So it cannot be  uniformly continuous, hence it is not Lipschitz. 
	
	Now, to get the general case, 
	we fix $x_0\in S_X$ and consider $x^\ast\in X^\ast$ such that $x^\ast(x_0)=1=\|x^\ast\|$. We define
	\[
	\Psi\colon \Hinf(\D)\longrightarrow \Hinf(B_X)
	\]
	by $\Psi(f)=f\circ x^\ast$. Clearly $\Psi$ is a well-defined linear mapping and since $x^\ast(B_X)=\D$ we have that $\Psi$  is an isometry onto its image.
	Moreover, considering its restriction we are going to have
	\[
	\Psi\colon \mathcal HL(\D)\longrightarrow \mathcal HL(B_X),
	\]
	that again is an isometry, now with the Lipschitz norms. Indeed,
	if $f \in \mathcal HL(\D)$ then \[L(\Psi(f))=L(f\circ x^*)\leq L(f)L(x^*)=L(f).\] 
	But
	if $\lambda, \mu\in \D$, then
	\begin{align*}
		|f(\lambda)-f(\mu)|= & |f\circ x^\ast(\lambda x_0)-f\circ x^\ast(\mu x_0)|=|\Psi(f)(\lambda x_0)-\Psi(f)(\mu x_0)|  \\
		\leq &  L(\Psi(f))\|\lambda x_0-\mu x_0\|=L(\Psi(f))|\lambda-\mu|,
	\end{align*}
	and we get $L(f)\leq L(\Psi(f))$.
	Finally, due to the injectivity of $\Psi$ (a direct proof is also elementary) we have that
	\[
	\Psi (\Hinf_0(\D)\setminus \mathcal HL(\D))\subset \Hinf_0(B_X)\setminus \mathcal HL(B_X).
	\]
	Now the claim follows.

	$(b)$   First we consider the 1-dimensional case $X=\C$. Let $l\colon \R\to [0,1]$ be a $C^\infty$ function such that  $l$ is strictly increasing on $(1/2,1)$, $0<l(x)<1$  if $1/2<x<1$, $l(x)=0$ for  $x\le 1/2$, $l(x)=1$ for  $x\ge 1$, $l^{(k)}(1/2)=0$ if $k\ge 0$ and $l^{(k)}(1)=0$ if $k\ge 1$.
	Define $f\colon \C\to [0,1]$ as $f(z)=l(|z|)$. Considered it as being defined  on $\R^2$, $f$ is $C^\infty$ and $df\colon \R^2\to \R$ is a continuous function. Hence, by the Mean Value Theorem, $f\in \Lip_0(\D)$.
	Now we define $T\colon \mathcal HL_0(\D)\to \Lip_0(\D)$ as $T(g)=f\cdot g$. We claim that $T$ is an isomorphism onto its image. Indeed,
	given $g\in\mathcal  HL_0(\D)$ and $z,u\in \mathbb D$,
	\[
	|f(z)g(z)-f(u)g(u)|\leq |f(z)-f(u)||g(z)|+|f(u)||g(z)-g(u)|\leq  2 L(f)L(g)|z-u|.
	\]
	Thus $T$ is a continuous linear mapping with $\|T\|\leq 2L(f)$. Now we check that $T$ is bounded below. As $f(x,y)=l(\sqrt{x^2+y^2})$ we have that $df(x,y)=0$ if $z=x+iy$ satisfies  $|z|\geq 1$. By continuity on a compact set, given $\varepsilon>0$ there exists $0<r<1$ such that if $|z|\geq r$, then both $\|df(x,y)\|<\varepsilon$ and $f(z)>1-\varepsilon$. Thus,  for $g\in \mathcal HL_0(\D)$,
	\[ L(fg)=\|d(fg)\|_\D=\|gdf+fg'\|_\D\geq \|fg'\|_{\D\setminus r\D}-\|g\|_\D\|df\|_{\D\setminus r\D}\geq \|fg'\|_{\D\setminus r\D}-L(g)\varepsilon.
	\]
	But, by the maximum modulus theorem
	\[
	\|fg'\|_{\D\setminus r\D}\geq (1-\varepsilon)\|g'\|_{\D\setminus r\D}=(1-\varepsilon)\|g'\|_{\D}=(1-\varepsilon)L(g).
	\]
	and we get $L(fg)\geq (1-2\varepsilon)L(g)$, for every $\varepsilon>0$. As a consequence
	\[ L(Tg)=L(fg)\geq L(g),
	\]
	and $T$ is bounded below. Moreover, $T(g)=f\cdot g$ is never holomorphic on $\D$ for any $g\in \mathcal  HL_0(\D)\setminus \{0\}$, and
	$T(\mathcal HL_0)(\D)$ is isomorphic to $\mathcal HL_0(\D)$ which in turn is isometric to $\mathcal H^\infty(\D)$ that has a subspace isomorphic to $\ell_\infty$.
	
	The general case is a straightforward consequence of the above argument in the following natural way.  Let $X$ a non-null complex Banach space and take $x^\ast \in S_{X^\ast}$.
	Defining $R\colon\Lip_0(\D)\to \Lip_0(B_X)$ by $R(h)=h\circ x^\ast$, we are going to have that $R$ is an isometry into. Hence, $R\circ T\colon\mathcal  HL_0(\D)\to \Lip_0(B_X)$  is an isomorphism into its image and we get that $\ell_\infty$ is isomorphic to a subspace of
	$\mathcal HL_0(B_X)$.  But if $g \in\mathcal  HL_0(\D)\setminus \{0\}$, then $R\circ T(g)=(f\cdot g)x^\ast$ is not a Gateaux holomorphic function since its restriction to $\{z x\, :z\in \D\}$ is not holomorphic. We conclude that $\ell_\infty\setminus \{0\}\subset \Lip_0(B_X)\setminus \mathcal  HL_0(B_X)$.
\end{proof}

In this rest of the section, we will focus the attention on the canonical predual of space $\mathcal{H}L_0(B_X)$ and show that it shares many  properties with the canonical preduals of $\mathcal H^\infty(B_X)$ and $\Lip_0(B_X)$.

Let us denote by $\tau_0$ the compact-open topology on $\mathcal HL_0(B_X)$. An easy argument using Montel's theorem \cite[Th. 15.50]{Dirichlet} and Remark \ref{rem: differential} shows that $\overline{B}_{\mathcal HL_0(B_X)}$ is $\tau_0$-compact. In fact, on this ball, convergence in the topology $\tau_0$ coincides with pointwise convergence. Thus, the Dixmier-Ng theorem \cite{Ng} says that $\mathcal HL_0(B_X)$ is a dual space with predual given by
$$
\mathcal{G}_0(B_X) :=\{\varphi\in \mathcal HL_0(B_X)^*:\ \varphi|_{\overline B_{\mathcal HL_0(B_X)}} \textrm{ is } \tau_0-\textrm{continuous}\}.
$$
For $x\in B_X$ and $f\in \mathcal HL_0(B_X)$, denote $\delta(x)(f)=f(x)$. Clearly $\delta(x)\colon \mathcal HL_0(B_X)\to \mathbb C$ is linear and continuous meaning that $\delta(x)\in \mathcal HL_0(B_X)^*$. Also, $\delta(x)|_{\overline B_{\mathcal HL_0(B)}}$  is $\tau_0$-continuous so $\delta(x)\in \mathcal G_0(B_X)$.

\begin{proposition}\label{prop:diagram} Let $X$ be a complex Banach space.
	\begin{enumerate}[(a)]
		\item The mapping
		\begin{align*}
			\delta\colon B_X & \to \mathcal{G}_0(B_X) \\
			x & \mapsto \delta(x)
		\end{align*} is holomorphic and $\norm{\delta(x)-\delta(y)}=\norm{x-y}$ for every $x,y\in B_X$. In particular, $\delta\in \mathcal HL_0(B_X,\mathcal{G}_0(B_X)) $ with $L(\delta)=1$.
		\item $\mathcal G_0(B_X)= \overline{\vspan} \{\delta(x):\ x\in B_X\}$.
		\item For any complex Banach space $Y$ and any $f\in \mathcal HL_0(B_X,Y)$, there is a unique operator $T_f\in \mathcal L(\mathcal G_0(B_X), Y)$ such that the following diagram commutes:
		\begin{equation*}
			\xymatrix{
				B_X \ar[r]^f  \ar[d]_{\delta}    &  Y  \\
				\mathcal{G}_0(B_X)  \ar[ru]_{T_f}  &
			}
		\end{equation*}
		The map $f\mapsto T_f$ defines an isometric isomorphism from $\mathcal HL_0(B_X, Y)$ onto $\mathcal L(\mathcal G_0(B_X), Y)$. These properties characterize $\mathcal G_0(B_X)$ uniquely up to an isometric isomorphism.
		\item A bounded net $(f_\alpha)\subset \mathcal HL_0(B_X)$ is weak-star convergent to a function $f\in\mathcal HL_0(B_X)$ if and only if $f_\alpha(x)\to f(x)$ for every $x\in B_X$.
	\end{enumerate}
\end{proposition}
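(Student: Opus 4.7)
The plan is to establish the four parts in order, using the Dixmier--Ng identification $\mathcal G_0(B_X)^{*}=\mathcal HL_0(B_X)$ already recorded before the statement, together with Hahn--Banach and the standard weak-to-strong holomorphy principle for Banach-space-valued maps. For (a), I treat the isometric embedding and the holomorphy separately. The bound $\norm{\delta(x)-\delta(y)}\le \norm{x-y}$ is immediate from the definition of the dual norm combined with $|f(x)-f(y)|\le L(f)\norm{x-y}$ for every $f\in \mathcal HL_0(B_X)$. For the reverse inequality, given $x\ne y$ I invoke complex Hahn--Banach to pick $x^{*}\in S_{X^{*}}$ with $x^{*}(x-y)=\norm{x-y}$; since $x^{*}|_{B_X}\in \mathcal HL_0(B_X)$ has Lipschitz constant one, pairing recovers $\norm{x-y}$. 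For the holomorphy, observe that for each $f\in \mathcal HL_0(B_X)=\mathcal G_0(B_X)^{*}$ the scalar map $x\mapsto \langle f,\delta(x)\rangle=f(x)$ is holomorphic on $B_X$; combined with the local boundedness granted by the Lipschitz estimate just proved, the classical principle that a locally bounded, weakly holomorphic Banach-space-valued map is holomorphic then yields that $\delta\colon B_X\to \mathcal G_0(B_X)$ is itself holomorphic, so that $\delta\in \mathcal HL_0(B_X,\mathcal G_0(B_X))$ with $L(\delta)=1$.

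Part (b) is a direct Hahn--Banach argument: if the closed span $Z:=\overline{\vspan}\{\delta(x):x\in B_X\}$ were a proper subspace of $\mathcal G_0(B_X)$, there would exist a nonzero $\varphi\in \mathcal G_0(B_X)^{*}=\mathcal HL_0(B_X)$ vanishing on $Z$, forcing $\varphi(x)=0$ for every $x\in B_X$ and hence $\varphi=0$, a contradiction.

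For (c), when $Y=\mathbb C$ the statement is the tautology $T_f=f$ under $\mathcal HL_0(B_X)=\mathcal G_0(B_X)^{*}$. For a general complex Banach space $Y$, given $f\in \mathcal HL_0(B_X,Y)$ and $y^{*}\in Y^{*}$ the composition $y^{*}\circ f$ belongs to $\mathcal HL_0(B_X)$ with $L(y^{*}\circ f)\le \norm{y^{*}}L(f)$, so I can define
\[
\langle T_f\varphi, y^{*}\rangle := \langle y^{*}\circ f,\varphi\rangle, \qquad \varphi\in \mathcal G_0(B_X),
\]
obtaining an element $T_f\varphi\in Y^{**}$ with $\norm{T_f\varphi}_{Y^{**}}\le L(f)\norm{\varphi}$. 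On the total set from (b) one computes $T_f(\delta(x))=f(x)\in Y$, so continuity of $T_f$ together with the closedness of $Y$ in $Y^{**}$ forces $T_f(\mathcal G_0(B_X))\subset Y$. Uniqueness of $T_f$ comes from (b); the reverse estimate $L(f)\le \norm{T_f}$ is obtained by testing on differences $\delta(x)-\delta(y)$ via (a); and surjectivity of $f\mapsto T_f$ is witnessed by $T\mapsto T\circ \delta$, which lies in $\mathcal HL_0(B_X,Y)$ by (a). Uniqueness of $\mathcal G_0(B_X)$ up to isometric isomorphism is the standard categorical argument: any other pair $(Z,\delta')$ with the same universal property yields mutually inverse linearizations $T_{\delta}\colon Z\to \mathcal G_0(B_X)$ and $T_{\delta'}\colon \mathcal G_0(B_X)\to Z$.

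Finally, for (d), one direction is simply evaluating weak-star convergence at the functionals $\delta(x)\in \mathcal G_0(B_X)$. For the converse, a bounded net in the dual whose pairings converge on the total set $\{\delta(x):x\in B_X\}$ (total by (b)) must converge weak-star on all of $\mathcal G_0(B_X)$ by a routine $\varepsilon/3$ argument using the uniform norm bound. The main obstacle I foresee is the appeal to weak-to-strong holomorphy in (a); once that is in hand, the remaining items follow the same linearization template already exploited for $\mathcal F(M)$ and $\mathcal G^{\infty}(B_X)$, now adapted to $\mathcal HL_0(B_X)$.
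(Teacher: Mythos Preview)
Your proof is correct and follows essentially the same route as the paper for parts (a) and (b). In (c) you construct $T_f$ by dualizing through $Y^{**}$, whereas the paper first shows $\{\delta(x):x\in B_X\setminus\{0\}\}$ is linearly independent and defines $T_f$ directly on $\vspan\{\delta(x)\}$; and in (d) you use a direct density/$\varepsilon/3$ argument while the paper invokes that the $\tau_0$-compact ball carries a coarser weak-star topology, forcing the two to coincide---but these are minor implementation choices and the overall strategy is the same.
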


\begin{proof}
	$(a)$ The map $\delta$ is weakly holomorphic since for any $f\in \mathcal G_0(B_X)^*=\mathcal HL_0(B_X)$ we have that $f\circ \delta=f$ is holomorphic. Thus, $\delta$ is holomorphic (see \cite[ Th. 8.12]{MuLibro}). Also, given $x,y\in B_X$, we have
	\[ \norm{\delta(x)-\delta(y)}=\sup_{f\in B_{\mathcal HL_0(B_X)}}|\<f,\delta(x)-\delta(y)\>|=\sup_{f\in B_{\mathcal HL_0(B_X)}} |f(x)-f(y)|\leq \norm{x-y},\]
	and equality holds since we may take $f=x^*|_{B_X}$ where $\norm{x^*}=1$ and $x^*(x-y)=\norm{x-y}$.
	
	$(b)$ Just observe that for every $f\in \mathcal HL_0(B_X)=\mathcal G_0(B_X)^*$ we have that $f=0$ whenever $f|_{\{\delta(x): x\in B_X\}}=0$.

	$(c)$ First, note that an interpolation argument shows that the set $\{\delta(x):x\in B_X\setminus\{0\}\}$ is linearly independent in $\mathcal G_0(B_X)$. Indeed, assume that $\sum_{j=1}^n \lambda_j \delta(x_j)=0$ for different points $x_j\in B_X\setminus\{0\}$ and $\lambda_j\in \mathbb C$. Let $x_0=0$ and $\lambda_0=0$. Take $x_{ij}^*\in S_{X^*}$ with $x_{ij}^*(x_i-x_j)=\norm{x_i-x_j}$ and define $f(x)=\sum_{j=0}^n \overline{\lambda_j}\prod_{i\neq j} \frac{x_{ij}^*(x_i-x)}{\norm{x_i-x_j}}$. Then $f\in \mathcal HL_0(B_X)$ and  $0=\<f, \sum_{j=1}^n \lambda_j \delta(x_j)\>=\sum_{j=1}^n |\lambda_j|^2$.
	
	Now, given $f\in \mathcal HL_0(B_X,Y)$, we define $T_f(\delta(x)):=f(x)$ for every $x\in B_X$ (this is the only possibility to get a commutative diagram) and extend it linearly to $\vspan\{\delta(x):x\in B_X\}$. Note that, given $u=\sum_{j=1}^n \lambda_j \delta(x_j)$,
	\begin{align*} \norm{T_f u} &= \norm{\sum_{j=1}^n \lambda_j f(x_j)}=\sup_{y^*\in B_{Y^*}}\left|\sum_{j=1}^n \lambda_j (y^*\circ f)(x_j)\right| =\sup_{y^*\in B_{Y^*}} \left|\<u, y^*\circ f\>\right|\\
		&\leq \sup\{L(y^*\circ f): y^*\in B_Y\}\norm{u} = L(f)\norm{u}.
	\end{align*}
	Thus, $T_f$ extends uniquely to an operator $T_f\in \mathcal L(\mathcal G_0(B), Y)$ with $\norm{T_f}\leq L(f)$. Since $L(\delta)=1$ and $f=T_f\circ \delta$, indeed we get that $\norm{T_f}=L(f)$.
	
	Moreover, the map $f\mapsto T_f$ is onto since, given any $T\in L(\mathcal G_0(B_X), Y)$, we have that $f:= T\circ \delta$ is a holomorphic Lipschitz map with $f(0)=0$ and $T=T_f$.
	
	The uniqueness of $\mathcal G_0(B_X)$ follows from the diagram property and the fact that $\norm{T_f}=L(f)$.
	
	$(d)$ The ball $\overline{B}_{\mathcal HL_0(B_X)}$ is $\tau_0$-compact and the  weak-star topology is coarser than $\tau_0$, so they coincide on $\overline{B}_{\mathcal HL_0(B_X)}$.
\end{proof}

\begin{proposition} \label{prop: X 1-complemented}
	For every complex Banach space $X$ we have that $X$ is isometric to a 1-complemented subspace of $\mathcal{G}_0(B_X)$.
\end{proposition}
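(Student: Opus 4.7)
The plan is to establish this by constructing a norm-one linear retraction $R\colon \mathcal{G}_0(B_X) \twoheadrightarrow X$ together with a linear section $S\colon X \to \mathcal{G}_0(B_X)$, that is, $R \circ S = \mathrm{id}_X$, with $\norm{S} \leq 1$. Once both are in place, the chain $\norm{x} = \norm{R(S(x))} \leq \norm{S(x)} \leq \norm{x}$ forces $S$ to be an isometry into $\mathcal{G}_0(B_X)$, and $P := S \circ R$ is automatically a norm-one projection of $\mathcal{G}_0(B_X)$ onto $S(X)$, realizing $X$ as the desired $1$-complemented subspace.

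For the retraction, the natural candidate is $R := T_{\iota}$, the linearization of the inclusion $\iota\colon B_X \hookrightarrow X$. Since $\iota \in \mathcal{HL}_0(B_X, X)$ with $L(\iota) = 1$, Proposition~\ref{prop:diagram}(c) supplies directly $R \in \mathcal{L}(\mathcal{G}_0(B_X), X)$ with $\norm{R} = 1$ and $R(\delta(x)) = x$ for every $x \in B_X$.

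For the section, the evaluation $\delta$ itself is not linear, so I would instead extract the differential at the origin: for $x \in X$, define $S(x) \in \mathcal{HL}_0(B_X)^{*}$ by $\langle f, S(x)\rangle := df(0)(x)$. Linearity of $x \mapsto S(x)$ is immediate, and Proposition~\ref{prop: HL0} gives $\abs{df(0)(x)} \leq \norm{df(0)}\,\norm{x} \leq L(f)\norm{x}$, so $\norm{S(x)} \leq \norm{x}$. The main step, and the only place with real content, is verifying that $S(x) \in \mathcal{G}_0(B_X)$, i.e., that $f \mapsto df(0)(x)$ is $\tau_0$-continuous on $\overline{B}_{\mathcal{HL}_0(B_X)}$. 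For this I would pick $r>0$ with $r\norm{x}<1$ and apply the scalar Cauchy formula to $\lambda \mapsto f(\lambda x)$ to obtain
\[
df(0)(x) \;=\; \frac{1}{2\pi r}\int_{0}^{2\pi} e^{-i\theta}\, f(re^{i\theta}x)\,d\theta,
\]
exhibiting $S(x)$ as an average of point evaluations $\delta(re^{i\theta}x)$ along the compact curve $\{re^{i\theta}x : \theta \in [0,2\pi]\}\subset B_X$; this is manifestly continuous in the compact-open topology.

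To conclude, I would check $R \circ S = \mathrm{id}_X$ by dualizing: for $x^{*} \in X^{*}$ the function $x^{*} \circ \iota = x^{*}|_{B_X}$ is linear, hence $\langle x^{*}, R(S(x))\rangle = \langle x^{*}|_{B_X}, S(x)\rangle = d(x^{*}|_{B_X})(0)(x) = x^{*}(x)$. The only genuine obstacle in this plan is establishing the $\tau_0$-continuity of the derivative-at-zero functional, which the Cauchy integral resolves in one line; everything else is a formal consequence of the universal linearization property of $\mathcal{G}_0(B_X)$ recorded in Proposition~\ref{prop:diagram}(c).
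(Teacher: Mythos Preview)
Your proof is correct and follows essentially the same route as the paper: your $R$ is the paper's $T_{Id}$, and your $S(x)$ is exactly the paper's $d\delta(0)(x)$, since the paper computes $\langle f, d\delta(0)(x)\rangle = df(0)(x)$. The only cosmetic difference is that the paper gets $S(x)\in\mathcal{G}_0(B_X)$ for free from the holomorphicity of $\delta$ established in Proposition~\ref{prop:diagram}(a) (so $d\delta(0)$ automatically lands in $\mathcal{G}_0(B_X)$), whereas you verify this membership directly via the Cauchy integral---which is precisely the argument the paper uses elsewhere to show $e_{x,y}\in\mathcal{G}_0(B_X)$.
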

\begin{proof}
	In the particular case of $f=Id\colon B_X\to X$, differentiating the diagram in Proposition~\ref{prop:diagram} and using that $d(Id) (x)=Id$ for all $x\in B_X$, we obtain another commutative diagram where all the arrows are linear:
	\begin{equation*}
		\xymatrix{
			X \ar[r]^{Id}  \ar[d]_{d\delta (0)}    &  X  \\
			\mathcal{G}_0(B_X)  \ar[ru]_{T_{Id}}  &
		}
	\end{equation*}
	Moreover, $d\delta(0)$ is an isometry. Indeed, given $x\in X$ and $f\in \mathcal HL_0(B_X)$ we have
	\[\<f, d\delta(0)(x)\> = \lim_{t\to 0} \< f, \frac{\delta(tx)-\delta(0)}{t}\> =\lim_{t\to 0} \frac{f(tx)-f(0)}{t} = df(0)(x)\]
	and so
	\[\norm{d\delta(0)(x)}=\sup\{|df(0)(x)|: f\in B_{\mathcal HL_0(B_X)}\}\leq \norm{x}.\]
	The other inequality is clear due to the commutative diagram:
	$$
	\|x\|=\|T_{Id}\circ d\delta(0) (x)\|\le \|d\delta(0) (x)\|.
	$$

	Finally, let $P=d\delta(0)\circ T_{Id}$. Then, using that $T_{Id}\circ d\delta(0)=Id$, we have
	\[ P^2= d\delta(0)\circ T_{Id}\circ d\delta(0)\circ T_{Id}= d\delta(0)\circ T_{Id}= P,\]
	so $P$ is a norm-one projection from  $\mathcal{G}_0(B_X)$ onto $d\delta(0)(X)$.
\end{proof}

Note that this result also holds for $\mathcal{G}^\infty(B_X)$ \cite{Mu} but not in general for $\F(B_X)$. In \cite{GK} it is proved that this is true for $X$ separable  although for nonseparable $X$ it could even occur that $\F(B_X)$ does not contain a subspace isomorphic to $X$. Another useful property of Lipschitz-free spaces is the fact that they contain a complemented copy of $\ell_1$ \cite{CDW}, the same holds for $\mathcal G_0(B_X)$.

\begin{proposition} Let $X$ be a complex Banach space. Then there is a complemented subspace of $\mathcal G_0(B_X)$ isomorphic to $\ell_1$.
\end{proposition}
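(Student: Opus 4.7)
The plan is to first reduce the statement to the one-dimensional case $X=\mathbb{C}$ (where $B_X=\mathbb{D}$), and then produce a complemented copy of $\ell_1$ in $\mathcal G_0(\mathbb{D})$ by combining the isometric identification $\mathcal HL_0(\mathbb{D})\cong \mathcal H^\infty(\mathbb{D})$ via differentiation (observed earlier in this section) with a suitable interpolating sequence.

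For the reduction, I would fix $x_0\in S_X$ and $x^\ast\in S_{X^\ast}$ with $x^\ast(x_0)=1$; the contractive holomorphic maps $j\colon \mathbb{D}\to B_X$, $j(\lambda)=\lambda x_0$, and $\pi\colon B_X\to \mathbb{D}$, $\pi(x)=x^\ast(x)$, satisfy $\pi\circ j=\mathrm{id}_{\mathbb{D}}$. By Proposition~\ref{prop:diagram}(c), the compositions $\delta_{B_X}\circ j$ and $\delta_{\mathbb{D}}\circ \pi$ linearize to operators $\widehat j\colon \mathcal G_0(\mathbb{D})\to \mathcal G_0(B_X)$ and $\widehat\pi\colon \mathcal G_0(B_X)\to \mathcal G_0(\mathbb{D})$ of norm one. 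Checking on the evaluation functionals (which span a dense subspace by Proposition~\ref{prop:diagram}(b)) yields $\widehat\pi\circ\widehat j=\mathrm{id}_{\mathcal G_0(\mathbb{D})}$, so $\widehat j$ is an isometric embedding onto a subspace that is $1$-complemented by $\widehat j\circ\widehat\pi$. It thus suffices to exhibit a complemented $\ell_1$ in $\mathcal G_0(\mathbb{D})$.

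For this, I would take an interpolating sequence $(z_n)\subset \mathbb{D}$ admitting a \emph{linear} interpolation operator $I\colon \ell_\infty \to \mathcal H^\infty(\mathbb{D})$ of the form $I(a)=\sum_n a_n\phi_n$, with $\phi_n(z_m)=\delta_{nm}$ and $\sup_{z\in\mathbb{D}}\sum_n|\phi_n(z)|<\infty$; this is a classical fact in the theory of bounded analytic functions (see, e.g., Garnett's treatise). Composing with the primitive along $[0,z]$ yields $\tilde I\colon \ell_\infty\to \mathcal HL_0(\mathbb{D})$, $\tilde I(a)(z):=\int_0^z I(a)(w)\,dw$, which is bounded with $L(\tilde I(a))=\|I(a)\|_\infty$ and $\tilde I(a)'(z_n)=a_n$. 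The pointwise $\ell_1$-summability of $(\phi_n(z))_n$ enables a dominated-convergence argument showing $\tilde I$ is weak$^\ast$-to-weak$^\ast$ continuous, so by Proposition~\ref{prop:diagram}(d) it is the adjoint of some bounded $R\colon \mathcal G_0(\mathbb{D})\to \ell_1$. On the predual side, the derivative-evaluation functionals $\Lambda_n(f):=f'(z_n)$ are bounded on $\mathcal HL_0(\mathbb{D})$ with $\|\Lambda_n\|\leq 1$, and they lie in $\mathcal G_0(\mathbb{D})$ by Vitali's theorem, which turns bounded pointwise convergence of holomorphic functions into locally uniform convergence of their derivatives. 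The identity $\langle \tilde I(a),\Lambda_n\rangle=a_n$ forces $R(\Lambda_n)=e_n$, so the bounded operator $J\colon \ell_1\to \mathcal G_0(\mathbb{D})$ defined by $J(e_n):=\Lambda_n$ satisfies $RJ=\mathrm{id}_{\ell_1}$; consequently $J$ is an isomorphism onto its image and $JR$ is the desired bounded projection of $\mathcal G_0(\mathbb{D})$ onto a copy of $\ell_1$.

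I expect the main obstacle to be the weak$^\ast$-continuity of $\tilde I$, which rests on the pointwise $\ell_1$-summability of the coefficient functions $\phi_n$. This is a standard but non-trivial complex-analytic ingredient; once it is in place, the rest of the argument reduces to a routine interplay between linearization, integration, and adjoints.
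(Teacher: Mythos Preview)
Your argument is correct but takes a different route from the paper's. The paper's proof is a three-line black-box application: since $\ell_\infty$ embeds in $\mathcal H^\infty(\mathbb D)\cong \mathcal HL_0(\mathbb D)$, which in turn embeds in $\mathcal HL_0(B_X)=\mathcal G_0(B_X)^*$, the classical Bessaga--Pe\l czy\'nski theorem (a dual containing $c_0$ forces a complemented $\ell_1$ in the predual) finishes immediately. You instead give an explicit construction: after the same complementation reducing to $\mathcal G_0(\mathbb D)$, you identify the complemented $\ell_1$ concretely as the closed span of the derivative-evaluation functionals $\Lambda_n=e_{z_n,1}$ at an interpolating sequence, with the projection $JR$ arising as the pre-adjoint of the P.~Beurling linear interpolation operator composed with integration. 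Your weak$^*$-continuity step is fine (dominated convergence plus Proposition~\ref{prop:diagram}(d) gives sequential weak$^*$ continuity on bounded sets, and since both preduals are separable, Krein--\v{S}mulian upgrades this to an adjoint), though you might make that passage explicit. The paper's proof is shorter and avoids the complex-analytic machinery of interpolating sequences with $\ell_1$-summable Beurling functions; yours buys an explicit basis and projection, which could be useful if one later wants quantitative control on the complementation constant or a concrete description of the copy of $\ell_1$.
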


\begin{proof}
	$\ell_\infty$ is isomorphic to a subspace of $\mathcal H^\infty(\mathbb D)$. Since $\mathcal H^\infty(\mathbb D)$ is isometric to $\mathcal HL_0(\mathbb D)$, which is a complemented subspace in $\mathcal HL_0(B_X)$, the same holds for $\mathcal HL_0(B_X)$. It is a classical result (see \cite[Th. 4]{BP}) that this implies its predual $\mathcal G_0(B_X)$ contains a complemented copy of $\ell_1$.
\end{proof}

Next, we want to describe the closed unit ball of $\mathcal{G}_0(B_X)$. For that, we introduce some more notation.
We denote by $\conv$ the convex hull of a set and by $\Gamma$  the absolute convex hull of a set.
As usual in the Lipschitz world, for every $x,y\in B_X$ with $x\neq y$,   $m_{x,y}$ stands for the \emph{elementary molecule} $\frac{\delta(x)-\delta(y)}{\|x-y\|}$.   Also, for every $x\in B_X$, $y\in X$ and $f\in \mathcal HL_0(B_X)$, we denote  $e_{x,y}(f):=df(x)(y)$. Then $e_{x,y}\in \mathcal G_0(B_X)$ with   $\norm{e_{x,y}}=\norm{y}$. Indeed, it is clear that
\[ \norm{e_{x,y}}=\sup\{|df(x)(y)|: f\in B_{\mathcal HL_0(B_X)}\}\leq \sup\{\norm{df(x)}: f\in B_{\mathcal HL_0(B_X)}\}\norm{y}\leq \norm{y}.\]
Conversely, take $x^*\in X^*$ with $x^*(y)=\norm{y}$ and $\norm{x^*}=1$. Then $x^*|_{B_X}\in \mathcal HL_0(B_X)$ and $e_{x,y}(x^*|_{B_X})=x^*(y)=\norm{y}$. This shows that $e_{x,y}$ belongs to $\mathcal HL_0(B_X)^*$ and the equality of norms. Finally, by a simple application of a Cauchy's integral formula we derive that the restriction of $e_{x,y}$ to $\overline B_{\mathcal HL_0(B_X)}$ is $\tau_0$-continuous and so it belongs to $ \mathcal G_0(B_X)$.

\begin{proposition} \label{prop:capsula convexa}
	Let $X$ be a complex Banach space. Then,
	$$\overline{B}_{\mathcal{G}_0(B_X)} = \overline\Gamma \{m_{x,y}:\ x,y\in B_X,\ x\neq y \} = \overline\conv \{e_{x,y}:\ x\in B_X, y\in S_X \}$$
\end{proposition}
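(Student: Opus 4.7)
The plan is to establish both equalities at once by showing that each of the two sets on the right equals $\overline B_{\mathcal G_0(B_X)}$. This turns the statement into a bipolar-type computation using the duality $\mathcal G_0(B_X)^\ast=\mathcal HL_0(B_X)$ provided by Proposition~\ref{prop:diagram} together with the Hahn-Banach theorem.

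For the easy direction $(\supseteq)$, the estimate $|\<f,m_{x,y}\>|=|f(x)-f(y)|/\|x-y\|\leq L(f)$ gives $\|m_{x,y}\|\leq 1$ for all distinct $x,y\in B_X$, while the computation preceding the statement already gives $\|e_{x,y}\|=\|y\|=1$ when $y\in S_X$. Since $\overline B_{\mathcal G_0(B_X)}$ is closed and absolutely convex, it contains both $\overline\Gamma\{m_{x,y}\}$ and $\overline{\conv}\{e_{x,y}\}$.

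For the reverse direction $(\subseteq)$, I would fix $u\in\overline B_{\mathcal G_0(B_X)}$ and assume, toward a contradiction, that $u$ fails to belong to one of the two hulls. A standard Hahn-Banach separation argument then produces $f\in\mathcal HL_0(B_X)=\mathcal G_0(B_X)^\ast$ satisfying either
\[ |\<f,m_{x,y}\>|\leq 1 \text{ for all }x\neq y\in B_X, \quad \text{or} \quad \Re\<f,e_{x,y}\>\leq 1 \text{ for all }x\in B_X,\, y\in S_X, \]
while $|\<f,u\>|>1$. The goal is then to show that each of these two conditions forces $L(f)=\|f\|\leq 1$, which contradicts $\|u\|\leq 1$.

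The first case is essentially definitional: $\sup_{x\neq y}|f(x)-f(y)|/\|x-y\|=L(f)$. The second case is the only nontrivial step and is where I expect the main obstacle to lie: one must compute the support functional of the family $\{e_{x,y}:x\in B_X,\,y\in S_X\}$ at $f$. For fixed $x$, since $S_X$ is stable under multiplication by unit complex scalars, one obtains $\sup_{y\in S_X}\Re df(x)(y)=\sup_{y\in S_X}|df(x)(y)|=\|df(x)\|$; then Proposition~\ref{prop: HL0} is exactly what is needed to convert $\sup_{x\in B_X}\|df(x)\|=\|df\|_\infty$ into $L(f)$. Thus the description via $e_{x,y}$ genuinely rests on the identity $L(f)=\|df\|_\infty$, and without this identification the $e_{x,y}$-representation of the ball would not come out.
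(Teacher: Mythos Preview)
Your proposal is correct and follows essentially the same route as the paper: both arguments rest on the fact that each of the two families is $1$-norming for $\mathcal HL_0(B_X)$, and the paper simply invokes the standard equivalence ``$1$-norming $\Leftrightarrow$ closed absolutely convex hull equals the unit ball'' where you spell out the Hahn--Banach separation explicitly. The only cosmetic difference is in the $e_{x,y}$ equality: the paper first obtains $\overline\Gamma\{e_{x,y}\}$ and then observes that linearity of $y\mapsto e_{x,y}$ collapses $\overline\Gamma$ to $\overline{\conv}$, whereas you work directly with $\overline{\conv}$ and use rotation invariance of $S_X$ to upgrade $\sup_{y}\Re df(x)(y)$ to $\norm{df(x)}$; these are the same observation in dual form.
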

\begin{proof}
	By Proposition~\ref{prop:diagram}, we have that $\norm{m_{x,y}}=1$ for every $x,y\in B_X$ with $x\neq y$. Also,
	\[L(f)=\sup\{|\<f, m_{x,y}\>|: x,y\in B_X, x\neq y\} \, \text{for all } f\in \mathcal HL_0(B_X).\]
	Thus, $\{m_{x,y}:x,y\in B_X, x\neq y\}$ is $1$-norming for $\mathcal HL_0(B_X)$. Equivalently, $\overline{B}_{\mathcal{G}_0(B_X)} = \overline\Gamma \{m_{x,y}:\ x,y\in B_X,\ x\neq y \}$. Analogously, we have that
	\[ L(f)=\norm{df}=\sup\{\norm{df(x)}:x\in B_X\}=\sup\{|\<f, e_{x,y}\>| : x\in B_X, y\in S_X\}\]
	and so $\overline{B}_{\mathcal{G}_0(B_X)} = \overline{\Gamma\{e_{x,y}:x\in B_X, y\in S_X\}}$. But $e_{x,\lambda y_1+\eta y_2}= \lambda e_{x,y_1}+\eta e_{x,y_2}$ for every $\lambda, \eta\in \mathbb C$ so actually $\overline{B}_{\mathcal{G}_0(B_X)} = \overline{\conv}\{e_{x,y}:x\in B_X, y\in S_X\}$.
\end{proof}

As a consequence, the density characters of $X$ and $\mathcal G_0(B_X)$ coincide. In particular $X$ is separable if and only if $\mathcal{G}_0(B_X)$ is separable.

We will now relate $\mathcal G_0(B_X)$ with the Lipschitz-free space $\mathcal F(B_X)$ and Mujica's predual $\mathcal G^\infty(B_X)$ of $\mathcal H^\infty(B_X)$. Note that each element of $\mathcal F(B_X)$ can be seen also as an element of $\mathcal G_0(B_X)$, but maybe with a different behavior. For instance, consider  $z\in B_X\setminus \{0\}$ and $\mu$ given by $\<\mu,f\>=\int_{C(0,1)} f(\lambda z)d\lambda$ for $f\in \Lip_0(B_X)$.  Then $\mu\neq 0$ in $\mathcal F(B_X)$ but  $\<\mu,f\>=0$ for all $f\in \mathcal HL_0(B_X)$, so $\mu=0$ when considered as an element of $\mathcal G_0(B_X)$. The next proposition formalizes  this situation. We say that an operator $T\colon X\to Y$ is a \emph{quotient operator} if $T$ is surjective and $\norm{y}= \inf\{\norm{x}:Tx=y\}$ for every $y\in Y$; this implies that $X/\ker T$ is isometrically isomorphic to $Y$.

\begin{proposition} \label{prop: quotient}
	Let $X$ be a complex Banach space.
	\begin{enumerate}[(a)]
		\item The operator
		\begin{align*}\pi\colon \mathcal F(B_X)&\to \mathcal{G}_0(B_X)\\
			\delta(x)&\mapsto \delta(x)
		\end{align*}
		is a quotient operator with kernel $\mathcal HL_0(B_X)_\bot=\{\mu\in \mathcal F(B_X): \<f, \mu\>=0 \,\forall f\in \mathcal HL_0(B_X)\}$. Thus $\mathcal G_0(B_X)\equiv \mathcal F(B_X)/\mathcal HL_0(B_X)_\bot$ isometrically.
		
		\item The operator
		\begin{align*}\Psi\colon \mathcal{G}^\infty(B_X)\pten X&\to \mathcal{G}_0(B_X)\\
			\delta(x)\otimes y&\mapsto e_{x,y}
		\end{align*}
		is a quotient map with $\norm{\Psi}=1$. In addition, the operator $\Psi$ is injective if  and only if $X= \mathbb C$.
	\end{enumerate}
\end{proposition}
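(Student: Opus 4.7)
For (a), I would construct $\pi$ via the universal property of $\mathcal F(B_X)$ and then derive the quotient property by dualizing. The map $\delta\colon B_X\to \mathcal G_0(B_X)$ is Lipschitz of constant one by Proposition~\ref{prop:diagram}(a), so it lifts to a linear contraction $\pi$ sending each generator $\delta_{\mathcal F(B_X)}(x)$ to $\delta(x)\in \mathcal G_0(B_X)$. The adjoint identifies with the natural inclusion $\mathcal HL_0(B_X)\hookrightarrow \Lip_0(B_X)$: checking on molecules, $\langle \pi^* f, m_{x,y}\rangle = (f(x)-f(y))/\|x-y\|$. Because the Lipschitz constant of $f$ is intrinsic and does not depend on whether $f$ is regarded as an element of $\mathcal HL_0(B_X)$ or of $\Lip_0(B_X)$, $\pi^*$ is an isometric embedding, which is exactly the dual characterization of a quotient operator. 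Its kernel is the pre-annihilator of the range of $\pi^*$, namely $\mathcal HL_0(B_X)_\bot$, and the isometric quotient identification follows.

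For (b) I would apply the same dualization strategy. Under the canonical identifications $(\mathcal G^\infty(B_X)\pten X)^* \equiv \mathcal L(\mathcal G^\infty(B_X), X^*) \equiv \mathcal H^\infty(B_X, X^*)$, a direct computation gives $\langle \Psi^* f, \delta(x)\otimes y\rangle = \langle f, e_{x,y}\rangle = df(x)(y)$, so $\Psi^* f$ corresponds to $df$. By Proposition~\ref{prop: HL0}, $L(f)=\|df\|_\infty$, hence $\Psi^*$ is isometric. Therefore $\Psi$ is a quotient operator with $\|\Psi\|=1$.

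For the injectivity statement, observe that $\Psi$ is injective if and only if the range of $\Psi^*$, i.e.\ $\{df: f\in \mathcal HL_0(B_X)\}$, separates points of $\mathcal G^\infty(B_X)\pten X$. When $X=\mathbb C$, this range equals all of $\mathcal H^\infty(\mathbb D)$ (every bounded holomorphic function on $\mathbb D$ admits a Lipschitz primitive, as noted right after Proposition~\ref{prop: HL0}), so $\Psi^*$ is a surjective isometry and $\Psi$ is an isometric isomorphism. For $\dim_{\mathbb C} X\ge 2$ I will construct a nonzero $u\in \ker \Psi$ as follows. Let $\tilde e_{x,y}\in \mathcal G^\infty(B_X)$ denote the functional $g\mapsto dg(x)(y)$; this lies in the predual since it is the Fr\'echet derivative at $x$ of the holomorphic map $\delta\colon B_X\to \mathcal G^\infty(B_X)$ (equivalently, it is $\tau_0$-continuous on the unit ball of $\mathcal H^\infty(B_X)$ by Cauchy's formula). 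Fix $x\in B_X$ and linearly independent $y,z\in X$, and set $u = \tilde e_{x,y}\otimes z - \tilde e_{x,z}\otimes y$. Pairing with $\Psi^* f = df$ yields $\langle df, u\rangle = d^2 f(x)(y,z)-d^2f(x)(z,y)$, which vanishes by symmetry of the second Fr\'echet derivative of a holomorphic function; hence $\Psi(u)=0$. To see $u\neq 0$, I test against $g\in \mathcal H^\infty(B_X, X^*)$ of the form $g(w)=x_1^*(w)\, x_2^*$ with $x_1^*,x_2^*\in X^*$ linearly independent (available when $\dim X\ge 2$); choosing $y,z$ so that $x_1^*(y)x_2^*(z)\neq x_1^*(z)x_2^*(y)$ gives $\langle g, u\rangle\neq 0$.

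The principal obstacle is the bookkeeping around $\tilde e_{x,y}$: one must confirm it lies in $\mathcal G^\infty(B_X)$ rather than merely in $\mathcal H^\infty(B_X)^*$, and justify the identity $\langle \Psi^* f, \tilde e_{x,y}\otimes z\rangle = d^2 f(x)(y,z)$. Both points reduce to holomorphy of $\delta\colon B_X\to \mathcal G^\infty(B_X)$ together with continuity of $\Psi$, which legitimizes passing the limit $\tilde e_{x,y}=\lim_{t\to 0} t^{-1}(\delta(x+ty)-\delta(x))$ through $\Psi$ and through the pairing with $df$; once these are secured, the remaining manipulations are routine applications of symmetry of second derivatives and the abundance of non-symmetric bilinear forms in dimension at least two.
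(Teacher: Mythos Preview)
Your proof is correct and follows essentially the same strategy as the paper: in both parts you identify the adjoint of the candidate operator (with the inclusion $\mathcal HL_0(B_X)\hookrightarrow \Lip_0(B_X)$ in (a), with the differential map $\Phi\colon f\mapsto df$ into $\mathcal H^\infty(B_X,X^*)$ in (b)) and deduce the quotient property from the adjoint being an into isometry. The one substantive difference is the non-injectivity argument for $\dim_{\mathbb C}X\ge 2$: the paper argues abstractly that $\Psi$ is injective iff $\Phi=\Psi^*$ is surjective (legitimate because $\Psi$ is onto, so by the closed range theorem $\Phi$ has weak-star closed range), and then invokes the earlier observation that non-symmetric elements of $\mathcal L(X,X^*)$ are missed by $\Phi$; you instead exhibit an explicit kernel element $u=\tilde e_{x,y}\otimes z-\tilde e_{x,z}\otimes y$ that pairs to $d^2f(x)(y,z)-d^2f(x)(z,y)=0$ against every $df$. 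Both routes rest on the same symmetry obstruction; yours is more hands-on and avoids the closed range theorem, while the paper's is shorter once the non-surjectivity of $\Phi$ has already been recorded.
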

\begin{proof}
	$(a)$ First note that the existence of such an operator $\pi$ follows from the linearization property of Lipschitz-free spaces applied to the 1-Lipschitz map $B_X\to \mathcal G_0(B_X)$ given by $x\mapsto \delta(x)$. Also, $\pi^*\colon \mathcal HL_0(B_X) \to \Lip_0(B_X)$ is just the inclusion map since
	\[ \pi^*f(x)=\<\pi^*f, \delta(x)\>=\<f, \pi(\delta(x))\>=\<f,\delta(x)\>=f(x) \quad \forall f\in \mathcal HL_0(B_X),\forall x\in B_X.\]
	Thus, $\pi^*$ is an isometry into. It is a standard fact that this implies that $\pi$ is a quotient operator. Moreover, $\ker\pi = \pi^*(\mathcal HL_0(B_X))_\bot= \mathcal HL_0(B_X)_\bot$.

	$(b)$ Consider the into isometry
	\begin{align*}
		\Phi\colon \mathcal HL_0(B_X) & \to \Hinf (B_X,X^*)\\
		f & \mapsto df
	\end{align*}
	defined after Proposition \ref{prop: HL0}. Recall that $\mathcal{G}^\infty(B_X)\pten X$ is a predual of $\mathcal L(\mathcal{G}^\infty(B_X), X^*)\simeq \Hinf (B_X,X^*)$ (see e.g. \cite{Ryan}). Thus, if we restrict $\Phi^*$ to this predual we obtain $\Psi=\Phi^*|_{\mathcal{G}^\infty(B_X)\pten X}$, note that $\Psi(\delta(x)\otimes y)=e_{x,y}\in \mathcal G_0(B_X)$ for all $x,y$ and so $\Psi(\mathcal G^\infty(B_X)\pten X)\subset \mathcal G_0(B_X)$. Then $\norm{\Psi}=1$ and $\Psi$ is a quotient operator since $\Psi^*=\Phi$ is an into isometry. In the case $X=\mathbb C$, we have indeed that $\Phi\colon \mathcal HL_0(\mathbb D)\to\mathcal H^\infty (\mathbb D)$ is an onto isometry, and thus $\Psi$ is also an isometry from $\mathcal G^\infty (\mathbb D)$ onto $\mathcal G_0(\mathbb D)$. However, $\Psi$ is not injective for $X\neq \mathbb C$ since $\Phi$ is not surjective. 
\end{proof}

Thus, $\mathcal G_0(\mathbb D)$ is isometric to $\mathcal G^\infty (\mathbb D)$ (which is the unique predual of $\mathcal H^\infty(\mathbb D)$ \cite{Ando}). We have some immediate consequences.

\begin{corollary} A function $f$ is an extreme point of  $\overline{B}_{\mathcal HL_0(\mathbb D)}$ if and only if $f'$ is an extreme point of $\overline{B}_{\mathcal H^\infty(\mathbb D)}$. \end{corollary}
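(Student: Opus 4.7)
The statement is essentially a direct corollary of Proposition~\ref{prop: quotient}(b), so the plan is very short. The map
\[
\Phi\colon \mathcal{H}L_0(\mathbb D)\to \mathcal H^\infty(\mathbb D),\qquad f\mapsto df=f',
\]
was shown in the proof of Proposition~\ref{prop: HL0} to be an isometry (with $L(f)=\|f'\|_\infty$ in dimension one), and in the proof of Proposition~\ref{prop: quotient}(b) to be onto, since in the one-dimensional case every $g\in \mathcal H^\infty(\mathbb D)$ has a primitive $f$ with $f(0)=0$, and $f$ is automatically Lipschitz because $\|f'\|_\infty<\infty$. Thus $\Phi$ is an isometric linear isomorphism between the two Banach spaces.

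From here the plan is to invoke the standard fact that any surjective linear isometry between Banach spaces restricts to a bijection between the closed unit balls which sends extreme points to extreme points and nonextreme points to nonextreme points. Indeed, if $\Phi(f)=\tfrac12(g_1+g_2)$ with $g_i\in \overline B_{\mathcal H^\infty(\mathbb D)}$, then $f=\tfrac12(\Phi^{-1}g_1+\Phi^{-1}g_2)$ with $\Phi^{-1}g_i\in \overline B_{\mathcal{H}L_0(\mathbb D)}$, and conversely. Hence
\[
f\in\ext\bigl(\overline B_{\mathcal{H}L_0(\mathbb D)}\bigr)\iff \Phi(f)=f'\in \ext\bigl(\overline B_{\mathcal H^\infty(\mathbb D)}\bigr),
\]
which is exactly the claim.

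There is really no obstacle here; the only thing one must be careful about is to point explicitly to the one-dimensional special case, since in general (for $X\ne \mathbb C$) the map $\Phi$ is only an into isometry, and then the corollary would fail. This is why the statement is restricted to $\mathbb D$.
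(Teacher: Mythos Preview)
Your proof is correct and matches the paper's approach: the corollary is stated there as an immediate consequence of the fact (from Proposition~\ref{prop: quotient}(b)) that $\Phi\colon f\mapsto f'$ is an onto isometry from $\mathcal HL_0(\mathbb D)$ to $\mathcal H^\infty(\mathbb D)$, and you have simply spelled out the standard extreme-point argument that this entails.
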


\begin{corollary} A function $f\in \mathcal HL_0(\mathbb D)$ attains its norm as a functional on $\mathcal G_0(\mathbb D)$ if and only if $f'\in \mathcal H^\infty (\mathbb D)$ attains its norm as a functional on $\mathcal G^\infty (\mathbb D)$.
\end{corollary}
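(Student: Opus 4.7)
The plan is to exploit the key structural fact established in Proposition~\ref{prop: quotient}(b) and the remark just preceding the corollary: when $X = \C$, the map $\Psi \colon \mathcal G^\infty(\mathbb D) \to \mathcal G_0(\mathbb D)$ is an isometric isomorphism, and its adjoint $\Psi^{\ast} = \Phi \colon \mathcal HL_0(\mathbb D) \to \mathcal H^{\infty}(\mathbb D)$, $f \mapsto f'$, is also an isometric isomorphism. Norm attainment of a functional is obviously preserved under pulling back through an isometric isomorphism of the underlying Banach space, so the equivalence should follow by a direct unwinding.

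First I would record the duality relation explicitly: for every $f \in \mathcal HL_0(\mathbb D)$ and every $v \in \mathcal G^\infty(\mathbb D)$,
\begin{equation*}
\langle f, \Psi(v) \rangle_{\mathcal HL_0(\mathbb D),\,\mathcal G_0(\mathbb D)} \; = \; \langle \Psi^{\ast}(f), v \rangle_{\mathcal H^\infty(\mathbb D),\,\mathcal G^\infty(\mathbb D)} \; = \; \langle f', v \rangle,
\end{equation*}
together with the isometric identities $L(f) = \|\Phi(f)\|_\infty = \|f'\|_\infty$ and $\|\Psi(v)\| = \|v\|$.

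Next, assume $f'$ attains its norm on $\mathcal G^\infty(\mathbb D)$, so there is $v \in \mathcal G^\infty(\mathbb D)$ with $\|v\|=1$ and $\langle f', v\rangle = \|f'\|_\infty$. Setting $u := \Psi(v)$, the displayed identity gives $\langle f, u\rangle = \langle f', v\rangle = \|f'\|_\infty = L(f)$, and $\|u\| = \|v\| = 1$ since $\Psi$ is an isometry, so $f$ attains its norm at $u$. Conversely, if $f$ attains its norm at some $u \in \mathcal G_0(\mathbb D)$ with $\|u\|=1$, then surjectivity of $\Psi$ yields a unique $v \in \mathcal G^\infty(\mathbb D)$ with $u = \Psi(v)$ and $\|v\|=\|u\|=1$, and the identity gives $\langle f', v\rangle = \langle f, u\rangle = L(f) = \|f'\|_\infty$, so $f'$ attains its norm at $v$.

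There is no real obstacle to overcome here: once one knows that $\Psi$ and $\Phi$ are mutually adjoint isometric isomorphisms (which is granted by Proposition~\ref{prop: quotient}(b) in the one-dimensional case), the statement is a purely formal transport of norm attainment across an isometric isomorphism of preduals. The only subtlety worth flagging in the write-up is that the equivalence relies on $\Psi$ being surjective (and not merely isometric into), which is exactly what fails for $X \neq \C$ and explains why the corollary is stated only for the disc.
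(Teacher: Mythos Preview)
Your proof is correct and is precisely the argument the paper has in mind: the corollary is stated without proof as an ``immediate consequence'' of the fact (from Proposition~\ref{prop: quotient}(b)) that for $X=\C$ the map $\Psi$ is an onto isometry with $\Psi^*=\Phi\colon f\mapsto f'$, and you have simply unpacked this transport of norm attainment. The only cosmetic point worth tidying is that norm attainment means $|\langle f',v\rangle|=\|f'\|_\infty$ rather than $\langle f',v\rangle=\|f'\|_\infty$, but since we are over $\C$ a rotation of $v$ fixes this and nothing in the argument changes.
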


Let us state one more consequence of Proposition \ref{prop: quotient}.

\begin{corollary}
	Let $X$ be a complex Banach space and $\varphi\in \mathcal G_0(B_X)$.
	\begin{enumerate}[(a)]
		\item There are sequences $(x_n),(y_n)\subset B_X$ with $x_n\neq y_n$ and $(a_n)\subset \ell_1$ such that
		\[ \varphi=\sum_{n=1}^\infty a_n m_{x_n,y_n}.\]
		Moreover, $\norm{\varphi}=\inf\sum_{n=1}^\infty|a_n|$ where the infimum is taken over all such representations of $\varphi$.
		\item There are sequences $(x_n)\subset B_X$, $(y_n)\subset S_X$ and $(a_n)\subset \ell_1$ such that
		\[ \varphi=\sum_{n=1}^\infty a_n e_{x_n,y_n}.\]
		Moreover, $\norm{\varphi}=\inf\sum_{n=1}^\infty|a_n|$ where the infimum is taken over all such representations of $\varphi$.
	\end{enumerate}
\end{corollary}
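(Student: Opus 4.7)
The plan is to derive both (a) and (b) from Proposition~\ref{prop:capsula convexa} via the classical series-representation argument that is standard in the theory of Lipschitz-free spaces and projective tensor products: whenever the closed unit ball of a Banach space is the closed absolutely convex hull of some set $A$ whose elements have norm at most $1$, every element admits an $\ell_1$-expansion with atoms in $A$ and coefficient norm arbitrarily close to its own norm.

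More precisely, I would first isolate the following general lemma. If $E$ is a Banach space and $A\subset E$ satisfies $\sup_{v\in A}\|v\|\leq 1$ and $\overline{B}_E=\overline{\Gamma(A)}$, then for every $\varphi\in E\setminus\{0\}$ and every $\epsilon>0$ there exist $(v_n)\subset A$ and $(a_n)\in\ell_1$ with $\varphi=\sum_n a_n v_n$ and $\sum_n|a_n|\leq(1+\epsilon)\|\varphi\|$. The proof is by iteration: choose $\eta\in(0,1)$ with $(1+\eta)/(1-\eta)\leq 1+\epsilon$, set $\varphi_0=\varphi$, and, given $\varphi_k\neq 0$, approximate $\varphi_k/((1+\eta)\|\varphi_k\|)\in B_E\subset\overline{\Gamma(A)}$ to within $\eta/(1+\eta)$ by a finite absolutely convex combination $u_k=\sum_j\lambda_j^{(k)}v_j^{(k)}\in\Gamma(A)$ with $\sum_j|\lambda_j^{(k)}|\leq 1$. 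Setting $w_k:=(1+\eta)\|\varphi_k\|u_k$ and $\varphi_{k+1}:=\varphi_k-w_k$, a short calculation gives $\|\varphi_{k+1}\|\leq\eta\|\varphi_k\|$, so the partial sums of $\sum_k w_k$ converge to $\varphi$ and, after reindexing the scalars $(1+\eta)\|\varphi_k\|\lambda_j^{(k)}$ as a single sequence $(a_n)$, one has $\sum_n|a_n|\leq(1+\eta)\|\varphi\|\sum_k\eta^k=\frac{1+\eta}{1-\eta}\|\varphi\|\leq(1+\epsilon)\|\varphi\|$. The reverse inequality $\|\varphi\|\leq\sum_n|a_n|$ is immediate from $\|v_n\|\leq 1$ and the triangle inequality, yielding $\|\varphi\|=\inf\sum_n|a_n|$ over all such representations.

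To conclude (a) I apply the lemma with $A=\{m_{x,y}:x,y\in B_X,\ x\neq y\}$: each $m_{x,y}$ has norm $1$ by Proposition~\ref{prop:diagram}(a), and the first equality in Proposition~\ref{prop:capsula convexa} provides $\overline{B}_{\mathcal G_0(B_X)}=\overline{\Gamma(A)}$. For (b) I take $A=\{e_{x,y}:x\in B_X,\ y\in S_X\}$, whose members have norm $1$ by the computation immediately preceding Proposition~\ref{prop:capsula convexa}. The only point where (b) is not covered verbatim by that proposition is that the lemma requires $\overline{B}_{\mathcal G_0(B_X)}=\overline{\Gamma(A)}$ rather than merely $\overline{\conv}(A)$; this is the sole mild obstacle, and it is immediate since $\lambda e_{x,y}=e_{x,\lambda y}\in A$ for every $\lambda\in\T$, so $A$ is circled and hence $\conv(A)=\Gamma(A)$. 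Beyond this observation the argument is entirely routine.
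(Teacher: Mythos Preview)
Your argument is correct: the iterative approximation lemma is standard and correctly stated, your bound $\|\varphi_{k+1}\|\le\eta\|\varphi_k\|$ follows from the chosen scaling, and the observation that $\{e_{x,y}:x\in B_X,\ y\in S_X\}$ is circled (since $\lambda e_{x,y}=e_{x,\lambda y}$ for $|\lambda|=1$) closes the small gap between $\overline{\conv}$ and $\overline{\Gamma}$ in Proposition~\ref{prop:capsula convexa}.

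Your route, however, differs from the paper's. You work entirely inside $\mathcal G_0(B_X)$, deducing the series representation directly from the description of its unit ball. The paper instead exploits Proposition~\ref{prop: quotient}: for (a) it lifts $\varphi$ through the quotient map $\pi\colon\mathcal F(B_X)\to\mathcal G_0(B_X)$ to an element $\mu$ of the Lipschitz-free space with $\|\mu\|\le\|\varphi\|+\varepsilon$, invokes the known molecule representation in $\mathcal F(B_X)$, and pushes it back down; for (b) it does the analogous thing via the quotient $\Psi\colon\mathcal G^\infty(B_X)\pten X\to\mathcal G_0(B_X)$ and the standard $\ell_1$-representation in projective tensor products. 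Your approach is more self-contained and isolates a reusable Banach-space lemma; the paper's is shorter on the page (the work being outsourced to cited results) and highlights the structural link between $\mathcal G_0(B_X)$ and the classical preduals $\mathcal F(B_X)$ and $\mathcal G^\infty(B_X)\pten X$.
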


\begin{proof}
	Given $\varepsilon>0$, Proposition \ref{prop: quotient} $(a)$ provides an element $\mu\in \mathcal F(B_X)$ with $\pi(\mu)=\varphi$ and $\norm{\pi(\mu)}\leq\norm{\varphi}+\varepsilon$. It is known (see e.g.  \cite[Lem. 3.3]{APPP}) that there are points $x_n, y_n\in B_X$ and $(a_n)\subset \ell_1$ with $\mu=\sum_{n=1}^\infty a_n\frac{\delta(x_n)-\delta(y_n)}{\norm{x_n-y_n}}$ and $ \sum_{n=1}^\infty |a_n|\leq \norm{\mu}+\varepsilon\leq \norm{\varphi}+2\varepsilon$ (here $\delta$ denotes the canonical embedding $\delta\colon B_X\to\mathcal F(B_X)$). Then $\varphi=\sum_{n=1}^\infty a_n \pi(\frac{\delta(x_n)-\delta(y_n)}{\norm{x_n-y_n}})=\sum_{n=1}^\infty a_n m_{x_n, y_n}$.
	
	Item $(b)$ follows similarly using the corresponding property for projective tensor products (see e.g. \cite[Prop. 2.8]{Ryan}) and $\mathcal G^\infty(B_X)$ \cite[Th. 5.1]{Mu2}.
\end{proof}

Another consequence of the linearization process  shows that functions in $\mathcal HL_0$ behave similarly to functions in $\Lip_0(B_X,B_Y)$ (that can be isometrically factorized through the free-Lipschitz spaces $\mathcal F(B_X)$ and $\mathcal F(B_Y)$). Given $f\in \mathcal HL_0(B_X,Y)$ with $f(B_X)\subset B_Y$ we can easily obtain a commutative diagram:

\begin{equation} \label{diagrama cuadrado}
	\xymatrix{
		B_X \ar[r]^{f}  \ar[d]_{\delta_X }    &  B_Y \ar[d]^{\delta_Y }  \\
		\mathcal{G}_0(B_X)  \ar[r]_{ T_{\delta_Y \circ f}}   &  \mathcal{G}_0(B_Y),
	}
\end{equation} where $T_{\delta_Y \circ f}$ is linear and $\|T_{\delta_Y \circ f}\|=L(f)$.

\section{Approximation properties on $\mathcal G_0(B_X)$} \label{sec:AP}

Following Mujica's ideas \cite{Mu} we devote this section to study the metric approximation property (MAP) and the approximation property (AP) for $\mathcal{G}_0(B_X)$ whenever $X$ has the same property. Beginning with the MAP, we  prove the following result about approximation of elements in the closed unit ball of the dual space. We first introduce the notation:
\begin{itemize}
	\item $\mathcal P_0(X,Y)$: The vector space of polynomials  $P\colon X\to Y$ such that $P(0)=0$  endowed with the norm $\|dP\|=L(P|_{B_X})$.
	\item $\mathcal P_{f,0}(X,Y)$: The subspace of $\mathcal P_0(X,Y)$ consisting of finite type polynomials.
\end{itemize}

\begin{proposition} \label{Prop:bola unidad}
	Let $X$ and $Y$ be complex Banach spaces. Then
	
	\begin{enumerate}[(a)]
		\item $\overline{B}_{\mathcal HL_0(B_X,Y)}= \overline{B_{\mathcal P_0(X,Y)}}^{\tau_0}$.
		\item If $X$ has the MAP then $\overline{B}_{\mathcal HL_0(B_X,Y)}= \overline{B_{\mathcal P_{f,0}(X,Y)}}^{\tau_0}$.
	\end{enumerate}
\end{proposition}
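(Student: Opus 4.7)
I plan to establish the two inclusions separately. The inclusion $\overline{B_{\mathcal P_0(X,Y)}}^{\tau_0}\subseteq \overline B_{\mathcal HL_0(B_X,Y)}$ is immediate: by Proposition~\ref{prop: HL0}, any $P\in \mathcal P_0(X,Y)$ with $\|dP\|\leq 1$ satisfies $L(P|_{B_X})\leq 1$, and $\overline B_{\mathcal HL_0(B_X,Y)}$ is $\tau_0$-closed because $\tau_0$-limits of holomorphic functions remain holomorphic and the bound $\|f(x)-f(y)\|\leq \|x-y\|$ passes to pointwise limits. For the harder direction, given $f\in \overline B_{\mathcal HL_0(B_X,Y)}$, I will expand $f(x)=\sum_{m=1}^\infty P_m(x)$ as a Taylor series at $0$ (convergent uniformly on compact subsets of $B_X$) and average against the Fej\'er kernel to form
\[
\sigma_N f(x)=\sum_{m=1}^N\Bigl(1-\tfrac{m}{N+1}\Bigr)P_m(x)=\frac{1}{2\pi}\int_0^{2\pi}F_N(\theta)\,f(e^{i\theta}x)\,d\theta,
\]
where $F_N\geq 0$ is the Fej\'er kernel with $\tfrac{1}{2\pi}\int F_N=1$; the integral representation follows from Cauchy's formula $P_m(x)=\tfrac{1}{2\pi}\int_0^{2\pi}e^{-im\theta}f(e^{i\theta}x)\,d\theta$. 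Pulling the norm inside the nonnegative integral gives $L(\sigma_N f|_{B_X})\leq L(f)\leq 1$, and writing $f(x)-\sigma_N f(x)=\tfrac{1}{2\pi}\int F_N(\theta)[f(x)-f(e^{i\theta}x)]\,d\theta$ together with $\|f(x)-f(e^{i\theta}x)\|\leq L(f)\|x\|\,|1-e^{i\theta}|$ and the concentration of $F_N$ at $\theta=0$ yields $\sigma_N f\to f$ uniformly on $B_X$, hence in $\tau_0$.

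\textbf{Part (b).} Assuming $X$ has the MAP, I fix a net of finite-rank operators $T_\alpha\in B_{\mathcal L(X,X)}$ with $T_\alpha\to I_X$ uniformly on compact subsets of $X$. By part~(a) it suffices to approximate every $P\in B_{\mathcal P_0(X,Y)}$ in $\tau_0$ by norm-$\leq 1$ finite-type polynomials, and the natural candidate is $P\circ T_\alpha$. This composition vanishes at $0$; it is of finite type because composing an $m$-homogeneous polynomial with a finite-rank operator produces a polynomial in finitely many linear functionals, which over $\mathbb C$ is a finite linear combination of $m$-th powers of linear forms via the polarization identity; its Lipschitz norm on $B_X$ satisfies $L(P\circ T_\alpha|_{B_X})\leq L(P|_{B_X})\|T_\alpha\|\leq 1$ since $T_\alpha(B_X)\subseteq B_X$; and $\tau_0$-convergence follows because each compact $K\subseteq B_X$ is contained in some $r B_X$ with $r<1$, so $T_\alpha \to I$ uniformly on $K$ combined with the Lipschitz bound on $P|_{B_X}$ gives $P\circ T_\alpha\to P$ uniformly on $K$.

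\textbf{Main obstacle.} The delicate step is the Ces\`aro estimate: I need \emph{simultaneously} the bound $L(\sigma_N f)\leq L(f)$ (which fails for the plain Taylor partial sums) and $\tau_0$-convergence $\sigma_Nf\to f$. Both hinge on $F_N$ being a nonnegative approximate identity on $\mathbb T$: nonnegativity allows the norm to be pulled inside the integral, while concentration near $\theta=0$ forces the convergence. Once this averaging step is in place, the remaining ingredients---$\tau_0$-closedness of the unit ball in (a), and the polarization identity plus the MAP-controlled composition in (b)---are routine.
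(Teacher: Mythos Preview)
Your proof is correct and follows the same overall strategy as the paper: Ces\`aro/Fej\'er means for part~(a) and composition with the MAP net $P\circ T_\alpha$ for part~(b). The one noteworthy difference is in how you justify the key inequality $L(\sigma_N f)\le L(f)$ in part~(a): the paper passes through the differential, writing $\|d\sigma_N f\|=\|\sigma_N(df)\|\le \|df\|$ and then invoking \cite[Prop.~5.2]{Mu} for the $\mathcal H^\infty$-contraction of Ces\`aro means, whereas you work directly with the integral representation $\sigma_N f(x)=\tfrac{1}{2\pi}\int F_N(\theta)f(e^{i\theta}x)\,d\theta$ and exploit the nonnegativity of the Fej\'er kernel to pull the Lipschitz estimate through. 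Your route is more self-contained and, as a bonus, the estimate $\|f(x)-\sigma_N f(x)\|\le L(f)\|x\|\cdot\tfrac{1}{2\pi}\int F_N(\theta)|1-e^{i\theta}|\,d\theta$ actually gives \emph{uniform} convergence on $B_X$, which is stronger than the pointwise convergence the paper extracts from Mujica's result (though pointwise suffices here since $\tau_0$ and pointwise convergence agree on the bounded set). Part~(b) is identical in both proofs.
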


\begin{proof}
	$(a)$ If $f\in \overline{B}_{\mathcal HL_0(B_X,Y)}$  then $f\in \mathcal H^\infty(B_X,Y)$ and $f(0)=0$.  Consider the Taylor series expansion of $f$ at 0: $f(x)=\sum_{k=0}^\infty P^kf(0)(x)$. As in \cite{Mu}, for each $m\in \N\cup\{0\}$, we denote
	$$
	S_mf(x)=\sum_{k=0}^m P^kf(0)(x) \qquad \textrm{and}\qquad \sigma_mf(x)=\frac{1}{m+1} \sum_{k=0}^m S_kf(x).
	$$
	Since $df=\sum_{k=0}^\infty dP^kf(0)\in\mathcal H^\infty(B_X, \mathcal L(X,Y))$ it follows from \cite[Prop.~5.2]{Mu} that $\sigma_mf(x)\to f(x)$ for all  $x\in B_X$ and \[\|d\sigma_mf\|=\|\sigma_m(df)\|\le \|df\|\le 1.\]
	This implies that $f\in \overline{B_{\mathcal P_0(X,Y)}}^{\tau_0}$.
	
	For the reverse inclusion, let $f\in \mathcal HL_0(B_X, Y)$ and  $(P_\alpha)\subset B_{\mathcal P_0(X,Y)}$ such that $P_\alpha (x)\to f(x)$ for all $x\in B_X$. Then $L(f)\le 1$ and so $f\in \overline{B}_{\mathcal HL_0(B_X,Y)}$.
	
	$(b)$ If $X$ has the MAP there is a net of finite rank operators $(T_\alpha)\subset \mathcal L(X,X)$ such that $T_\alpha (x)\to x$ for all $x\in X$ and $\|T_\alpha\|\le 1$ for every $\alpha$. Given $P\in B_{\mathcal P_0(X,Y)}$ we have that $P\circ T_\alpha$ belongs to $B_{\mathcal P_{f,0}(X,Y)}$ (since $L(P\circ T_\alpha|_{B_X})< 1$) and $P(T_\alpha x)\to P(x)$ for every $x$.  This means that $P\in\overline{B_{\mathcal P_{f,0}(X,Y)}}^{\tau_0}$. Finally, an appeal to $(a)$ yields the result.
\end{proof}

\begin{theorem}\label{theor: MAP}
	$X$ has the MAP if and only if $\mathcal{G}_0(B_X)$ has the MAP.
\end{theorem}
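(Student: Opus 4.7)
The plan is to adapt Mujica's strategy for the predual of $\mathcal H^\infty(B_X)$ to the present setting, combining the polynomial approximation in Proposition \ref{Prop:bola unidad} with the MAP of $X$.

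The easy direction is immediate from Proposition \ref{prop: X 1-complemented}: since $X$ is isometric to a $1$-complemented subspace of $\mathcal G_0(B_X)$, the MAP passes from $\mathcal G_0(B_X)$ to $X$ (taking $\lambda=\lambda'=1$ in the relation recalled in the Notation paragraph).

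For the nontrivial direction, assume $X$ has the MAP and fix a net $(T_\alpha)\subset B_{\mathcal L(X,X)}$ of finite-rank operators with $T_\alpha x\to x$ for every $x\in X$. Writing $\sigma_m$ for the Cesaro mean of Taylor polynomials at $0$ introduced in the proof of Proposition \ref{Prop:bola unidad}, I introduce the auxiliary operators
\[ U_{\alpha,m}\colon \mathcal HL_0(B_X)\longrightarrow \mathcal HL_0(B_X),\qquad f\longmapsto \sigma_m(f)\circ T_\alpha.\]
The bound $\|U_{\alpha,m}\|\leq 1$ follows from $L(\sigma_m(f))\leq L(f)$ and $\|T_\alpha\|\leq 1$. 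Since $\sigma_m(f)$ is a polynomial of degree at most $m$ whose composition with $T_\alpha$ depends only on its restriction to the finite-dimensional subspace $E_\alpha=T_\alpha(X)$, the range of $U_{\alpha,m}$ is contained in the finite-dimensional space $\{P\circ T_\alpha:P\in\mathcal P_0(E_\alpha,\mathbb C),\ \deg P\leq m\}$, so $U_{\alpha,m}$ has finite rank. The key point is to verify that $U_{\alpha,m}$ is weak-star continuous: Cauchy's integral formula in one complex variable gives, for $y\in B_X$ and $r>0$ with $r\|y\|<1$,
\[ P^jf(0)(y)=\frac{1}{2\pi}\int_0^{2\pi}r^{-j}e^{-ij\theta}f(re^{i\theta}y)\,d\theta,\]
which exhibits $f\mapsto P^jf(0)(y)$ as a Bochner integral of the weak-star continuous evaluations $\delta(re^{i\theta}y)\in\mathcal G_0(B_X)$, hence as an element of $\mathcal G_0(B_X)$. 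Combined with Proposition \ref{prop:diagram}(d), this shows that $U_{\alpha,m}$ is $w^*$-continuous on bounded sets, and the Krein-Smulian theorem upgrades this to global $w^*$-continuity. Thus $U_{\alpha,m}=V_{\alpha,m}^*$ for some finite-rank $V_{\alpha,m}\in\mathcal L(\mathcal G_0(B_X),\mathcal G_0(B_X))$ with $\|V_{\alpha,m}\|\leq 1$.

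Finally, I will show $(V_{\alpha,m})\to I_{\mathcal G_0(B_X)}$ in the weak operator topology. From the identity $\langle V_{\alpha,m}\varphi-\varphi,f\rangle=\langle \varphi,\sigma_m(f)\circ T_\alpha-f\rangle$, any finite collection of test pairs $(\varphi_i,f_j)$ is handled in two steps: first take $m$ large so that $\sigma_m(f_j)\to f_j$ in $w^*$ (using the pointwise convergence in Proposition \ref{Prop:bola unidad} together with Proposition \ref{prop:diagram}(d)), then $\alpha$ large so that $\sigma_m(f_j)\circ T_\alpha\to \sigma_m(f_j)$ in $w^*$ (continuity of the polynomial $\sigma_m(f_j)$ and $T_\alpha\to I$ pointwise, again invoking Proposition \ref{prop:diagram}(d)). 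A standard Mazur-lemma argument inside a finite product $\mathcal G_0(B_X)^n$ then upgrades this WOT convergence to the existence of convex combinations $W_\gamma$ of the $V_{\alpha,m}$ which are finite rank, satisfy $\|W_\gamma\|\leq 1$, and converge to the identity pointwise in norm, giving the MAP of $\mathcal G_0(B_X)$. The main obstacle in this approach is the verification of the $w^*$-continuity of $U_{\alpha,m}$ via the Cauchy integral representation of the Taylor coefficients; once it is in place, the remainder of the argument is routine bookkeeping.
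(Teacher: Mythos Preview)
Your argument is correct, but it follows a genuinely different route from the paper's proof. The paper works directly in $\mathcal G_0(B_X)$: it applies Proposition~\ref{Prop:bola unidad}\,(b) with the \emph{vector-valued} target $Y=\mathcal G_0(B_X)$ to the canonical embedding $\delta\in\overline B_{\mathcal HL_0(B_X,\mathcal G_0(B_X))}$, obtaining a bounded net of finite-type polynomials $(P_\alpha)\subset B_{\mathcal P_{f,0}(X,\mathcal G_0(B_X))}$ with $P_\alpha(x)\to\delta(x)$ pointwise, and then linearizes each $P_\alpha$ via Proposition~\ref{prop:diagram}\,(c) to a finite-rank contraction $T_{P_\alpha}$ on $\mathcal G_0(B_X)$. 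Since $T_{P_\alpha}(\delta(x))=P_\alpha(x)\to\delta(x)$ and the net is bounded, one gets $T_{P_\alpha}\to Id$ in the strong operator topology at once, with no Mazur-type step needed.

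By contrast, you work on the dual side: you build finite-rank contractions $U_{\alpha,m}=(\sigma_m(\,\cdot\,))\circ T_\alpha$ on $\mathcal HL_0(B_X)$, verify their $w^*$--$w^*$ continuity via the Cauchy integral representation of Taylor coefficients, descend to preadjoints $V_{\alpha,m}$, and finally pass from the fact that $I$ lies in the WOT closure of $\{V_{\alpha,m}\}$ to SOT approximants through Mazur's lemma in $\mathcal G_0(B_X)^n$. This is a valid dualization of Mujica's scheme; note only that your phrase ``$(V_{\alpha,m})\to I$ in WOT'' should be read as ``$I$ lies in the WOT closure of $\{V_{\alpha,m}\}$'', since the two-step choice (first $m$, then $\alpha$) does not give convergence of the product-ordered net --- but that is exactly what your Mazur step uses, so no harm is done. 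The paper's approach is shorter and avoids both the $w^*$-continuity verification and the convexification step by exploiting the linearization property of $\mathcal G_0(B_X)$; your approach has the merit of making the approximating operators on $\mathcal HL_0(B_X)$ explicit and of using Proposition~\ref{Prop:bola unidad} only in the scalar-valued case.
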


\begin{proof} $X$ being isometric to a 1-complemented subspace of $\mathcal{G}_0(B_X)$ it is clear that $X$ has the MAP
	when $\mathcal{G}_0(B_X)$ has it.
	
	Now, suppose that $X$ has the MAP and consider the mapping $\delta\in \overline{B}_{\mathcal HL_0(B_X,\mathcal{G}_0(B_X))}$. By Proposition \ref{Prop:bola unidad} there exist a net $(P_\alpha)\subset B_{\mathcal P_{f,0}(X,\mathcal{G}_0(B_X))}$ such that $P_\alpha(x)\to \delta(x)$ for all $x\in B_X$. Applying a linearization as in Proposition \ref{prop:diagram} we obtain finite rank  linear mappings $T_{P_\alpha}$ with norm bounded by 1, such that the following diagram commutes:
	\begin{equation*}
		\xymatrix{
			B_X \ar[r]^{P_\alpha}  \ar[d]_{\delta}    &  \mathcal{G}_0(B_X)  \\
			\mathcal{G}_0(B_X)  \ar[ru]_{T_{P_\alpha}}  &
		}
	\end{equation*}
	
	Note that $T_{P_\alpha}(\delta(x))=P_\alpha(x)\to \delta(x)=Id(\delta(x))$. Then, we have that $T_{P_\alpha}\to Id$ on $\vspan\{\delta(x):x\in B_X\}$. Since the net $(T_{P_\alpha})$ is bounded the same holds for the closure. Hence, $\mathcal{G}_0(B_X)$ has the MAP.
\end{proof}

Note that our arguments cannot be adapted to the case in which $X$ has the BAP since the approximations of the identity could send the unit ball $B_X$ to a bigger ball (and, hence, we cannot control the Lipschitz norm of $P\circ T_\alpha|_{B_X}$ as in Proposition \ref{Prop:bola unidad} $(b)$).

\begin{question} Does $\mathcal{G}_0(B_X)$ have the BAP whenever $X$ has the BAP?
\end{question}

The same question for $\mathcal{G}^\infty(B_X)$  was posed by Mujica in \cite{Mu}. As far we know, this question is still open.

In contrast to this unknown case about the BAP, the analogous statement for the AP (Approximation Property -without bounds-) was successfully solved by Mujica \cite{Mu} for $\mathcal{G}^\infty(B_X)$. We now turn to this goal for our space $\mathcal{G}_0(B_X)$, following Mujica's scheme but somewhat simplifying the arguments.

Note that in the results about the MAP we used several times that a bounded net of linear operators converges uniformly on compact sets if and only if it converges pointwise on a dense set. For the AP we cannot make use of this kind of argument so our first step will be to describe a locally convex topology $\tau_\gamma$ such that the following topological isomorphism holds:
\begin{equation} \label{tau gamma}
	(\mathcal HL_0(B_X,Y),\tau_\gamma)\cong (\mathcal L(\mathcal{G}_0(B_X),Y),\tau_0).
\end{equation}

\begin{remark}
	Note that for a topology $\tau_\gamma$ satisfying \eqref{tau gamma}, if $(f_\alpha)$ is a bounded net in $ \mathcal HL_0(B_X,Y)$ which converges pointwise to $f\in  \mathcal HL_0(B_X,Y)$ then $f_\alpha\overset{\tau_\gamma}{\to} f$. Indeed, linearizating we obtain a bounded net $(T_{f_\alpha}) \subset \mathcal L(\mathcal{G}_0(B_X),Y)$ which converges pointwise to $T_f$. Then, $T_{f_\alpha}\overset{\tau_0}{\to} T_f$ implying that $f_\alpha\overset{\tau_\gamma}{\to} f$.
	
	As a consequence, we derive from Proposition \ref{Prop:bola unidad} $(a)$ the following identity: \begin{equation}\label{bola tau gamma}
		\overline{B}_{\mathcal HL_0(B_X,Y)}= \overline{B_{\mathcal P_0(X,Y)}}^{\tau_\gamma}.  
	\end{equation}
\end{remark}

In order to work with the $\tau_0$-topology in $\mathcal L(\mathcal{G}_0(B_X),Y)$ it would be good to have a useful description of the compact sets of the space $\mathcal{G}_0(B_X)$. For that, we appeal to the following variation of the classical Grothendieck description of compact sets (which can be proved, for instance, by slightly modifying the proof of \cite[Prop. 9, pg 134]{Ro}):

\begin{lemma}
	Let $X$ be a Banach space and $V\subset S_X$ such that $\overline B_X=\overline\Gamma (V)$. For each compact set $K\subset X$ there exist sequences $(\alpha_j)\in c_0$ (with $\alpha_j>0$ for all $j$) and $(v_j)\subset V$ such that $K\subset \overline\Gamma (\{\alpha_jv_j\})$.
\end{lemma}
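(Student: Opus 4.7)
The plan is to adapt the classical finite-net construction of Grothendieck (as in \cite{Ro}, Prop.~9, p.~134) by performing the successive finite approximations inside $\Gamma(V)$ instead of in the ambient $X$. After rescaling we may assume $K\subset\overline{B}_X$. Set $\epsilon_n=4^{-n}$ (with $\epsilon_0=1$) and construct inductively finite sets $F_n\subset \epsilon_{n-1}\Gamma(V)$ so that each residual $K_{n-1}:=\{x-\sum_{i=1}^{n-1} f_i^x:x\in K\}\subset\epsilon_{n-1}\overline{B}_X$ satisfies $K_{n-1}\subset F_n+\epsilon_n B_X$. This is possible because $K_{n-1}\subset\epsilon_{n-1}\overline\Gamma(V)$: after picking a finite $\epsilon_n/2$-net in $K_{n-1}$, one approximates each net element within $\epsilon_n/2$ by a finite combination $\sum_k\lambda_k v_k$ with $v_k\in V$ and $\sum_k|\lambda_k|\leq\epsilon_{n-1}$. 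Each $x\in K$ then admits a decomposition $x=\sum_{n\geq 1}f_n^x$, with $f_n^x\in F_n$ chosen nearest to the $n$-th residual.

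Let $V_n\subset V$ denote the finite set of $V$-vectors appearing in the (fixed) expansions of elements of $F_n$. Each $f\in F_n$ is written as $f=\sum_{v\in V_n}\lambda_v^{(f)}v$ with $\sum_{v\in V_n}|\lambda_v^{(f)}|\leq\epsilon_{n-1}$. Define the candidate generator family $\{\alpha_{n,v}\,v:n\geq 1,\ v\in V_n\}$ with $\alpha_{n,v}:=2^n\epsilon_{n-1}=4\cdot 2^{-n}$. Since each $V_n$ is finite this family is countable; since $\alpha_{n,v}=4\cdot 2^{-n}\to 0$, an enumeration produces the required $(\alpha_j)\in c_0$ with $\alpha_j>0$ and $(v_j)\subset V$.

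To verify $K\subset\overline\Gamma(\{\alpha_j v_j\})$, for $x\in K$ rewrite
\[x=\sum_{n\geq 1}\sum_{v\in V_n}\lambda_v^{(f_n^x)}\,v=\sum_{n,v}\frac{\lambda_v^{(f_n^x)}}{\alpha_{n,v}}\bigl(\alpha_{n,v}\,v\bigr),\]
and estimate the absolute-coefficient sum:
\[\sum_{n,v}\frac{|\lambda_v^{(f_n^x)}|}{\alpha_{n,v}}\;\leq\;\sum_{n\geq 1}\frac{1}{\alpha_{n,v}}\sum_{v\in V_n}|\lambda_v^{(f_n^x)}|\;\leq\;\sum_{n\geq 1}\frac{\epsilon_{n-1}}{2^n\epsilon_{n-1}}\;=\;\sum_{n\geq 1}2^{-n}\;=\;1.\]

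The main obstacle is the balance encoded in the scale $\alpha_{n,v}$: it must tend to $0$ (so that $(\alpha_j)\in c_0$) but simultaneously dominate $\epsilon_{n-1}$ by a summable factor so that the total absolute-coefficient sum above remains at most $1$. The choice $\alpha_{n,v}=2^n\epsilon_{n-1}$ together with $\epsilon_n=4^{-n}$ meets both requirements because $2^n\epsilon_{n-1}=4\cdot 2^{-n}\to 0$ while $\epsilon_{n-1}/\alpha_{n,v}=2^{-n}$ sums to $1$; no finer information about $V$ beyond $\overline\Gamma(V)=\overline{B}_X$ is needed.
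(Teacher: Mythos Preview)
Your proof is correct and is precisely the modification of the classical Grothendieck argument that the paper alludes to (the paper does not give a detailed proof, only referring to \cite{Ro}, Prop.~9). The inductive construction of $F_n\subset\epsilon_{n-1}\Gamma(V)$, the decomposition $x=\sum_n f_n^x$, and the rescaling by $\alpha_{n,v}=2^n\epsilon_{n-1}$ all check out; in particular, total boundedness of the residual sets $K_{n-1}$ follows since each is contained in a finite union of translates of $K$.
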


A direct consequence of this lemma, along with Proposition \ref{prop:capsula convexa} is the following:

\begin{corollary} \label{cor: compacto en G0}
	Let $K\subset \mathcal{G}_0(B_X)$ be a compact set. Then there exist sequences $(\alpha_j)\in c_0$ and $(x_j,y_j)\subset B_X\times B_X$ (with $\alpha_j>0$ and $x_j\not= y_j$ for all $j$) such that $K\subset \overline\Gamma (\{\alpha_j m_{x_jy_j}\})$. 
\end{corollary}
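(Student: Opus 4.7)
The plan is essentially a direct application of the preceding lemma to the concrete norming set supplied by Proposition \ref{prop:capsula convexa}. First I would recall from Proposition~\ref{prop:diagram}(a) that $\|m_{x,y}\|=1$ for every $x\neq y$ in $B_X$, so the set
\[
V := \{m_{x,y} : x,y \in B_X,\ x\neq y\}
\]
is contained in $S_{\mathcal G_0(B_X)}$. By Proposition \ref{prop:capsula convexa}, its absolutely convex hull is dense in the closed unit ball, i.e.\ $\overline{B}_{\mathcal G_0(B_X)} = \overline{\Gamma}(V)$, which is precisely the hypothesis required by the lemma.

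Then I would apply the preceding lemma to the Banach space $\mathcal G_0(B_X)$ with this choice of $V$ and with the compact set $K$. The lemma produces sequences $(\alpha_j)\in c_0$ with $\alpha_j>0$ and $(v_j)\subset V$ such that $K\subset \overline{\Gamma}(\{\alpha_j v_j\})$. Since each $v_j\in V$ is by definition of the form $m_{x_j,y_j}$ for some pair $(x_j,y_j)\in B_X\times B_X$ with $x_j\neq y_j$, rewriting gives $K\subset \overline{\Gamma}(\{\alpha_j m_{x_j,y_j}\})$, which is exactly the statement.

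There is no real obstacle: the statement is a direct specialization of the general Grothendieck-type lemma to the particular norming set of molecules provided by Proposition \ref{prop:capsula convexa}. The only thing worth double-checking is the notational match — namely that the points $x_j, y_j$ produced from $v_j \in V$ inherit the conditions $x_j, y_j \in B_X$ and $x_j\neq y_j$, which is automatic from the definition of $V$.
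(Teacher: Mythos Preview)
Your proposal is correct and matches the paper's approach exactly: the paper does not even write out a proof, simply stating the corollary as ``a direct consequence of this lemma, along with Proposition~\ref{prop:capsula convexa}.'' Your argument just makes explicit the two ingredients (that the molecules lie in $S_{\mathcal G_0(B_X)}$ and that their closed absolutely convex hull is $\overline{B}_{\mathcal G_0(B_X)}$) needed to invoke the lemma.
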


Now we can introduce, as in \cite[Th. 4.8]{Mu}, a topology $\tau_\gamma$ satisfying \eqref{tau gamma}.

\begin{theorem}\label{teo: top tau delta}
	Let $\tau_\gamma$ be the locally convex topology on  $\mathcal HL_0(B_X,Y)$ generated by the seminorms
	$$
	p(f)=\sup_j \alpha_j\frac{\|f(x_j)-f(y_j)\|}{\|x_j-y_j\|}
	$$ where $(\alpha_j)\in c_0$, $(x_j,y_j)\subset B_X\times B_X$ and $\alpha_j>0$, $x_j\not= y_j$ for all $j$. Then, the mapping
	\begin{eqnarray*}
		(\mathcal HL_0(B_X,Y),\tau_\gamma)&\to &(\mathcal L(\mathcal{G}_0(B_X),Y),\tau_0)\\
		f &\mapsto & T_f
	\end{eqnarray*} is a topological isomorphism.
\end{theorem}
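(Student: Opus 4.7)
By Proposition~\ref{prop:diagram}(c) the map $f \mapsto T_f$ is a linear bijection (indeed, an isometric isomorphism of Banach spaces). So the task reduces to showing that the topologies $\tau_\gamma$ on the domain and $\tau_0$ on the codomain correspond under this bijection. The plan is to match the defining seminorms of the two topologies directly, using the molecule identity
\[
  \|T_f(m_{x,y})\| \;=\; \frac{\|f(x)-f(y)\|}{\|x-y\|}
\]
and Corollary~\ref{cor: compacto en G0} describing the compact subsets of $\mathcal{G}_0(B_X)$.

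For continuity of $f \mapsto T_f$, fix a compact set $K \subset \mathcal{G}_0(B_X)$. By Corollary~\ref{cor: compacto en G0} there exist $(\alpha_j) \in c_0$ with $\alpha_j > 0$ and $(x_j,y_j) \in B_X\times B_X$ with $x_j \neq y_j$ such that $K \subset \overline{\Gamma}(\{\alpha_j m_{x_j,y_j}\})$. Since $T_f$ is linear and continuous, for any $u \in K$ written as a limit of absolutely convex combinations $\sum \lambda_j \alpha_j m_{x_j,y_j}$ with $\sum|\lambda_j|\leq 1$, one gets
\[
  \|T_f(u)\| \;\leq\; \sup_j \alpha_j \|T_f(m_{x_j,y_j})\| \;=\; \sup_j \alpha_j\frac{\|f(x_j)-f(y_j)\|}{\|x_j-y_j\|} \;=\; p(f),
\]
where $p$ is the $\tau_\gamma$-seminorm built from the data $(\alpha_j),(x_j,y_j)$. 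Thus $\sup_{u\in K}\|T_f(u)\|\leq p(f)$, which gives continuity of $f\mapsto T_f$ from $\tau_\gamma$ to $\tau_0$.

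For the continuity of the inverse, fix a $\tau_\gamma$-seminorm $p$ determined by $(\alpha_j) \in c_0$ (with $\alpha_j>0$) and $(x_j,y_j) \subset B_X \times B_X$ with $x_j\neq y_j$. Set
\[
  K \;=\; \{\alpha_j m_{x_j,y_j} : j \in \mathbb{N}\} \cup \{0\} \;\subset\; \mathcal{G}_0(B_X).
\]
Since $\|m_{x_j,y_j}\|=1$ (by Proposition~\ref{prop:capsula convexa}) and $\alpha_j \to 0$, the sequence $\alpha_j m_{x_j,y_j}$ converges to $0$, so $K$ is norm-compact. Then
\[
  p(f) \;=\; \sup_j \alpha_j\frac{\|f(x_j)-f(y_j)\|}{\|x_j-y_j\|} \;=\; \sup_j \|T_f(\alpha_j m_{x_j,y_j})\| \;=\; \sup_{u\in K}\|T_f(u)\|,
\]
which is exactly the $\tau_0$-seminorm associated with $K$. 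Hence the inverse map $T \mapsto f$ (where $f = T\circ \delta$) is continuous from $\tau_0$ to $\tau_\gamma$, completing the proof that $f\mapsto T_f$ is a topological isomorphism.

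The main obstacle is the first direction: passing from a general compact set $K \subset \mathcal{G}_0(B_X)$ to a controllable seminorm in $\tau_\gamma$. This is precisely what Corollary~\ref{cor: compacto en G0} provides, once one observes that the molecules $m_{x,y}$ form a $1$-norming subset of the unit sphere of $\mathcal{G}_0(B_X)$; the rest of the argument is then a direct bookkeeping via the identity $\|T_f(m_{x,y})\|=\|f(x)-f(y)\|/\|x-y\|$.
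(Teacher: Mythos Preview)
Your proof is correct and follows essentially the same approach as the paper's: both directions use Corollary~\ref{cor: compacto en G0} to bound the $\tau_0$-seminorm $\sup_{u\in K}\|T_f u\|$ by a $\tau_\gamma$-seminorm, and conversely take $K=\{\alpha_j m_{x_j,y_j}\}\cup\{0\}$ as the compact set realizing a given $\tau_\gamma$-seminorm. The only cosmetic difference is that you spell out the absolutely-convex-combination argument for the inequality $\|T_f u\|\le \sup_j\|T_f(\alpha_j m_{x_j,y_j})\|$, whereas the paper states it directly.
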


\begin{proof}
	If $K\subset \mathcal{G}_0(B_X)$ is a compact set, by the previous corollary there are sequences $(\alpha_j)\in c_0$, $(x_j,y_j)\subset B_X\times B_X$ with $\alpha_j>0$, $x_j\not= y_j$ for all $j$, such that $K\subset \overline\Gamma (\{\alpha_j m_{x_jy_j}\})$. Then, for all $f\in \mathcal HL_0(B_X,Y)$,
	$$
	\sup_{u\in K} \|T_f u\|\le \sup_j \|T_f(\alpha_j m_{x_jy_j})\|= \sup_j \alpha_j\frac{\|f(x_j)-f(y_j)\|}{\|x_j-y_j\|},
	$$ showing that the mapping $f\mapsto T_f$ is $\tau_\gamma-\tau_0$ continuous.
	
	To prove the continuity of the inverse mapping note that for a seminorm $p$ of $\tau_\gamma$, the associated sequence $\alpha_j m_{x_jy_j}$ converges to 0 in $\mathcal{G}_0(B_X)$. Thus, the set $K=\{\alpha_j m_{x_jy_j}\}\cup\{0\}$ is a compact set in $\mathcal{G}_0(B_X)$ and
	$p(f)=\sup_j \|T_f(\alpha_j m_{x_jy_j})\|=\sup_{u\in K} \|T_f u\|$.  
\end{proof}

Let us state separately the corresponding result for Lipschitz-free spaces, that we will not use but it might be of independent interest. 

\begin{theorem} Let $M$ be a complete pointed metric space. Then
	\begin{enumerate}[(i)]  \item For each compact subset $K$ of $\mathcal F(M)$ there exists sequences $(\alpha_j)\in c_0$ and $(x_j,y_j)\subset M\times M$ (with $\alpha_j>0$ and $x_j\not= y_j$ for all $j$) such that $K\subset \overline\Gamma (\{\alpha_j m_{x_jy_j}\})$. 
		\item Given a Banach space $Y$, let $\tau_\gamma$ be the locally convex topology on  $\Lip_0(M,Y)$ generated by the seminorms
		$$
		p(f)=\sup_j \alpha_j\frac{\|f(x_j)-f(y_j)\|}{d(x_j,y_j)}
		$$ where $(\alpha_j)\in c_0$, $(x_j,y_j)\subset M\times M$ and $\alpha_j>0$, $x_j\not= y_j$ for all $j$. Then, the mapping
		\begin{eqnarray*}
			(\mathcal \Lip_0(M,Y),\tau_\gamma)&\to &(\mathcal L(\mathcal{F}(M),Y),\tau_0)\\
			f &\mapsto & T_f
		\end{eqnarray*} is a topological isomorphism.
	\end{enumerate}
\end{theorem}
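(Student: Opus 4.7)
The plan is to simply transcribe the argument already carried out for $\mathcal G_0(B_X)$, replacing the molecules $m_{x,y}\in\mathcal G_0(B_X)$ by the classical elementary molecules in the Lipschitz-free space $\mathcal F(M)$. The only input specific to the Lipschitz setting is the standard fact that $\overline{B}_{\mathcal F(M)}=\overline\Gamma\{m_{x,y}: x\neq y\in M\}$, which follows because the supremum
\[ \lipnorm{f}=\sup_{x\neq y}\frac{|f(x)-f(y)|}{d(x,y)}=\sup_{x\neq y}|\langle f,m_{x,y}\rangle| \]
shows that $\{m_{x,y}\}$ is $1$-norming for $\Lip_0(M)=\mathcal F(M)^*$, hence its absolutely convex closed hull equals $\overline B_{\mathcal F(M)}$ by Hahn--Banach.

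For (i), I would apply the Grothendieck-type lemma cited just before Corollary~\ref{cor: compacto en G0} to $X=\mathcal F(M)$ and $V=\{m_{x,y}: x\neq y\in M\}\subset S_{\mathcal F(M)}$. This directly yields, for any compact $K\subset\mathcal F(M)$, sequences $(\alpha_j)\in c_0$ with $\alpha_j>0$ and $(x_j,y_j)$ with $x_j\neq y_j$ such that $K\subset\overline\Gamma(\{\alpha_j m_{x_j,y_j}\})$; this is exactly the analogue of Corollary~\ref{cor: compacto en G0}.

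For (ii), I would reproduce the proof of Theorem~\ref{teo: top tau delta} verbatim. To see continuity of $f\mapsto T_f$ from $\tau_\gamma$ to $\tau_0$, let $K\subset\mathcal F(M)$ be compact. By (i), choose $(\alpha_j)\in c_0$ and $(x_j,y_j)$ with $K\subset\overline\Gamma(\{\alpha_j m_{x_j,y_j}\})$. Then for every $f\in\Lip_0(M,Y)$,
\[ \sup_{u\in K}\|T_f u\|\le\sup_j\|T_f(\alpha_j m_{x_j,y_j})\|=\sup_j \alpha_j\frac{\|f(x_j)-f(y_j)\|}{d(x_j,y_j)}=p(f), \]
so the map is $\tau_\gamma$--$\tau_0$ continuous. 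For the converse, given a seminorm $p$ associated with sequences $(\alpha_j)\in c_0$ and $(x_j,y_j)$, observe that $\alpha_j m_{x_j,y_j}\to 0$ in $\mathcal F(M)$ (because each molecule has norm one and $\alpha_j\to 0$), so $K=\{\alpha_j m_{x_j,y_j}\}\cup\{0\}$ is compact in $\mathcal F(M)$ and $p(f)=\sup_{u\in K}\|T_f u\|$, yielding the reverse continuity.

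No step presents a real obstacle: everything reduces to the fact that molecules are $1$-norming and to invoking the Grothendieck-type lemma already recorded in the text. The only point to mention is that the linearization map $f\mapsto T_f$ in the Lipschitz setting (identifying $\Lip_0(M,Y)$ with $\mathcal L(\mathcal F(M),Y)$) is well known and plays the same role here as the identification used in Theorem~\ref{teo: top tau delta}.
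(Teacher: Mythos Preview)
Your proposal is correct and follows exactly the approach the paper intends: the theorem is stated in the paper without proof, immediately after Corollary~\ref{cor: compacto en G0} and Theorem~\ref{teo: top tau delta}, as a direct transcription of those arguments to the Lipschitz-free setting. Your only additional input---that $\overline B_{\mathcal F(M)}=\overline\Gamma\{m_{x,y}:x\neq y\}$---is the well-known analogue of Proposition~\ref{prop:capsula convexa}, and with it the Grothendieck-type lemma and the proof of Theorem~\ref{teo: top tau delta} carry over verbatim, just as you describe.
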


Now we examine the relationship between the topologies $\tau_\gamma$ and $\tau_0$ in  $\mathcal HL_0(B_X,Y)$.

\begin{proposition}\label{prop: equivalent topologies}
	Let $X$ and $Y$ be complex Banach spaces. Then, $\tau_\gamma$ is finer than $\tau_0$ in $\mathcal HL_0(B_X,Y)$, and these topologies are equivalent in $\mathcal P(^mX,Y)$ for each $m\in\N$.
\end{proposition}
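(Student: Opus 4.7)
The plan splits into two claims and uses the linearization identity $(\mathcal{HL}_0(B_X,Y),\tau_\gamma)\cong (\mathcal L(\mathcal G_0(B_X),Y),\tau_0)$ of Theorem~\ref{teo: top tau delta} throughout.

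For the inclusion $\tau_\gamma\supseteq \tau_0$, I would take a basic $\tau_0$-seminorm $q_K(f)=\sup_{x\in K}\|f(x)\|$ with $K\subset B_X$ compact and dominate it by a single $\tau_\gamma$-seminorm. Since $\delta\colon B_X\to \mathcal G_0(B_X)$ is $1$-Lipschitz by Proposition~\ref{prop:diagram}(a), $\delta(K)$ is compact in $\mathcal G_0(B_X)$, and Corollary~\ref{cor: compacto en G0} furnishes $(\alpha_j)\in c_0$ with $\alpha_j>0$ and $(x_j,y_j)\in B_X\times B_X$ with $x_j\neq y_j$ such that $\delta(K)\subset \overline{\Gamma}(\{\alpha_j m_{x_j,y_j}\})$. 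Writing $f=T_f\circ\delta$ and using that the norm of the $T_f$-image of a closed absolutely convex hull is controlled by the norms of the $T_f$-images of the generators, one obtains
\[
q_K(f)\leq \sup_{u\in \overline{\Gamma}(\{\alpha_j m_{x_j,y_j}\})}\|T_fu\| \leq \sup_j \alpha_j\|T_f m_{x_j,y_j}\|=\sup_j \alpha_j\frac{\|f(x_j)-f(y_j)\|}{\|x_j-y_j\|},
\]
which is precisely a $\tau_\gamma$-seminorm, proving the first claim.

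For the equivalence on $\mathcal P(^m X,Y)$, there only remains to check $\tau_\gamma\subseteq \tau_0$ on this subspace. I would fix a net $(P_\alpha)\subset \mathcal P(^m X,Y)$ with $P_\alpha\to 0$ in $\tau_0$ and a $\tau_\gamma$-seminorm $p(f)=\sup_j \alpha_j\frac{\|f(x_j)-f(y_j)\|}{\|x_j-y_j\|}$. Pointwise convergence on $B_X$ together with $m$-homogeneity makes $(P_\alpha)$ pointwise bounded on all of $X$. The scalar polynomial uniform boundedness principle applied to each family $\{y^*\circ P_\alpha\}_\alpha$ ($y^*\in Y^*$), followed by Banach--Steinhaus in $Y$, yields $M:=\sup_\alpha \|P_\alpha\|_{\infty,B_X}<\infty$. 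The mean value theorem along segments of $B_X$, combined with $dP_\alpha(x)(h)=m\check P_\alpha(x^{m-1},h)$ and the polarization estimate $\|\check P_\alpha\|\leq \tfrac{m^m}{m!}\|P_\alpha\|_\infty$, then produces a constant $C_m$ with
\[
L(P_\alpha)=\|dP_\alpha\|_\infty\leq C_m\|P_\alpha\|_\infty\leq C_mM\qquad\text{for every }\alpha.
\]

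The proof concludes with a standard $c_0$-trick: given $\varepsilon>0$, choose $N$ with $\alpha_j\leq \varepsilon/(2C_mM+1)$ for $j>N$; beyond some index, pointwise convergence of $P_\alpha$ at the finitely many points $x_1,y_1,\ldots,x_N,y_N$ makes the partial supremum over $j\leq N$ smaller than $\varepsilon/2$, while for $j>N$ each quotient is bounded by $L(P_\alpha)\leq C_mM$, so the tail contribution is at most $\varepsilon/2$. Hence $p(P_\alpha)\to 0$. The main obstacle is precisely this uniform Lipschitz bound for $(P_\alpha)$: without the homogeneous polynomial estimate $L(P)\leq C_m\|P\|_\infty$ and the polynomial uniform boundedness principle, compact-open convergence alone would not activate the $\alpha_j\to 0$ device.
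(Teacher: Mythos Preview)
Your argument for $\tau_\gamma\supseteq\tau_0$ is exactly the paper's. For the converse on $\mathcal P(^mX,Y)$, however, you take a genuinely different route. The paper establishes a direct seminorm estimate: starting from a $\tau_\gamma$-seminorm $p$, it pulls the factor $\alpha_j^{1/m}$ inside by homogeneity, expands $P(\alpha_j^{1/m}x_j)-P(\alpha_j^{1/m}y_j)$ through the symmetric multilinear form, observes that the resulting argument sequences are null (hence lie in compact sets $K_1,K_2\subset X$), and then applies the polarization formula to bound $p(P)$ by a constant times $\sup_{u\in C(K_1,K_2)}\|P(u)\|$ for an explicit compact set $C(K_1,K_2)$. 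This gives an explicit $\tau_0$-seminorm dominating $p$, with no completeness or Baire-category input.

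Your approach instead proves net continuity at $0$: from $P_\alpha\to 0$ pointwise you invoke the polynomial uniform boundedness principle (plus weak-to-norm boundedness in $Y$) to get $\sup_\alpha\|P_\alpha\|_\infty<\infty$, convert this to a uniform Lipschitz bound via polarization, and finish with a head/tail split using $\alpha_j\to 0$. This is correct and arguably more streamlined; it even shows the stronger statement that pointwise convergence on $B_X$ already implies $\tau_\gamma$-convergence on $\mathcal P(^mX,Y)$. The trade-off is that it relies on completeness of $X$ through the uniform boundedness principle and produces no explicit dominating $\tau_0$-seminorm, whereas the paper's algebraic manipulation is self-contained and yields a concrete compact set.
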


\begin{proof}
	If $K\subset B_X$ is a compact set, then $\delta(K)\subset \mathcal{G}_0(B_X)$ is compact. By Corollary \ref{cor: compacto en G0}, there exist sequences $(\alpha_j)\in c_0$ and $(x_j,y_j)\subset B_X\times B_X$ (with $\alpha_j>0$ and $x_j\not= y_j$ for all $j$) such that $\delta(K)\subset \overline\Gamma (\{\alpha_j m_{x_jy_j}\})$. Hence, for all $f\in \mathcal HL_0(B_X,Y)$,
	$$
	\sup_{x\in K} \|f(x)\|\le \sup_j \alpha_j\frac{\|f(x_j)-f(y_j)\|}{\|x_j-y_j\|},
	$$ proving the first assertion.
	
	For the second statement, take a seminorm $p$ that generates $\tau_\gamma$: $p(f)=\sup_j \alpha_j\frac{\|f(x_j)-f(y_j)\|}{\|x_j-y_j\|}$,  with $(\alpha_j)\in c_0$, $(x_j,y_j)\subset B_X\times B_X$, $\alpha_j>0$ and $x_j\not= y_j$ for all $j$. For a homogeneous polynomial $P\in \mathcal P(^mX,Y)$ we have:
	
	\begin{eqnarray*}
		p(P) &=& \sup_j \alpha_j\frac{\|P(x_j)-P(y_j)\|}{\|x_j-y_j\|} = \sup_j \frac{\|P(\alpha_j^{1/m}x_j)-P(\alpha_j^{1/m}y_j)\|}{\|x_j-y_j\|}\\
		&=& \sup_j \frac{\left\|\sum_{k=1}^m \binom{m}{k}\widecheck{P}\left((\alpha_j^{1/m}(x_j-y_j))^k,  (\alpha_j^{1/m}y_j)^{m-k}\right)\right\|}{\|x_j-y_j\|}\\
		&=& \sup_j \left\|\sum_{k=1}^m \binom{m}{k}\widecheck{P}\left(\left(\frac{\alpha_j^{1/m}(x_j-y_j)}{\|x_j-y_j\|^{1/k}}\right)^k,  (\alpha_j^{1/m}y_j)^{m-k}\right)\right\|.
	\end{eqnarray*}
	Note that there exist compact sets $K_1$ and $K_2$ in $X$ such that $\left\{\alpha_j^{1/m}\frac{(x_j-y_j)}{\|x_j-y_j\|^{1/k}}\right\}\subset K_1$ and $\left\{\alpha_j^{1/m}y_j\right\}\subset K_2$ (since both sequences go to 0). Then,
	$$
	p(P)\le \sum_{k=1}^m \binom{m}{k} \sup_{a\in K_1, b\in K_2}\|\widecheck{P}(a^k,b^{m-k})\|.
	$$ Using the polarization formula, for each $k\in\{1,\dots, m\}$,
	$$
	\widecheck{P}(a^k,b^{m-k})=\frac{1}{2^m m!}\sum_{\varepsilon_i=\pm 1}\varepsilon_1\cdots \varepsilon_m P\left(\left(\sum_{i=1}^k \varepsilon_i\right)a+\left(\sum_{i=k+1}^m \varepsilon_i\right)b\right).
	$$
	Taking into account that the following set is compact
	$$
	C(K_1,K_2)=\left\{\left(\sum_{i=1}^k \varepsilon_i\right)a+\left(\sum_{i=k+1}^m \varepsilon_i\right)b:\ a\in K_1, b\in K_2, k\in\{1,\dots, m\}, \varepsilon_i=\pm 1\right\},
	$$ and that
	$$
	\sup_{a\in K_1, b\in K_2}\|\widecheck{P}(a^k,b^{m-k})\|\le \frac{1}{m!}\sup_{u\in C(K_1,K_2)}\|P(u)\|
	$$
	we derive the intended inequality:
	$$
	p(P)\le \frac{2^m-1}{m!}\sup_{u\in C(K_1,K_2)}\|P(u)\|.
	$$
\end{proof}

We can now combine all the pieces of our study of the topology $\tau_\gamma$ to obtain the following:

\begin{proposition} \label{prop: densidad AP}
	If $X$ has the AP, for a given $f\in \mathcal HL_0(B_X,Y)$ there exists a net $(P_\alpha)\subset \mathcal P_{f,0}(X,Y)$ such that $P_\alpha\overset{\tau_\gamma}{\to} f$.
\end{proposition}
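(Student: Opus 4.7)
The plan is a two-step approximation: first approximate $f$ in $\tau_\gamma$ by arbitrary polynomials in $\mathcal P_0(X,Y)$, then approximate each such polynomial in $\tau_\gamma$ by finite-type polynomials, and combine via transitivity of the $\tau_\gamma$-closure. For the first step I would invoke equation~\eqref{bola tau gamma}: since $\overline{B}_{\mathcal HL_0(B_X,Y)} = \overline{B_{\mathcal P_0(X,Y)}}^{\tau_\gamma}$, rescaling by $L(f)$ gives $\mathcal HL_0(B_X,Y) \subseteq \overline{\mathcal P_0(X,Y)}^{\tau_\gamma}$. Concretely, the Ces\`aro means $\sigma_m f$ from the proof of Proposition~\ref{Prop:bola unidd} are bounded in Lipschitz norm by $L(f)$ and converge pointwise to $f$, so the remark preceding \eqref{bola tau gamma} turns this into $\tau_\gamma$-convergence.

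For the second step, fix $P = P_1 + \cdots + P_n \in \mathcal P_0(X,Y)$ with each $P_k \in \mathcal P(^k X, Y)$, and use the AP of $X$ to pick a net $(T_\beta) \subset \mathcal L(X,X)$ of finite-rank operators with $T_\beta \to \mathrm{Id}_X$ uniformly on compact subsets of $X$. Each $T_\beta$ factors through the finite-dimensional subspace $V_\beta := T_\beta(X)$; hence $P_k \circ T_\beta = (P_k|_{V_\beta}) \circ T_\beta$ is a $k$-homogeneous polynomial factoring through $V_\beta$ into $Y$, and a polarization argument on the finite-dimensional space $V_\beta$ shows that such a polynomial is necessarily of finite type on $X$. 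Consequently $P \circ T_\beta \in \mathcal P_{f,0}(X,Y)$ for every $\beta$.

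It remains to show $P \circ T_\beta \to P$ in $\tau_\gamma$, which by linearity reduces to handling each homogeneous summand separately. By Proposition~\ref{prop: equivalent topologies}, $\tau_\gamma$ and $\tau_0$ coincide on $\mathcal P(^k X,Y)$, so it suffices to check that $P_k \circ T_\beta \to P_k$ uniformly on every compact $K \subset B_X$. Since $T_\beta \to \mathrm{Id}_X$ uniformly on $K$, the set $K \cup \bigcup_{\beta \ge \beta_0} T_\beta(K)$ is bounded for some $\beta_0$, and uniform continuity of $P_k$ on bounded sets then yields $\sup_{x \in K}\|P_k(T_\beta x) - P_k(x)\| \to 0$. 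Combining the two closures produces the desired net in $\mathcal P_{f,0}(X,Y)$ converging to $f$ in $\tau_\gamma$.

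The main obstacle is that the AP provides no norm bound on $(T_\beta)$, so one cannot mimic the proof of Theorem~\ref{theor: MAP}, which used that a bounded pointwise-convergent net automatically converges in $\tau_\gamma$. The role of the topology $\tau_\gamma$, together with Proposition~\ref{prop: equivalent topologies}, is precisely to circumvent this: once the composition $P_k \circ T_\beta$ is viewed inside the \emph{single} space $\mathcal P(^k X, Y)$, the topologies $\tau_\gamma$ and $\tau_0$ coincide, and uniform convergence on compacta follows just from the continuity of $P_k$ on bounded sets, with no need to control $\|T_\beta\|$.
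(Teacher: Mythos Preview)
Your proposal is correct and follows essentially the same approach as the paper: reduce via \eqref{bola tau gamma} to approximating polynomials, then use the AP of $X$ to compose each homogeneous component with finite-rank operators and invoke Proposition~\ref{prop: equivalent topologies} to upgrade $\tau_0$-convergence to $\tau_\gamma$-convergence. The paper cites \cite[Lem.~5.3]{Mu} for the finite-type approximation of a homogeneous polynomial, whereas you spell out the argument directly (including why $P_k\circ T_\beta$ is of finite type and why $P_k\circ T_\beta\to P_k$ uniformly on compacta without a norm bound on $T_\beta$), but the underlying strategy is the same.
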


\begin{proof}
	It is enough to consider $f\in \overline B_{\mathcal HL_0(B_X,Y)}$. Moreover, taking into account the equality \eqref{bola tau gamma} we  just need to prove the result for each homogeneous polynomial $P\in\mathcal P(^mX,Y)$ (for any $m$). Applying \cite[Lem.~5.3]{Mu} (or  composing the polynomial with the approximations of the identity supplied by the AP of $X$) we obtain a net $(P_\alpha)\subset \mathcal P_{f,0}(X,Y)$ such that $P_\alpha\overset{\tau_0}{\to} P$. Now,  Proposition \ref{prop: equivalent topologies} implies that $P_\alpha\overset{\tau_\gamma}{\to} P$, which finishes the proof.
\end{proof}

Finally, we are in the position of proving the announced result:

\begin{theorem}\label{teo: equivalencia AP}
	$X$ has the AP if and only if $\mathcal{G}_0(B_X)$ has the AP.
\end{theorem}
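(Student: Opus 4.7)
The proof will be an easy echo of Mujica's argument for $\mathcal{G}^\infty(B_X)$, made possible by the fact that the topology $\tau_\gamma$ of Theorem~\ref{teo: top tau delta} has already been set up precisely so that density of finite type polynomials translates into density of finite rank linearizations in the compact--open topology.

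\textbf{The easy direction.} If $\mathcal{G}_0(B_X)$ has the AP, then $X$ has the AP: indeed, by Proposition~\ref{prop: X 1-complemented}, $X$ is isometric to a $1$-complemented subspace of $\mathcal{G}_0(B_X)$, and the AP passes to complemented subspaces.

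\textbf{The nontrivial direction.} Assume $X$ has the AP. The plan is to show that the identity $I_{\mathcal{G}_0(B_X)}$ can be approximated by finite rank operators in the compact--open topology $\tau_0$ on $\mathcal{L}(\mathcal{G}_0(B_X),\mathcal{G}_0(B_X))$. First, I would observe that under the linearization isomorphism of Proposition~\ref{prop:diagram}(c), the canonical map $\delta \in \mathcal{HL}_0(B_X,\mathcal{G}_0(B_X))$ corresponds to $T_\delta = I_{\mathcal{G}_0(B_X)}$. Applying Proposition~\ref{prop: densidad AP} to $\delta$, I obtain a net $(P_\alpha) \subset \mathcal{P}_{f,0}(X,\mathcal{G}_0(B_X))$ with $P_\alpha \overset{\tau_\gamma}{\to} \delta$. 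By Theorem~\ref{teo: top tau delta}, this is equivalent to $T_{P_\alpha} \overset{\tau_0}{\to} I_{\mathcal{G}_0(B_X)}$ in $\mathcal{L}(\mathcal{G}_0(B_X),\mathcal{G}_0(B_X))$.

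\textbf{Why each $T_{P_\alpha}$ has finite rank.} This is the small verification on which the argument hinges. Each $P_\alpha$ is a finite sum of finite type homogeneous polynomials of the form $x \mapsto [x^*(x)]^m y$, so the image $P_\alpha(B_X)$ lies in a finite-dimensional subspace $F_\alpha \subset \mathcal{G}_0(B_X)$. Since $T_{P_\alpha}$ is linear and continuous, its range is contained in $\overline{T_{P_\alpha}(\mathrm{span}\{\delta(x):x \in B_X\})} \subset \overline{F_\alpha} = F_\alpha$, hence $T_{P_\alpha}$ is of finite rank. Thus $I_{\mathcal{G}_0(B_X)}$ lies in the closure of finite rank operators for the compact--open topology, which is precisely the statement that $\mathcal{G}_0(B_X)$ has the AP.

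I do not expect any real obstacle: all the heavy lifting has been done in Proposition~\ref{Prop:bola unidad}, Corollary~\ref{cor: compacto en G0}, Theorem~\ref{teo: top tau delta}, Proposition~\ref{prop: equivalent topologies} and Proposition~\ref{prop: densidad AP}. The only care needed is to identify $I_{\mathcal{G}_0(B_X)}$ with $\delta$ via the canonical linearization and to note that finite type polynomials produce finite rank linearizations, after which the equivalence between $\tau_\gamma$ on $\mathcal{HL}_0(B_X,\mathcal{G}_0(B_X))$ and $\tau_0$ on the corresponding operator space does the rest.
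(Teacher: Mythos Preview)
Your proposal is correct and follows essentially the same path as the paper's proof: both use Proposition~\ref{prop: X 1-complemented} for the easy direction, and for the other direction both apply Proposition~\ref{prop: densidad AP} to $\delta$ and then transfer the $\tau_\gamma$-convergence $P_\alpha\to\delta$ to the $\tau_0$-convergence $T_{P_\alpha}\to Id$ via Theorem~\ref{teo: top tau delta}. Your explicit verification that each $T_{P_\alpha}$ has finite rank is a welcome detail that the paper leaves implicit.
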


\begin{proof}
	One implication is clear because $X$ is isometric to a complemented subspace of $\mathcal{G}_0(B_X)$.
	
	For the other, take $\delta\in  \mathcal HL_0(B_X,\mathcal{G}_0(B_X))$. By Proposition \ref{prop: densidad AP} there exists a net $(P_\alpha)\subset \mathcal P_{f,0}(X,\mathcal{G}_0(B_X))$ such that $P_\alpha\overset{\tau_\gamma}{\to} \delta$. By the linearization process, appealing to the isomorphism \eqref{tau gamma}, we obtain that $(T_{P_\alpha})\subset \mathcal L(\mathcal{G}_0(B_X),\mathcal{G}_0(B_X))$ is a net of finite rank linear mappings satisfying $T_{P_\alpha}\overset{\tau_0}{\to} Id$.
\end{proof}

\begin{remark}
	
	With the same procedure as at the beginning of the previous section we can produce a canonical predual $\mathcal{G}(B_X)$ of $\mathcal HL(B_X)$ made up of elements of $\mathcal HL(B_X)^*$ which are $\tau_0$-continuous when restricted to the closed unit ball. The fact that $\mathcal HL_0(B_X)$ is a 1-complemented subspace of $\mathcal HL(B_X)$ and that the projection from $\mathcal HL(B_X)$ onto $\mathcal HL_0(B_X)$ is $\tau_0-\tau_0$ continuous allow us to derive that $\mathcal{G}_0(B_X)$ is isometric to a 1-complemented subspace of $\mathcal{G}(B_X)$.
	
	With standard adaptations most of the results of this and the previous sections can be stated for $\mathcal{G}(B_X)$ instead of $\mathcal{G}_0(B_X)$. That is the case of Propositions~\ref{prop:diagram}, \ref{prop: X 1-complemented}, \ref{Prop:bola unidad} and Theorem~\ref{theor: MAP}. The version of Proposition \ref{prop:capsula convexa} for $\mathcal{G}(B_X)$ requires the addition of $\delta(0)$ to both considered sets. This addition has  impact in Corollary \ref{cor: compacto en G0} and Theorem \ref{teo: top tau delta}, which in turn affects the proofs of Propositions \ref{prop: equivalent topologies} and \ref{prop: densidad AP} and Theorem \ref{teo: equivalencia AP}. All these results are valid for $\mathcal{G}(B_X)$ after the mentioned modifications. Alternatively, this also follows from the fact that $\mathcal G(B_X)$ is isometric to a $1$-complemented subspace of $\mathcal G_0(B_{X\oplus_1\mathbb C})$ (just note that the map $\Phi$ in Proposition \ref{prop: G vs G_0} is the adjoint of the linearization $T_{F}$ of the map $F(x,\lambda)=\delta(x)+(\lambda-1)\delta(0)$).
	Also note that the square diagram~\eqref{diagrama cuadrado} can be made for $\mathcal{G}(B_X)$ but there is no equality between the norms of $T_{\delta_Y\circ f}$ and $f$.
\end{remark}

\section{Relation between $\mathcal{G}_0(B_X)$ and $\mathcal{G}_0(B_Y)$ when $X\subset Y$} \label{section: relation}

Recall that, given metric spaces $M, N$ with $0\in M\subset N$, the (real) Lipschitz-free space $\mathcal F(M)$ canonically identifies with a subspace of $\mathcal F(N)$. This follows from the McShane extension theorem asserting that for every $f\in \Lip_0(M, \mathbb R)$ there is $\tilde{f}\in \Lip_0(N, \mathbb R)$ with $\tilde{f}|_M=f$ and $L(f)=L(\tilde{f})$, see e.g. \cite[Th. 1.33]{Weaver}. Note in passing that in the complex-valued case all extensions can have a larger Lipschitz constant.
This is why our next goal is to analyze the corresponding relation between $\mathcal G_0(B_X)$ and $\mathcal G_0(B_Y)$ when $X\subset Y$. Then $B_X\subset B_Y$ and the restriction mapping has norm one:
\begin{align*}
	\mathcal HL_0(B_Y) & \to \mathcal HL_0(B_X)\\
	f & \mapsto f|_{B_X}.
\end{align*}

Then, the following mapping also has norm one:
\begin{align*}
	\rho:  \mathcal G_0(B_X) & \to \mathcal G_0(B_Y)\\
	\varphi & \mapsto \widehat{\varphi},
\end{align*} where $\widehat{\varphi}(f)=\varphi (f|_{B_X})$.

Whenever $\rho$ is an isometry,  
we write $\mathcal G_0(B_X)\subset \mathcal G_0(B_Y)$. Then, by the Hahn-Banach theorem, every element of $\mathcal HL_0(B_X)$ would have a norm preserving extension to $\mathcal HL_0(B_Y)$. Since there exist polynomials which cannot be extended to a larger space it is not always true that $\mathcal{G}_0(B_X)\subset \mathcal{G}_0(B_Y)$. Moreover, the previous argument can be clearly reversed, so: $\mathcal{G}_0(B_X)\subset \mathcal{G}_0(B_Y)$ if and only if every  $f\in \mathcal HL_0(B_X)$  has a norm preserving extension to $\mathcal HL_0(B_Y)$.

We study some cases where this norm preserving extension occurs. All are cases where we have an extension morphism. The simplest occurs when $X$ is 1-complemented in $Y$. Here, the complementation also spreads to  $\mathcal{G}_0(B_X)$.

\begin{proposition}\label{prop:1comp}
	If $X$ is 1-complemented in $Y$ then $\rho$ is an isometry and $\mathcal{G}_0(B_X)$ is a 1-complemented subspace of $\mathcal{G}_0(B_Y)$.
\end{proposition}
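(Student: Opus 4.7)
The plan is to exploit the universal property of $\mathcal{G}_0$ (Proposition~\ref{prop:diagram}(c)) applied to the projection $P\colon Y\to X$ in order to produce a left inverse (in fact, a norm-one left inverse) for $\rho$ at the level of the preduals. Let $P\colon Y\to X$ be a norm-one linear projection onto $X$. Since $\|P\|=1$, $P$ maps $B_Y$ into $B_X$, and its restriction $P|_{B_Y}$ belongs to $\mathcal{HL}_0(B_Y,X)$ with $L(P|_{B_Y})\le 1$. Composing with $\delta_X\colon B_X\to \mathcal{G}_0(B_X)$ gives a holomorphic Lipschitz map $\delta_X\circ P\colon B_Y\to \mathcal{G}_0(B_X)$ of Lipschitz constant at most $1$. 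I would then invoke the linearization property to obtain a linear contraction
\[
\pi:=T_{\delta_X\circ P}\colon \mathcal{G}_0(B_Y)\longrightarrow \mathcal{G}_0(B_X),
\qquad \pi(\delta_Y(y))=\delta_X(Py),\ y\in B_Y.
\]

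Next I would verify that $\pi\circ\rho=\mathrm{Id}_{\mathcal{G}_0(B_X)}$. By Proposition~\ref{prop:diagram}(b) it suffices to check this on the dense set $\{\delta_X(x):x\in B_X\}$: for $x\in B_X$ one has $Px=x$, hence
\[
\pi(\rho(\delta_X(x)))=\pi(\delta_Y(x))=\delta_X(Px)=\delta_X(x).
\]
Since both $\rho$ and $\pi$ have norm at most $1$, a standard two-line sandwich argument yields that $\rho$ is an isometry: for every $\varphi\in \mathcal{G}_0(B_X)$,
\[
\|\varphi\|=\|\pi(\rho(\varphi))\|\le \|\rho(\varphi)\|\le \|\varphi\|.
\]

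Finally, I would set $Q:=\rho\circ\pi\colon \mathcal{G}_0(B_Y)\to \mathcal{G}_0(B_Y)$. Then $\|Q\|\le 1$, $Q^2=\rho\circ(\pi\circ\rho)\circ\pi=\rho\circ\pi=Q$, and the range of $Q$ equals $\rho(\mathcal{G}_0(B_X))$. Hence $Q$ is a norm-one projection onto the (isometric) copy of $\mathcal{G}_0(B_X)$ inside $\mathcal{G}_0(B_Y)$, which is exactly the desired conclusion. I do not anticipate any serious obstacle: the entire argument is a direct application of the functorial properties of $\mathcal{G}_0$ already established, and the symmetric role of $\rho$ (induced by the inclusion) and $\pi$ (induced by the projection) means that $P|_X=\mathrm{Id}_X$ translates immediately into $\pi\circ\rho=\mathrm{Id}$.
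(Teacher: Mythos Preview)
Your argument is correct and follows essentially the same route as the paper: both build the same left inverse for $\rho$ out of the norm-one projection $P\colon Y\to X$ (the paper writes it as $\psi\mapsto[f\mapsto\psi(f\circ P)]$, you obtain it as $T_{\delta_X\circ P}$ via the universal property, but these coincide on each $\delta_Y(y)$ and hence agree), and both derive the isometry from the resulting sandwich $\|\varphi\|=\|\pi\rho\varphi\|\le\|\rho\varphi\|\le\|\varphi\|$. The only cosmetic difference is that you phrase the construction functorially through Proposition~\ref{prop:diagram}(c), which has the small advantage of making it automatic that $\pi$ lands in $\mathcal G_0(B_X)$ rather than merely in $\mathcal HL_0(B_X)^*$.
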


\begin{proof}
	Let $\pi\colon Y\to X$ be a norm-one projection. Given $f\in \mathcal HL_0(B_X)$ the mapping $f\circ\pi$ belongs to $\mathcal HL_0(B_Y)$ with $L(f\circ\pi)\le L(f)$ and $(f\circ\pi)|_{B_X}=f$. Now, for each $\varphi\in\mathcal{G}_0(B_X)$,
	$$
	\|\varphi\|=\sup_{f\in B_{\mathcal HL_0(B_X)}} |\varphi(f)|  =  \sup_{f\in B_{\mathcal HL_0(B_X)}} |\widehat{\varphi}(f\circ\pi)|\le\|\widehat\varphi\|.
	$$ Thus, $\|\varphi\|=\|\widehat\varphi\|$, meaning that $\rho$ is an isometry. Finally, we derive that $\mathcal{G}_0(B_X)$ is 1-complemented in $\mathcal{G}_0(B_Y)$ through the following projection:
	\begin{align*}
		\mathcal G_0(B_Y) & \to \mathcal G_0(B_X)\\
		\psi & \mapsto [f\mapsto \psi(f\circ\pi)].
	\end{align*}
\end{proof}

M. Jung has proved recently that $\mathcal G^\infty(B_X)$ does not have  the Radon-Nikodym property (RNP) for any $X$ \cite{Jung}. Here we obtain the same result for $\mathcal G_0(B_X)$.

\begin{corollary} \label{cor:RNP}
	The space $\mathcal G_0(B_X)$ fails to have the Radon-Nikodym Property for every complex Banach space $X$.
\end{corollary}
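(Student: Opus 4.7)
The plan is to reduce the statement to the one-dimensional case $X=\mathbb{C}$ and then invoke Jung's theorem together with Proposition~\ref{prop: quotient}(b). The key observation is that every nonzero complex Banach space $X$ contains an isometric 1-complemented copy of $\mathbb{C}$: picking any $x_0\in S_X$ and any Hahn--Banach functional $x^\ast\in S_{X^\ast}$ with $x^\ast(x_0)=1$, the rank-one map $P(x)=x^\ast(x)x_0$ is a norm-one projection onto $\mathrm{span}(x_0)\equiv\mathbb{C}$. Applying Proposition~\ref{prop:1comp} then shows that $\mathcal{G}_0(\mathbb{D})=\mathcal{G}_0(B_\mathbb{C})$ sits isometrically and 1-complementedly inside $\mathcal{G}_0(B_X)$.

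Next, I would recall from Proposition~\ref{prop: quotient}(b) that in the one-dimensional case the map $\Psi$ is an isometric isomorphism, so that
\[
\mathcal{G}_0(\mathbb{D})\;\equiv\;\mathcal{G}^\infty(\mathbb{D}).
\]
By Jung's theorem \cite{Jung} the space $\mathcal{G}^\infty(B_X)$ fails the Radon--Nikodym Property for every complex Banach space, and in particular $\mathcal{G}^\infty(\mathbb{D})$ fails the RNP; hence so does $\mathcal{G}_0(\mathbb{D})$.

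Finally, since the Radon--Nikodym Property is hereditary (passing to arbitrary closed subspaces, and in particular to complemented ones), the presence of an isometric copy of $\mathcal{G}_0(\mathbb{D})$ inside $\mathcal{G}_0(B_X)$ forces $\mathcal{G}_0(B_X)$ itself to fail the RNP. The main (minor) subtlety is ensuring the chain of identifications genuinely yields a closed subspace on which the RNP is inherited; this is immediate because Proposition~\ref{prop:1comp} produces an isometric embedding, so no extra work is required beyond citing the hereditary nature of the RNP.
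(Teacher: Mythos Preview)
Your proof is correct and structurally identical to the paper's: reduce to $\mathbb{C}$ via Proposition~\ref{prop:1comp}, identify $\mathcal{G}_0(\mathbb{D})\equiv\mathcal{G}^\infty(\mathbb{D})$ via Proposition~\ref{prop: quotient}(b), and conclude by heredity of the RNP. The only difference is that the paper justifies the failure of RNP for $\mathcal{G}^\infty(\mathbb{D})$ via Ando's elementary observation that its closed unit ball has no extreme points, rather than invoking Jung's more general (and heavier) theorem as you do.
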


\begin{proof}
	The space $\mathcal G^\infty(\mathbb D)$ fails to have the RNP since its the unit ball does not have extreme points \cite{Ando}. Thus, by the isometry presented in Proposition \ref{prop: quotient}, the same holds for $\mathcal G_0(\mathbb D)$. Since $\mathbb C$ is $1$-complemented in $X$, Proposition \ref{prop:1comp} yields that $\mathcal  G_0(\mathbb D)$ is a subspace of $\mathcal G_0(B_X)$ and we are done.
\end{proof}

Another situation when we have an extension morphism is when $Y=X^{**}$. Recall that, given $f\in \Hinf(B_X)$, we can consider its AB extension $\tilde{f}\in \Hinf(B_{X^{**}})$ \cite{AB}. The AB extension, which defines an isometry from $\Hinf(B_X)$ to $\Hinf(B_{X^{**}})$ \cite{DG}, is a topic widely developed in the literature. For instance, it is essential in the description of the spectrum (or maximal ideal space) of the Banach algebra $\Hinf(B_X)$. Another ingredient that usually appears associated with the AB extension and its properties is the notion of \emph{symmetrically regular space}. Both these concepts  have their origin in the study initiated by Arens \cite{Arens1, Arens2} about extending the product of a Banach algebra to its bidual.

For an $n$-linear mapping $A:X\times\cdots\times X\to Y$ the canonical extension $\widetilde A:X^{**}\times\cdots\times X^{**}\to Y^{**}$ is given by consecutive weak-star convergence in the following way:
$$
\widetilde A(x^{**}_1,\dots, x^{**}_n)(y^{*})=
\lim_{\alpha_1}\dots \lim_{\alpha_n} y^*(A(x_{\alpha_1},\dots, x_{\alpha_n}))
$$
where each $(x_{\alpha_i})\subset X$ is a net which is weak-star convergent to $x_i^{**}$ and $y^{*}\in Y^{*}$. Now, the AB extension of a homogeneous polynomial $P\in\mathcal P(^nX,Y)$ is given by $\widetilde P\in\mathcal P(^nX^{**},Y^{**})$ which is  defined, for $x^{**}\in X^{**}$, in the expected way:
$$
\widetilde P(x^{**})=\widetilde{\widecheck{P}}(x^{**},\dots, x^{**}).
$$
This  provides a way to extend bounded holomorphic functions $f\in\mathcal H^\infty(B_X,Y)\leadsto \widetilde f\in \mathcal H^\infty(B_{X^{**}},Y^{**})$ and  we know from \cite{DG} that this extension is an isometry: $\|f\|=\|\widetilde f\|$.

Recall that
$X$ is said to be {\em regular} if every continuous bilinear mapping $A : X \times X \to \mathbb{C}$ is Arens regular.
That is, the following two extensions of $A$  to $X^{**} \times X^{**} \to \mathbb{C}$ coincide:
$$\lim_\alpha \lim_\beta A(x_\alpha,y_\beta)\quad \textrm{ and } \quad
\lim_\beta \lim_\alpha A(x_{\alpha},y_{\beta}),$$
where $(x_\alpha)$ and $(y_\beta)$ are nets in $X$ converging weak-star to points $x_0^{**}$  and $y_0^{**}$
in $X^{**}$. The space
$X$ is {\em symmetrically regular} if the above holds for every continuous symmetric bilinear form. Equivalently, $X$ is (symmetrically) regular if any continuous (symmetric) linear mapping $T:X\to X^*$ is weakly compact.
Several equivalent characterizations of this notion can be seen in \cite[Th. 8.3]{acg} and some interesting properties appeared in \cite[Section 1]{AGGM}. As examples of non reflexive regular (and hence, symmetrically regular) Banach spaces we have, for instance, those that satisfy property (V) of Pe{\l}czy\'{n}ski, like $c_0$, $C(K)$ or $\mathcal H^\infty(\D)$ while typical non symmetrically regular spaces are $\ell_1$ and $X\oplus X^*$, for any non reflexive space $X$.
Also,  Leung \cite[Th. 12]{Leung} provided an example of a symmetrically regular space that is not regular and in \cite{AGGM} it is showed that $c_0(\ell_1^n)$ is regular but its bidual $\ell_\infty(\ell_1^n)$ is not symmetrically regular.

We now want to work with the AB extension for elements in $\mathcal HL_0(B_X)$. For $f\in \mathcal HL_0(B_X)$, in order to compute the Lipschitz constant of $\tilde{f}$ we need to deal with the differential of the AB extension, $d\widetilde{f}$ which belongs to $\mathcal H(B_{X^{**}}, X^{***})$.  Instead, we do know the norm of the AB extension of the differential $\reallywidetilde{df}\in\Hinf (B_{X^{**}}, X^{***})$. Fortunately, on symmetrically regular spaces they coincide:

\begin{proposition}
	If $X$ is symmetrically regular and $f\in \mathcal HL_0(B_X)$ then $d\widetilde{f}=\reallywidetilde{df}$.
\end{proposition}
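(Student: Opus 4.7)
My plan is to reduce the equality $d\widetilde{f}=\widetilde{df}$ to the case of a single homogeneous polynomial via Taylor series, and then to perform a direct multilinear computation in which symmetric regularity collapses nested weak-star limits into one symmetric form. First I would expand $f=\sum_{m\geq 1}P_m$ at $0$ with $P_m\in\mathcal P(^m X)$, so that the Taylor series of the $X^*$-valued map $df$ is $df=\sum_{m\geq 1}dP_m$. The Davie--Gamelin construction of the Aron--Berner extension gives $\widetilde{f}=\sum_{m\geq 1}\widetilde{P_m}$ on $B_{X^{**}}$ and, applied to $df\in\mathcal H^\infty(B_X,X^*)$, gives $\widetilde{df}=\sum_{m\geq 1}\widetilde{dP_m}$; both series converge uniformly on $rB_{X^{**}}$ for $r<1$. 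Termwise differentiation of the former yields $d\widetilde{f}=\sum_{m\geq 1}d\widetilde{P_m}$, so it suffices to prove $d\widetilde{P}=\widetilde{dP}$ for each $P\in\mathcal P(^m X)$.

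Fix such a $P$ with symmetric $m$-linear form $\check{P}\colon X^m\to\C$, and let $\widetilde{\check{P}}\colon (X^{**})^m\to\C$ denote its scalar Aron--Berner extension, obtained by an iterated weak-star limit in some fixed order. Symmetric regularity enters here: by standard consequences of the bilinear condition for $m$-linear symmetric forms (see e.g.\ \cite[Sec.~8]{acg}), it forces $\widetilde{\check{P}}$ to be symmetric and independent of the order of limits. Expanding $\widetilde{P}(x^{**}+ty^{**})=\widetilde{\check{P}}(x^{**}+ty^{**},\ldots,x^{**}+ty^{**})$ by multilinearity, reading off the coefficient of $t$, and appealing to symmetry gives
\begin{equation*}
d\widetilde{P}(x^{**})(y^{**}) = m\,\widetilde{\check{P}}(x^{**},\ldots,x^{**},y^{**}).
\end{equation*}
For the other side I would use the standard description of the vector-valued Aron--Berner extension, $\widetilde{dP}(x^{**})(y^{**})=\widetilde{(y^{**}\circ dP)}(x^{**})$, where $y^{**}\in X^{**}=(X^*)^*$ is paired with the $X^*$-valued polynomial $dP$. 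Unwinding using $\check{Q}(x_1,\ldots,x_{m-1})(y)=m\check{P}(x_1,\ldots,x_{m-1},y)$ produces once again $m\,\widetilde{\check{P}}(x^{**},\ldots,x^{**},y^{**})$, this time with $y^{**}$ arising from an innermost weak-star limit, and by symmetry of $\widetilde{\check{P}}$ the two expressions coincide.

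The main obstacle will be the bookkeeping of the different orders of weak-star limits that appear in the two sides. In $d\widetilde{P}(x^{**})(y^{**})$ the variable $y^{**}$ shows up in each of the $m$ coordinate slots with the ordering of limits inherited from the scalar extension, whereas in $\widetilde{dP}(x^{**})(y^{**})$ it is produced by one extra innermost limit dictated by the definition of the vector-valued AB extension. Without any regularity hypothesis these orderings give genuinely different scalars; the symmetry of $\widetilde{\check{P}}$ provided by symmetric regularity of $X$ is exactly what allows them to be identified. A smaller technical point will be to justify the termwise differentiation of $\widetilde{f}=\sum_m \widetilde{P_m}$, which follows from Cauchy estimates together with the fact that the Aron--Berner extension preserves the norm of each $P_m$.
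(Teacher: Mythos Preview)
Your proposal is correct and follows essentially the same route as the paper: reduce to homogeneous polynomials via the Taylor expansion and termwise differentiation, then identify both $d\widetilde{P}(x^{**})(y^{**})$ and $\widetilde{dP}(x^{**})(y^{**})$ with $m\,\widetilde{\widecheck{P}}(x^{**},\ldots,x^{**},y^{**})$, using symmetric regularity (via \cite[Th.~8.3]{acg}) to reconcile the two orderings of the weak-star limits. The paper phrases the key step as the identity $\widetilde{\widecheck{P}}=\widecheck{\widetilde{P}}$ rather than as symmetry of $\widetilde{\widecheck{P}}$, but this is the same content.
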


\begin{proof}
	If  $f=\sum_{k=0}^\infty P^kf(0)$ then the series expansion of $df$ at $0$ is given by $df=\sum_{k=0}^\infty dP^kf(0)$. Thus, $\reallywidetilde{df}=\sum_{k=0}^\infty \reallywidetilde{(dP^kf(0))}$. On the other hand, $\widetilde{f}=\sum_{k=0}^\infty \widetilde{P^kf(0)}$ and so $d\widetilde{f}=\sum_{k=0}^\infty d(\widetilde{P^kf(0)})$.
	
	Therefore, the result is proved once we show that for any given $m\in \mathbb N$ and any $P\in\mathcal P(^mX)$,  $\widetilde{dP}=d\widetilde{P}$. Note that in this case $\widetilde P\in\mathcal P(^mX^{**})$, $dP\in\mathcal P(^{m-1}X,X^*)$ while both $\widetilde{dP}$ and $d\widetilde P$ belong to $\mathcal P(^{m-1}X^{**},X^{***})$.
	
	When $X$ is symmetrically regular, it follows from \cite[Th. 8.3]{acg} that $\widetilde{\widecheck{P}}= \widecheck{\widetilde{P}}$.  The argument is now complete because, for each $x^{**}, y^{**}\in X^{**}$ we have
	$\widetilde{dP}(x^{**})(y^{**})= m \widetilde{\widecheck{P}} (x^{**},\dots, x^{**}, y^{**})$ and $d\widetilde{P}(x^{**})(y^{**})= m \widecheck{\widetilde{P}} (x^{**},\dots, x^{**}, y^{**})$.
\end{proof}

\begin{proposition} \label{prop:symmetrically regular}
	If $X$ is symmetrically regular then the AB extension mapping
	\begin{align*}
		E:  \mathcal HL_0(B_X) & \to \mathcal HL_0(B_{X^{**}})\\
		f & \mapsto \widetilde{f}
	\end{align*}  is an isometry.
\end{proposition}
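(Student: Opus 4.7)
The plan is to assemble the result from three ingredients already in hand: the representation $L(f)=\|df\|_\infty$ coming from Proposition~\ref{prop: HL0}, the identity $d\widetilde{f}=\reallywidetilde{df}$ proved in the preceding proposition (which is precisely where symmetric regularity of $X$ is used), and the fact from \cite{DG} that the Aron--Berner extension is an isometry in the supremum norm on $\mathcal H^\infty$.

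First I would check that $E$ is well-defined, i.e.\ that $\widetilde{f}\in\mathcal{HL}_0(B_{X^{**}})$ whenever $f\in\mathcal{HL}_0(B_X)$. Clearly $\widetilde{f}\in\mathcal H^\infty(B_{X^{**}})$ with $\widetilde{f}(0)=f(0)=0$. By Proposition~\ref{prop: HL0}, $f\in\mathcal{HL}_0(B_X)$ gives $df\in\mathcal H^\infty(B_X,X^*)$, so its Aron--Berner extension $\reallywidetilde{df}$ belongs to $\mathcal H^\infty(B_{X^{**}},X^{***})$. Invoking the previous proposition (this is where symmetric regularity of $X$ enters), $d\widetilde{f}=\reallywidetilde{df}\in\mathcal H^\infty(B_{X^{**}},\mathcal L(X^{**},\C))$, so by the converse direction of Proposition~\ref{prop: HL0} we get $\widetilde{f}\in\mathcal{HL}_0(B_{X^{**}})$.

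The norm computation is then a short chain:
\[
L(\widetilde{f})\;=\;\|d\widetilde{f}\|_\infty\;=\;\|\reallywidetilde{df}\|_\infty\;=\;\|df\|_\infty\;=\;L(f),
\]
where the first and last equalities are Proposition~\ref{prop: HL0} (applied on $B_{X^{**}}$ and on $B_X$ respectively), the second is the preceding proposition, and the third is the Davie--Gamelin isometry property of the Aron--Berner extension recalled just before the statement. Linearity of $E$ is immediate from the construction of the Aron--Berner extension (the canonical extension of a multilinear map is linear in its coefficients and in the Taylor polynomials of $f$), so $E$ is an into isometry, as required.

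There is no genuinely hard step here: the entire content has been loaded into the two preparatory propositions, and the one thing worth double-checking is that the two notions of ``extension'' really line up, namely that the derivative of the Aron--Berner extension of $f$ (an element of $\mathcal H^\infty(B_{X^{**}},X^{***})$) is the same object as the Aron--Berner extension of $df$ (also in $\mathcal H^\infty(B_{X^{**}},X^{***})$). This is precisely the identity $d\widetilde{f}=\reallywidetilde{df}$, and it is exactly the place where symmetric regularity is indispensable, since without it the two iterated weak-star limits defining $\widetilde{\widecheck{P}}$ and $\widecheck{\widetilde{P}}$ need not agree.
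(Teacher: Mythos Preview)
Your proposal is correct and follows essentially the same route as the paper: use $L(f)=\|df\|$ from Proposition~\ref{prop: HL0}, invoke the identity $d\widetilde{f}=\reallywidetilde{df}$ from the preceding proposition (the only place symmetric regularity enters), and then apply the Davie--Gamelin isometry $\|df\|=\|\reallywidetilde{df}\|$. The paper's proof is slightly terser, but the ingredients and the chain of equalities are identical.
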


\begin{proof}
	If $f\in \mathcal HL_0(B_X)$ then its norm is given by $\|df\|$. By \cite{DG}, $\|df\|=\|\reallywidetilde{df}\|$.
	Also, by the previous proposition we know that $d\widetilde{f}=\reallywidetilde{df}$. So, we obtain that $\|df\|=\|d\widetilde{f}\|$, meaning that $\widetilde f$ does indeed belong to $\mathcal HL_0(B_{X^{**}})$ and that the mapping $f\mapsto \widetilde f$ is an isometry.
\end{proof}

In the previous result  symmetric regularity is used to obtain that $d\widetilde{f}=\reallywidetilde{df}$. Actually we only need the identity of their norms: $\|d\widetilde{f}\|=\|\reallywidetilde{df}\|$. We do not know if this equality holds in general.

\begin{corollary}\label{cor:symmetrically regular}
	If $X$ is symmetrically regular then $\mathcal{G}_0(B_X)\subset \mathcal{G}_0(B_{X^{**}})$.
\end{corollary}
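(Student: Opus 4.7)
The plan is to combine the characterization established near the start of Section~\ref{section: relation} with Proposition~\ref{prop:symmetrically regular} in essentially one line, since the hard analytic work has already been done. Recall the characterization: the map $\rho$ is an isometry (equivalently, $\mathcal G_0(B_X)\subset \mathcal G_0(B_{X^{**}})$) if and only if every $f\in\mathcal HL_0(B_X)$ admits a norm-preserving extension to $\mathcal HL_0(B_{X^{**}})$. So it suffices to exhibit, for each $f\in\mathcal HL_0(B_X)$, an element of $\mathcal HL_0(B_{X^{**}})$ whose restriction to $B_X$ equals $f$ and whose $\mathcal HL_0$-norm coincides with that of $f$.

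First I would take the AB extension $\widetilde f\in\mathcal H^\infty(B_{X^{**}})$ as the candidate. By the general construction of the AB extension via term-by-term extension of the homogeneous polynomials in the Taylor series $f=\sum_{k\ge 1} P^kf(0)$ (note $f(0)=0$, so the $k=0$ term vanishes) and the fact that $\widetilde P(x)=P(x)$ for $x\in X$ and $P\in\mathcal P(^mX)$, one immediately gets $\widetilde f(x)=f(x)$ for all $x\in B_X$ and $\widetilde f(0)=0$; so $\widetilde f$ is an extension of $f$ vanishing at $0$.

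Next, I would invoke Proposition~\ref{prop:symmetrically regular}: since $X$ is symmetrically regular, the AB extension map $E\colon f\mapsto \widetilde f$ lands in $\mathcal HL_0(B_{X^{**}})$ and satisfies $L(\widetilde f)=L(f)$. Thus $\widetilde f$ is the sought norm-preserving extension, and the characterization gives $\mathcal G_0(B_X)\subset \mathcal G_0(B_{X^{**}})$.

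There is essentially no obstacle here, as all of the analytic content (the identification $d\widetilde f = \reallywidetilde{df}$ under symmetric regularity, the isometric character of the AB extension for $\mathcal H^\infty$, and the equivalent reformulation of $\rho$ being an isometry via Hahn-Banach type extension) has already been developed in the preceding propositions; the corollary is a direct dualization of Proposition~\ref{prop:symmetrically regular}.
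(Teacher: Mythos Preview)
Your proof is correct and follows exactly the approach the paper intends: the corollary is stated without proof as an immediate consequence of Proposition~\ref{prop:symmetrically regular} together with the equivalence ``$\mathcal G_0(B_X)\subset\mathcal G_0(B_Y)$ iff every $f\in\mathcal HL_0(B_X)$ has a norm-preserving extension to $\mathcal HL_0(B_Y)$'' established at the beginning of Section~\ref{section: relation}. Your explicit verification that $\widetilde f$ restricts to $f$ on $B_X$ and vanishes at $0$ simply spells out what the paper leaves implicit.
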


Note that in the above corollary the hypothesis of symmetric regularity is not a necessary condition since, for example, for $X=\ell_1$ the result holds due to Proposition  \ref{prop:1comp}.

A generalization of this procedure (which, however, uses the AB extension in its definition) is when there exists an isometric extension morphism $s\colon X^*\to Y^*$. This happens, for instance, when $X$ is an M-ideal in $Y$. More generally, if $X\subset Y$ then the existence of an isometric extension morphism $s\colon X^*\to Y^*$ is equivalent to $X^{**}$ being 1-complemented in $Y^{**}$. Actually, the existence of an isometric extension morphism $s\colon X^*\to Y^*$ is equivalent to $X$ being 1-locally complemented in $Y$ (see the definition in the next section and the comment before Corollary \ref{cor:1-complemented}). 

Note that $s(x^*)(x)=x^*(x)$ for all $x\in X$, $x^*\in X^*$ and that $\|s(x^*)\|=\|x^*\|$. This extension transfers to $\Hinf(B_X)$ in the following way:
\begin{align*}
	\overline{s}:  \Hinf(B_X) & \to \Hinf(B_{Y})\\
	f & \mapsto \widetilde{f}\circ s^*\circ i_Y,
\end{align*} where $i_Y:Y\to Y^{**}$ is the canonical inclusion.

The mapping $\overline{s}$ is an isometric extension from $ \Hinf(B_X)$ to $\Hinf(B_{X^{**}})$. Again, to work in $\mathcal HL_0(B_X)$ we require a symmetrically regular hypothesis.

\begin{proposition}\label{prop: extension s}
	If $X$ is symmetrically regular, $X\subset Y$ and there is an isometric extension morphism $s:X^*\to Y^*$  then
	\begin{align*}
		\overline{s}:  \mathcal HL_0(B_X) & \to \mathcal HL_0(B_{Y})\\
		f & \mapsto \widetilde{f}\circ s^*\circ i_Y
	\end{align*}  is an isometric extension.
\end{proposition}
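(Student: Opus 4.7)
The plan is to reduce the whole statement to Proposition \ref{prop:symmetrically regular} by carefully tracing the composition $\widetilde{f}\circ s^*\circ i_Y$ through each space and exploiting that each factor has norm at most one. I will verify four things: (i) $\overline{s}(f)$ is well defined on $B_Y$ with values in $\mathbb{C}$; (ii) $\overline{s}(f)$ extends $f$; (iii) $\overline{s}(f)\in\mathcal HL_0(B_Y)$; and (iv) the Lipschitz constants agree.

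For (i), the map $s^*\colon Y^{**}\to X^{**}$ is an adjoint of a norm-one operator, hence $\|s^*\|\le 1$, and $i_Y\colon Y\to Y^{**}$ is an isometry. Therefore $s^*\circ i_Y$ maps $B_Y$ into $B_{X^{**}}$, where $\widetilde{f}$ is defined and bounded. Since $s^*\circ i_Y$ is linear and $\widetilde{f}$ is holomorphic on $B_{X^{**}}$, the composition is holomorphic on $B_Y$, and plainly $\overline{s}(f)(0)=\widetilde{f}(0)=f(0)=0$. For (ii), given $x\in B_X$ and $x^*\in X^*$ we have
\[
s^*(i_Y(x))(x^*)=i_Y(x)(s(x^*))=s(x^*)(x)=x^*(x),
\]
using that $s$ is an extension morphism; hence $s^*(i_Y(x))=i_X(x)$. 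Since $\widetilde{f}$ restricted to $i_X(X)\cap B_{X^{**}}$ agrees with $f$, we obtain $\overline{s}(f)(x)=f(x)$ for every $x\in B_X$.

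For (iii)–(iv), the symmetric regularity hypothesis enters through Proposition \ref{prop:symmetrically regular}, which yields $\widetilde{f}\in\mathcal HL_0(B_{X^{**}})$ with $L(\widetilde{f})=L(f)$. For any $y_1,y_2\in B_Y$,
\[
|\overline{s}(f)(y_1)-\overline{s}(f)(y_2)|\le L(\widetilde{f})\,\|s^*(i_Y(y_1))-s^*(i_Y(y_2))\|\le L(f)\,\|y_1-y_2\|,
\]
so $\overline{s}(f)\in\mathcal HL_0(B_Y)$ and $L(\overline{s}(f))\le L(f)$. The reverse inequality follows immediately from the extension property proved in (ii):
\[
L(f)=L(\overline{s}(f)|_{B_X})\le L(\overline{s}(f)).
\]

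The only substantive issue is the step invoking Proposition \ref{prop:symmetrically regular}; symmetric regularity is essential there (to secure $d\widetilde f=\reallywidetilde{df}$, and hence $L(\widetilde f)=L(f)$), and without it one would lose the Lipschitz control on the AB extension and, through it, on $\overline{s}(f)$. All the remaining ingredients are formal bookkeeping about adjoints and the canonical embedding, with the bounds $\|s^*\|\le 1$ and $\|i_Y\|=1$ carrying the isometric bound in the easy direction.
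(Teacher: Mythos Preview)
Your proof is correct and, in fact, more streamlined than the paper's. The paper argues via the differential: it first shows, for homogeneous polynomials $P$, that $d(\overline{s}(P))=i_Y^*\circ s^{**}\circ d\widetilde{P}\circ s^*\circ i_Y$, passes this identity to general $f$, and then bounds $\|d(\overline{s}(f))\|$ above by $\|d\widetilde{f}\|=\|df\|$ using symmetric regularity; the reverse inequality is obtained from the identity $d(\overline{s}(f))(x)=s(df(x))$ for $x\in B_X$ together with the fact that $s$ is an isometry. You instead bypass the differential entirely: once Proposition~\ref{prop:symmetrically regular} gives $L(\widetilde{f})=L(f)$, the upper bound $L(\overline{s}(f))\le L(f)$ is an immediate Lipschitz composition estimate, and the lower bound follows from the extension property $\overline{s}(f)|_{B_X}=f$ (which you verify explicitly via $s^*\circ i_Y|_X=i_X$, a point the paper uses but does not isolate). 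Both approaches hinge on the same use of symmetric regularity; your route is more elementary, while the paper's yields the additional structural information that $d(\overline{s}(f))$ factors through $d\widetilde{f}$, which is useful elsewhere (e.g.\ in the dual-isometric result that follows).
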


\begin{proof}
	For any $P\in\mathcal P(^mX)$ we have that $\overline{s}(P)\in \mathcal P(^mY)$ and $d(\overline{s}(P))\in\mathcal P(^{m-1}Y, Y^*)$. Now, for $y,z\in B_Y$,
	\begin{align*}
		d(\overline{s}(P))(y)(z) &= m \widecheck{(\overline{s}(P))}(y,\dots, y, z) = m \widecheck{\widetilde P}\left(s^*(i_Y(y)),\dots, s^*(i_Y(y)),s^*(i_Y(z))\right)\\
		&=d\widetilde{P} (s^*(i_Y(y)))(s^*(i_Y(z))) = (i_Y^*\circ s^{**}\circ d\widetilde{P}\circ s^*\circ i_Y)(y)(z).
	\end{align*}
	
	This says that $d(\overline{s}(P))=i_Y^*\circ s^{**}\circ d\widetilde{P}\circ s^*\circ i_Y$ for every polynomial $P\in\mathcal P(^mX)$. Then, the same equality holds for every $f\in\mathcal HL_0(B_X)$:
	$$
	d(\overline{s}(f))=i_Y^*\circ s^{**}\circ d\widetilde{f}\circ s^*\circ i_Y.
	$$ Since $X$ is symmetrically regular, by Proposition \ref{prop:symmetrically regular} we obtain that $\|d(\overline{s}(f))\|\le \|d\widetilde{f}\|=\|df\|$. Also, note that for $x\in B_X$, we have $ s^*\circ i_Y(x)=i_X(x)$. This implies that $d\widetilde{f}(s^*(i_Y(x))=i_{X^*}(df(x))$. Therefore,
	$$
	d(\overline{s}(f))(x)=i_Y^*\circ s^{**}\left(i_{X^*}(df(x))\right) = s(df(x)).
	$$ This equality and the fact that $s$ is an isometry allow us to derive the other inequality:
	\begin{align*}
		\|d(\overline{s}(f))\| & \ge \sup_{x\in B_X} \|d(\overline{s}(f))(x)\| = \sup_{x\in B_X} \|s(df(x))\| \\
		&= \sup_{x\in B_X} \|df(x)\|=\|df\|,
	\end{align*}
	which concludes the proof.
\end{proof}

\begin{corollary}\label{cor:GXsubsetGY}
	If $X$ is symmetrically regular, $X\subset Y$ and there is an isometric extension morphism $s\colon X^*\to Y^*$  then $\mathcal{G}_0(B_X)\subset \mathcal{G}_0(B_Y)$.
\end{corollary}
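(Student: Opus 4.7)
The plan is to deduce the corollary almost immediately from Proposition \ref{prop: extension s}, combined with the elementary observation recorded at the start of Section \ref{section: relation} that $\mathcal{G}_0(B_X)\subset \mathcal{G}_0(B_Y)$ is equivalent to every $f\in \mathcal{H}L_0(B_X)$ admitting a norm-preserving extension to $\mathcal{H}L_0(B_Y)$. Thus the only real task is to verify that the map $\overline{s}$ produced by Proposition \ref{prop: extension s} is genuinely an \emph{extension} of each $f$, i.e. that $\overline{s}(f)|_{B_X}=f$; its norm-preserving character is already granted by that proposition.

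Concretely, I would first check the restriction identity: for $x\in B_X$ and any $x^\ast\in X^\ast$ we have $s(x^\ast)(x)=x^\ast(x)$ by definition of the extension morphism, which means $s^\ast(i_Y(x))=i_X(x)$ in $X^{\ast\ast}$. Since the AB extension $\widetilde{f}$ satisfies $\widetilde{f}\circ i_X=f$, it follows that
\[
\overline{s}(f)(x)=\widetilde{f}(s^\ast(i_Y(x)))=\widetilde{f}(i_X(x))=f(x),
\]
so $\overline{s}(f)|_{B_X}=f$. Together with $L(\overline{s}(f))=L(f)$ from Proposition \ref{prop: extension s}, this makes $\overline{s}(f)$ the desired norm-preserving extension.

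Second, I would conclude by mimicking the computation in Proposition \ref{prop:1comp}. For $\varphi\in \mathcal{G}_0(B_X)$ the canonical map $\rho\colon \varphi\mapsto \widehat{\varphi}$ has norm at most one, and in the other direction
\[
\|\varphi\|=\sup_{f\in B_{\mathcal{H}L_0(B_X)}}|\varphi(f)|=\sup_{f\in B_{\mathcal{H}L_0(B_X)}}|\widehat{\varphi}(\overline{s}(f))|\le \|\widehat{\varphi}\|,
\]
since $\overline{s}(f)\in B_{\mathcal{H}L_0(B_Y)}$ and $\widehat{\varphi}(\overline{s}(f))=\varphi(\overline{s}(f)|_{B_X})=\varphi(f)$. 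Hence $\rho$ is an isometry, which by definition is exactly $\mathcal{G}_0(B_X)\subset\mathcal{G}_0(B_Y)$.

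There is essentially no obstacle at this stage: the symmetric regularity of $X$ and the subtle interplay between $d\widetilde{f}$, $\widetilde{df}$, and $s$ have already been absorbed into Proposition \ref{prop: extension s}. The only minor point one must be careful with is not to confuse the formula $\overline{s}(f)=\widetilde{f}\circ s^\ast\circ i_Y$ — which goes through the biduals — with a naive composition on $Y$; but the identity $s^\ast\circ i_Y|_X=i_X$ handles this cleanly.
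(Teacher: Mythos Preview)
Your proof is correct and follows essentially the same route as the paper, which treats the corollary as an immediate consequence of Proposition~\ref{prop: extension s} together with the equivalence recorded at the start of Section~\ref{section: relation}. You are simply more explicit in unwinding why $\overline{s}(f)|_{B_X}=f$ (via $s^\ast\circ i_Y|_X=i_X$, which the paper also notes in the proof of Proposition~\ref{prop: extension s}) and in repeating the isometry computation from Proposition~\ref{prop:1comp}, but no new idea is involved.
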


\subsection{Dual isometric spaces}
It is known that there exist non isomorphic Banach spaces with isomorphic duals. Attending to that, D\'{\i}az and Dineen \cite{DiDi} posed the following question: if $X$ and $Y$ are Banach spaces such that $X^*$ and $Y^*$ are isomorphic, under which conditions is it true that $\mathcal P(^nX)$ and $\mathcal P(^nY)$ are isomorphic for every $n\ge 1$? That is, if $X^*$ and $Y^*$ are isomorphic (i. e. the spaces of 1-homogeneous polynomials are isomorphic) does it imply that the spaces of $n$-homogeneous polynomials are isomorphic for every $n$? They also gave a partial answer to this question. Later, a relaxation of the conditions was obtained by Cabello-S\'{a}nchez, Castillo and Garc\'{\i}a \cite[Th. 1]{CCG} and Lassalle and Zalduendo \cite[Th. 4]{LZ} independently, proving that the answer is affirmative whenever $X$ and $Y$ are symmetrically regular. We present here a version of this result for holomorphic Lipschitz functions on the ball. Since we need to remain inside the ball when changing the space we have to restrict ourselves to the case of isometric  isomorphisms.

\begin{proposition}
	If $X$ and $Y$ are symmetrically regular Banach spaces such that $X^*$ and $Y^*$ are isometrically isomorphic then $\mathcal HL_0(B_X)$ and $\mathcal HL_0(B_Y)$ are isometrically isomorphic as well.
\end{proposition}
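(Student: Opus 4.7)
The plan is to transfer functions between $\mathcal{HL}_0(B_X)$ and $\mathcal{HL}_0(B_Y)$ via the Aron-Berner extension composed with the adjoint of the dual isomorphism. Let $T\colon X^*\to Y^*$ be an isometric isomorphism and set $S:=T^*\colon Y^{**}\to X^{**}$, which is also an isometric isomorphism and is $w^*$-$w^*$ continuous. Since $S$ maps $B_Y \subset B_{Y^{**}}$ into $B_{X^{**}}$, I would define
\[\Phi\colon \mathcal{HL}_0(B_X)\to \mathcal{HL}_0(B_Y),\quad \Phi(f)(y):=\widetilde{f}(S(y)),\]
and, using $T^{-1}\colon Y^* \to X^*$, define $\Psi\colon \mathcal{HL}_0(B_Y)\to \mathcal{HL}_0(B_X)$ by $\Psi(g)(x):=\widetilde{g}(S^{-1}(x))$. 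By Proposition~\ref{prop:symmetrically regular}, $\widetilde{f}\in \mathcal{HL}_0(B_{X^{**}})$ with $L(\widetilde{f})=L(f)$; since $S$ is linear and isometric, $\Phi(f)$ is holomorphic, vanishes at $0$, and satisfies $L(\Phi(f))\leq L(f)$, and the same is true for $\Psi$. Hence $\Phi$ and $\Psi$ are linear contractions, and it remains to verify $\Psi\circ\Phi=\mathrm{id}$ and $\Phi\circ\Psi=\mathrm{id}$: the two contractions will then automatically be mutually inverse isometries.

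To establish the first identity, expand $f\in\mathcal{HL}_0(B_X)$ in its Taylor series $f=\sum_{n\geq 1}P_n$ at $0$. The Aron-Berner extension is compatible with Taylor series, so $\widetilde{f}=\sum_{n\geq 1}\widetilde{P_n}$ on $B_{X^{**}}$, and consequently $\Phi(f)$ has Taylor expansion $\sum_{n\geq 1}Q_n$ at $0$, where $Q_n(y):=\widetilde{P_n}(S(y))$ is an $n$-homogeneous polynomial on $Y$. The desired identity $\Psi\circ\Phi=\mathrm{id}$ thus reduces, term by term, to showing
\[\widetilde{Q_n}(y^{**})=\widetilde{P_n}(S(y^{**}))\quad\text{for every } y^{**}\in Y^{**},\ n\geq 1.\]
This is the polynomial-level content underlying the isomorphisms $\mathcal{P}(^nX)\cong\mathcal{P}(^nY)$ from Cabello-S\'anchez-Castillo-Garc\'ia \cite{CCG} and Lassalle-Zalduendo \cite{LZ}. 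Passing to the associated symmetric $n$-linear forms and using $\widetilde{\widecheck{P_n}}=\widecheck{\widetilde{P_n}}$ and $\widetilde{\widecheck{Q_n}}=\widecheck{\widetilde{Q_n}}$ (symmetric regularity of $X$ and $Y$ via \cite[Th.~8.3]{acg}), the task becomes
\[\widetilde{\widecheck{Q_n}}(y_1^{**},\dots,y_n^{**})=\widetilde{\widecheck{P_n}}(S(y_1^{**}),\dots,S(y_n^{**})).\]
Choosing nets $(y_{\alpha_i})\subset Y$ with $y_{\alpha_i}\to y_i^{**}$ in $w^*$, the $w^*$-$w^*$ continuity of $S$ gives $S(y_{\alpha_i})\to S(y_i^{**})$ in $w^*$; combining this with separate $w^*$-continuity in each variable of the symmetric extension $\widetilde{\widecheck{P_n}}$, the iterated limits defining both sides agree.

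The main obstacle will be justifying the separate $w^*$-continuity of $\widetilde{\widecheck{P_n}}$ in every variable. By construction the Aron-Berner extension is $w^*$-continuous only in the last variable; upgrading to continuity in each variable for symmetric multilinear forms is exactly what the symmetric regularity hypothesis buys through the equivalent characterizations in \cite[Th.~8.3]{acg}, mirroring its role in Proposition~\ref{prop:symmetrically regular}. Once this technical point is in place, the polynomial identity is secure, and summing over $n$ gives $\Psi\circ\Phi=\mathrm{id}$ on all of $\mathcal{HL}_0(B_X)$; the symmetric argument yields $\Phi\circ\Psi=\mathrm{id}$, so $\Phi$ is the desired isometric isomorphism.
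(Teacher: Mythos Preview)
Your proposal is correct and follows essentially the same route as the paper: define the transfer maps via the Aron--Berner extension composed with the adjoint of the dual isometry (this is exactly the map $\overline{s}$ of Proposition~\ref{prop: extension s}), show both directions are contractions using Proposition~\ref{prop:symmetrically regular}, and verify they are mutual inverses by reducing to homogeneous polynomials through the Taylor expansion. The only difference is that the paper outsources the polynomial identity $\overline{s^{-1}}\circ\overline{s}(P)=P$ to \cite[Cor.~3]{LZ}, whereas you reprove it directly via the separate $w^*$-continuity of $\widetilde{\widecheck{P_n}}$ granted by \cite[Th.~8.3]{acg}; this is precisely the content of the cited result, so the two arguments coincide.
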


\begin{proof}
	Let us denote by $s\colon X^*\to Y^*$ the isometric isomorphism and
	consider the mapping  $\overline{s}:  \mathcal HL_0(B_X)  \to \mathcal HL_0(B_{Y})$ as in Proposition \ref{prop: extension s}. By the proof of that proposition we derive that $\overline s$ is continuous and $\|\overline s\|\le 1$. Since $Y$ is symmetrically regular, we can use the same procedure for the mapping $\overline{s^{-1}}:  \mathcal HL_0(B_Y)  \to \mathcal HL_0(B_{X})$ leading to $\|\overline{s^{-1}}\|\le 1$. Finally, appealing to \cite[Cor. 3]{LZ} we obtain that $\overline{s^{-1}}\circ \overline s(P)=P$ for every homogeneous polynomial $P$ on $X$ and, hence,    $\overline{s^{-1}}\circ \overline s(f)=f$ for every $f\in\mathcal HL_0(B_X)$. Indeed, if $\sum_{k=0}^\infty P^k$ is the Taylor series expansion of a given $f\in\mathcal HL_0(B_X)$, then $\tilde f(z)=\sum_{k=0}^\infty \tilde P^k(z)$ for every $z\in B_{X^{**}}$. Thus
	\[
	\overline s(f)(y)=\tilde f(s^*(i_Y(y)))=\sum_{k=0}^\infty \tilde P^k(s^*(y))=\sum_{k=0}^\infty \overline s(P^k)(y),
	\]
	for every $y \in Y$. From here
	\begin{eqnarray*}
		\overline{s^{-1}}(\overline s(f))(x) & = &\widetilde{\overline s(f)}\big((s^{-1})^* (i_X(x))\big)=\sum_{k=0}^\infty \widetilde{\overline{s}(P^k)}\big((s^{-1})^* (i_X(x)\big)\\ & = & \sum_{k=0}^\infty \overline{ s^{-1}}(\overline s(P^k))(x)=\sum_{k=0}^\infty P^k(x)=f(x),
	\end{eqnarray*}
	for every $x \in X$. Analogously one can check that $\overline{s}\circ\overline{s^{-1}}(f)=f$ for every $f\in \mathcal HL(B_Y)$.
\end{proof}

In the previous proposition we can change the hypothesis of $X$ and $Y$ being symmetrically regular by $X$ or $Y$ being regular. Indeed, it is proved in \cite[Rmk. 2]{LZ} (see also \cite[Prop. 1]{CCG}) that if $X^*$ and $Y^*$ are  isomorphic and $X$ is regular then so is $Y$.

\subsection{Mapping between $\mathcal{G}_0(B_X)$ and $\mathcal{G}_0(B_Y)$}

Any linear mapping between $X$ and $Y$ produces a mapping between $\mathcal{G}_0(B_X)$ and $\mathcal{G}_0(B_Y)$ by a canonical procedure (actually, two canonical procedures depending on the norm of the mapping).

(i) Let $\psi\colon X\to Y$ a linear mapping with $\|\psi\|\le 1$. Note that $L(\psi)=\|\psi\|$ in this case. Since $\psi(B_X)\subset B_Y$ we can define the canonical mapping with norm $\le 1$:
\begin{align*}
	\mathcal HL_0(B_Y) & \to \mathcal HL_0(B_X)\\
	f & \mapsto f\circ\psi.
\end{align*}

Thus, the following also has  norm $\le 1$:

\begin{align*}  T_{\delta_Y\circ \psi}\colon  \mathcal G_0(B_X) & \to \mathcal G_0(B_Y)\\
	\varphi & \mapsto \widehat{\varphi},
\end{align*} where $\widehat{\varphi}(f)=\varphi (f\circ\psi)$.

(ii) When $\|\psi\|>1$ the previous construction does not work but we can appeal to a linearization plus differentiation process (as we used to show that $X$ is a 1-complemented subspace of $\mathcal{G}_0(B_X)$).

Let $\psi\in \mathcal L(X,Y)$ so that $\psi|_{B_X}\in\mathcal HL_0(B_X,Y)$. We have the usual commutative diagram:

\begin{equation*}
	\xymatrix{
		B_X \ar[r]^{\psi|_{B_X}}  \ar[d]_{\delta_X}    &  Y \ar[d]^{d\delta_Y (0)} \\
		\mathcal{G}_0(B_X)  \ar[ru]_{T_{\psi}}  & \mathcal{G}_0(B_Y)
	}
\end{equation*} where $T_{\psi}\in\mathcal L(\mathcal{G}_0(B_X),Y)$.

Applying the differential at 0 to the equality $\psi|_{B_X}=T_{\psi}\circ \delta_X$ we get the commutative diagram:
\begin{equation*}
	\xymatrix{
		X \ar[r]^{\psi}  \ar[d]_{d\delta_X (0)}    &  Y \ar[d]^{d\delta_Y (0)} \\
		\mathcal{G}_0(B_X) \ar[r]_{d\delta_Y (0)\circ T_{\psi}} \ar[ru]_{T_{\psi}}  & \mathcal{G}_0(B_Y).
	}
\end{equation*}

Note that the linear mapping $d\delta_Y (0)\circ T_{\psi}:\mathcal{G}_0(B_X)\to \mathcal{G}_0(B_Y)$ has norm less than or equal to $\|\psi\|$.

\section{Local complementation in the bidual}\label{Section: local complementation}

In this section, we are interested in the relationship between $\mathcal G_0(B_{X^{**}})$ and $\mathcal G_0(B_X)^{**}$ under the hypothesis of $X^{**}$ having the MAP, in the spirit of what is done in \cite{CG}.

We begin with a result about a special approximation behavior in the case that the bidual space has the MAP.

\begin{proposition}
	\label{prop:aproximacion-bidual-map}
	Let $X,Y$ be Banach spaces such that $X^{**}$ has the MAP. For each $f\in\mathcal HL_0(B_{X^{**}}, Y)$ with $L(f)=1$ there exists a net $(Q_\alpha)\subset \mathcal P_{f,0}(X,Y)$ with $L(Q_\alpha|_{B_X})\le 1$ satisfying $\widetilde{Q}_\alpha(x^{**})\to f(x^{**})$ for all $x^{**}\in B_{X^{**}}$.
\end{proposition}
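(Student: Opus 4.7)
The plan is to combine Proposition~\ref{Prop:bola unidad}(b) applied on $X^{**}$ (which has MAP by hypothesis) with the principle of local reflexivity (PLR) to pass from finite-type polynomials on $X^{**}$ to Aron--Berner extensions of finite-type polynomials on $X$. Because the whole construction factors through a finite-dimensional subspace of $X^{***}$, compactness will save the Lipschitz-norm bound.

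First, Proposition~\ref{Prop:bola unidad}(b) on $X^{**}$ produces a net $(P_\alpha)\subset B_{\mathcal P_{f,0}(X^{**},Y)}$ with $P_\alpha\to f$ pointwise on $B_{X^{**}}$. A standard diagonal argument then reduces the statement to: for each such $P$, construct a net $(Q_\gamma)\subset\mathcal P_{f,0}(X,Y)$ with $L(Q_\gamma|_{B_X})\le 1$ and $\widetilde{Q_\gamma}\to P$ pointwise on $B_{X^{**}}$. Write $P(z)=\sum_{k,j}[\phi_{k,j}(z)]^k y_{k,j}$ with $\phi_{k,j}\in X^{***}$ and put $V=\lspan\{\phi_{k,j}\}\subset X^{***}$, a finite-dimensional subspace. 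Factor $P=\tilde P\circ\iota$ through the evaluation $\iota\colon X^{**}\to V^{*}$, $\iota(z)(\phi)=\phi(z)$, and the polynomial $\tilde P\colon V^{*}\to Y$ given by $\tilde P(v)=\sum_{k,j}[v(\phi_{k,j})]^k y_{k,j}$. Since the adjoint $\iota^{*}\colon V\to X^{***}$ is the isometric inclusion, $\iota$ is a quotient operator, from which $L(\tilde P|_{B_{V^{*}}})=L(P|_{B_{X^{**}}})\le 1$.

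For every finite-dimensional $E\subset X^{**}$ and every $\epsilon>0$, PLR applied to $X^{*}$ (whose bidual is $X^{***}$) with the pair $(V,E)$ produces $T=T_{E,\epsilon}\colon V\to X^{*}$ with $\|T\|\le 1+\epsilon$ and $z(T\phi)=\phi(z)$ for all $z\in E,\ \phi\in V$. Define
\[ Q_{E,\epsilon}(x)=\tilde P\bigl(T^{*}(i_X x)\bigr)=\sum_{k,j}[T\phi_{k,j}(x)]^k y_{k,j}\ \in\ \mathcal P_{f,0}(X,Y).\]
Its AB extension is $\widetilde{Q_{E,\epsilon}}=\tilde P\circ T^{*}$, which equals $P$ on $E$ by the PLR identity. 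The chain rule, together with $\|T^{*}\|\le 1+\epsilon$, yields
\[ L(Q_{E,\epsilon}|_{B_X})\le (1+\epsilon)\sup_{v\in(1+\epsilon)B_{V^{*}}}\|d\tilde P(v)\|.\]

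The delicate step is showing this bound tends to $L(\tilde P|_{B_{V^{*}}})\le 1$: the crude term-by-term estimate on the homogeneous decomposition of $\tilde P$ gives a factor $\tfrac{(1+\epsilon)^{\deg P}-1}{\epsilon}\to\deg P$, which is fatal. The rescue is that $V^{*}$ is \emph{finite-dimensional}, so $\overline{B}_{V^{*}}$ is compact and $d\tilde P$ is continuous on $V^{*}$; hence $\sup_{v\in(1+\epsilon)B_{V^{*}}}\|d\tilde P(v)\|\to L(\tilde P|_{B_{V^{*}}})$ as $\epsilon\to 0$. Setting $\lambda_{E,\epsilon}=\max\{1,L(Q_{E,\epsilon}|_{B_X})\}$, we have $\lambda_{E,\epsilon}\to 1$ along the net, so $Q_{E,\epsilon}/\lambda_{E,\epsilon}\in\mathcal P_{f,0}(X,Y)$ has Lipschitz constant at most $1$ on $B_X$, while for each fixed $z\in B_{X^{**}}$ its AB extension equals $P(z)/\lambda_{E,\epsilon}\to P(z)$ once $z\in E$. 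A final diagonal argument over $\alpha$ and $(E,\epsilon)$ produces the required net.
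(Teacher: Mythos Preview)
Your argument is correct, but it follows a genuinely different route from the paper's proof. The paper does not use the PLR directly; instead, after reducing via Proposition~\ref{Prop:bola unidad}(a) to a polynomial $P\in\mathcal P_0(X^{**},Y)$ with $L(P|_{B_{X^{**}}})\le 1$ (not necessarily of finite type), it invokes \cite[Cor.~1]{CG}: since $X^{**}$ has the MAP there is a net of \emph{finite-rank} operators $t_\alpha\in\mathcal L(X,X^{**})$ with $\|t_\alpha\|\le 1$ and $t_\alpha^{**}(x^{**})\to x^{**}$ for all $x^{**}$. Then $Q_\alpha:=P\circ t_\alpha\in\mathcal P_{f,0}(X,Y)$ automatically satisfies $L(Q_\alpha|_{B_X})\le 1$ (no overspill, no normalization), and $\widetilde{Q_\alpha}=P\circ t_\alpha^{**}\to P$ pointwise.

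The trade-off is clear. The paper's proof is shorter and avoids the $(1+\varepsilon)$-management entirely, because the cited result from \cite{CG} has already absorbed the local-reflexivity work and delivers operators of norm exactly $\le 1$. Your approach is more self-contained: you unpack essentially the same idea by hand, factoring each finite-type $P$ through a finite-dimensional $V^*$ and then applying PLR, at the price of having to control $\sup_{(1+\varepsilon)B_{V^*}}\|d\tilde P\|$ via compactness and then renormalize. Both arguments ultimately rest on the same two ingredients (MAP of $X^{**}$ and local reflexivity), just combined in a different order.
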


\begin{proof}
	By Proposition \ref{Prop:bola unidad} it is enough to consider $f=P\in \mathcal P_0(X^{**},Y)$ with $L(P|_{B_{X^{**}}})\le 1$. If $X^{**}$ has the MAP  we can appeal to \cite[Cor. 1]{CG} to obtain a net of finite rank mappings $(t_\alpha)\subset \mathcal L(X,X^{**})$ with $\|t_\alpha\|\le 1$ and $t_\alpha^{**}(x^{**})\to x^{**}$ for all $x^{**}\in X^{**}$. Now we define $Q_\alpha=P\circ t_\alpha$, which clearly belongs to $\mathcal P_{f,0}(X,Y)$. Note that, for any $x,y\in B_X$,
	$$
	\|Q_\alpha(x)-Q_\alpha(y)\|=\|P(t_\alpha(x))-P(t_\alpha(y))\|\le L(P|_{B_{X^{**}}}) \|t_\alpha\| \|x-y\|\le  \|x-y\|.
	$$
	Then, $L(Q_\alpha|_{B_X})\le 1$. Since $t_\alpha$ is a finite rank mapping, we have that $t_\alpha^{**}\in\mathcal L(X^{**},X^{**})$. Hence, $\widetilde Q_\alpha=\widetilde P\circ t_\alpha^{**}=P\circ t_\alpha^{**}$. As a consequence, $\widetilde Q_\alpha(x^{**})=P(t_\alpha^{**}(x^{**}))\to P(x^{**})$ for all $x^{**}\in B_{X^{**}}$.
	
\end{proof}

For a symmetrically regular space $X$,  we consider the following mapping
\begin{align*}
	\Theta:  B_{X^{**}} & \to \mathcal G_0(B_X)^{**}=\mathcal HL_0(B_X)^*\\
	x^{**} & \mapsto [f\in\mathcal HL_0(B_X)\mapsto \widetilde f(x^{**})].
\end{align*}
\begin{proposition}
	If $X$ is symmetrically regular then $\Theta$ belongs to $\mathcal HL_0(B_{X^{**}}, \mathcal G_0(B_X)^{**})$ with $L(\Theta)=1$.
\end{proposition}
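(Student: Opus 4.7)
The plan is to verify the four requirements defining membership in $\mathcal HL_0(B_{X^{**}},\mathcal G_0(B_X)^{**})$: (i) $\Theta(x^{**})$ is indeed a bounded linear functional on $\mathcal HL_0(B_X)$ for every $x^{**}\in B_{X^{**}}$, (ii) $\Theta(0)=0$, (iii) $\Theta$ is Lipschitz with constant exactly $1$, and (iv) $\Theta$ is holomorphic. Steps (i)--(iii) are short exploitations of Proposition \ref{prop:symmetrically regular}, while step (iv) is the delicate one because the target is a bidual.

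For (i), linearity of $\Theta(x^{**})$ in $f$ is inherited from the linearity of the AB extension $f\mapsto\tilde f$, which follows directly from its iterated-weak-star definition. Boundedness is immediate: $|\tilde f(x^{**})|\le \|\tilde f\|_\infty=\|f\|_\infty\le L(f)$. Item (ii) is trivial since $\tilde f(0)=f(0)=0$. For (iii), the symmetric regularity of $X$ combined with Proposition \ref{prop:symmetrically regular} gives $L(\tilde f)=L(f)$, so
\[
|\Theta(x^{**})(f)-\Theta(y^{**})(f)|=|\tilde f(x^{**})-\tilde f(y^{**})|\le L(f)\,\|x^{**}-y^{**}\|,
\]
proving $L(\Theta)\le 1$. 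The matching lower bound comes from testing against $f=x^{*}|_{B_X}$ with $x^{*}\in B_{X^{*}}$, since $\widetilde{x^{*}|_{B_X}}(x^{**})=x^{**}(x^{*})$; taking the supremum over $x^{*}\in B_{X^{*}}$ recovers $\|x^{**}-y^{**}\|$.

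The main obstacle is (iv), because norm-holomorphicity into $\mathcal G_0(B_X)^{**}=\mathcal HL_0(B_X)^{*}$ is strictly stronger than the weak-star-pointwise holomorphicity one reads off directly: for each fixed $f\in\mathcal HL_0(B_X)$, the scalar function $x^{**}\mapsto\langle\Theta(x^{**}),f\rangle=\tilde f(x^{**})$ is holomorphic on $B_{X^{**}}$, which only gives that $\Theta$ is $w^{*}$-holomorphic. My plan to upgrade this is via Cauchy estimates. Fix $x_0^{**}\in B_{X^{**}}$ and choose $0<r<1-\|x_0^{**}\|$. For each $f\in B_{\mathcal HL_0(B_X)}$, expand $\tilde f$ in its Taylor series at $x_0^{**}$, namely $\tilde f(x_0^{**}+y^{**})=\sum_{n=0}^{\infty}P_n^{\tilde f}(y^{**})$, and define
\[
Q_n(y^{**})(f):=P_n^{\tilde f}(y^{**}).
\]
Linearity of $f\mapsto\tilde f$ makes $Q_n(y^{**})$ a linear functional on $\mathcal HL_0(B_X)$, and the Cauchy inequality
\[
|P_n^{\tilde f}(y^{**})|\le \frac{\|\tilde f\|_\infty}{r^{n}}\,\|y^{**}\|^{n}\le \frac{L(f)}{r^{n}}\,\|y^{**}\|^{n}
\]
yields $\|Q_n(y^{**})\|_{\mathcal HL_0(B_X)^{*}}\le r^{-n}\|y^{**}\|^{n}$, so each $Q_n$ is a continuous $n$-homogeneous polynomial from $X^{**}$ to $\mathcal G_0(B_X)^{**}$ of norm at most $r^{-n}$.

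With the polynomial coefficients in hand, the series $\sum_n Q_n(y^{**})$ converges absolutely in $\mathcal G_0(B_X)^{**}$ for $\|y^{**}\|<r$ by comparison with a geometric series, hence defines a holomorphic function of $y^{**}$. Evaluation against any $f$ shows it agrees with $\Theta(x_0^{**}+y^{**})$. Since $x_0^{**}$ was arbitrary, $\Theta$ is holomorphic on all of $B_{X^{**}}$, completing the verification and finishing the proof.
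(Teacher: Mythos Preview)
Your proof is correct, and items (i)--(iii) line up exactly with the paper's argument (including the lower bound for $L(\Theta)$ obtained by testing against restrictions of $x^*\in B_{X^*}$).

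The only methodological divergence is in (iv). The paper handles holomorphy in one line: since $\langle\Theta(\cdot),f\rangle=\tilde f$ is holomorphic for every $f\in\mathcal HL_0(B_X)$, the map $\Theta$ is weak-star holomorphic, and \emph{therefore} holomorphic. This invokes the standard fact that a locally bounded weak-star holomorphic map into a dual space is norm-holomorphic (local boundedness being immediate here from $\|\Theta(x^{**})\|\le 1$). What you do instead is essentially reprove that fact by hand for this particular $\Theta$: you extract the Taylor polynomials $Q_n$ via the scalar expansions of each $\tilde f$, bound them uniformly with Cauchy's inequalities, and sum the resulting norm-convergent series. This is valid and self-contained, but it is exactly the classical proof of the general principle the paper is citing, so you are working harder than necessary. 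One small point worth making explicit in your write-up: to conclude that $Q_n$ is a genuine $n$-homogeneous \emph{polynomial} (not merely an $n$-homogeneous map) into $\mathcal G_0(B_X)^{**}$, you should note that the associated symmetric $n$-linear form is obtained by assembling the polarizations $\widecheck{P_n^{\tilde f}}$ over $f$, which is legitimate by the linearity of $f\mapsto\tilde f$.
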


\begin{proof}
	If $X$ is symmetrically regular, by Proposition \ref{prop:symmetrically regular}, the AB extension is an isometry from $\mathcal HL_0(B_X)$ into $\mathcal HL_0(B_{X^{**}})$, so $\Theta $ is well defined.
	For any $f\in \mathcal HL_0(B_X)$, we have $\Theta(\cdot)(f)=\widetilde f$, meaning that $\Theta$ is weak-star holomorphic and thus, it is holomorphic. Also, $\Theta(0)=0$ and for any $x^{**}, y^{**}\in B_{X^{**}}$, once again by the symmetric regularity of $X$ we have
	$$
	\|\Theta(x^{**})-\Theta(y^{**})\| = \sup_{f\in B_{\mathcal HL_0(B_X)}} \|\widetilde f(x^{**})-\widetilde f(y^{**})\| \le \|x^{**}-y^{**}\|.
	$$
	This means that $\Theta\in\mathcal HL_0(B_{X^{**}}, \mathcal G_0(B_X)^{**})$ with $L(\Theta)\le 1$. On the other hand,
	$$
	\|\Theta(x^{**})-\Theta(y^{**})\|\ge \sup_{x^*\in B_{X^*}} |x^{**}(x^*)-y^{**}(x^*)|=\|x^{**}-y^{**}\|.
	$$ Therefore, $L(\Theta)=1$.
\end{proof}

As a consequence of the previous proposition, if $X$ is symmetrically regular we can linearize the mapping $\Theta$:
\begin{equation*}
	\xymatrix{
		B_{X^{**}} \ar[r]^{\Theta\ }  \ar[d]_{\delta_{X^{**}}}    &  \mathcal G_0(B_X)^{**}  \\
		\mathcal G_0(B_{X^{**}})  \ar[ru]_{T_{\Theta}}  &
	}
\end{equation*}
This produces a linear mapping $T_\Theta\in\mathcal L(\mathcal G_0(B_{X^{**}}), \mathcal G_0(B_X)^{**})$ with $\|T_\Theta\|=L(\Theta)=1$.

Motivated by the Principle of Local Reflexivity, Kalton \cite{Ka} introduced the following definition:

\begin{definition}
	Given Banach spaces $X\subset Y$ we say that $X$ is 1-locally complemented in $Y$ if for every $\varepsilon>0$ and every finite dimensional subspace $F$ of $Y$ there exist a linear mapping $T:F\to X$ such that $\|T\|\le 1+\varepsilon$ and $T(x)=x$ for all $x\in F\cap X$.
\end{definition}

Note that the Principle of Local Reflexivity says that $X$ is 1-locally complemented in $X^{**}$, for any Banach space $X$.

\begin{theorem}
	If $X$ is symmetrically regular and $X^{**}$ has the MAP then $T_\Theta$ embeds $\mathcal G_0(B_{X^{**}})$ as a 1-locally complemented subspace of $\mathcal G_0(B_X)^{**}$. In particular,  $T_\Theta$ is an isometry.
\end{theorem}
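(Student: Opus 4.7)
The plan is to apply Kalton's characterization of $1$-local complementation \cite{Ka}: an isometric embedding $j\colon Y\hookrightarrow Z$ realises $j(Y)$ as a $1$-locally complemented subspace of $Z$ exactly when the restriction map $j^{*}\colon Z^{*}\to Y^{*}$ admits a contractive linear right inverse. In our situation this amounts to producing a norm-one extension operator
\[
\sigma\colon \mathcal HL_0(B_{X^{**}})\longrightarrow \mathcal G_0(B_X)^{***}=\mathcal HL_0(B_X)^{**}
\]
with $T_\Theta^{*}\circ\sigma=\mathrm{id}_{\mathcal HL_0(B_{X^{**}})}$. Once $\sigma$ is in hand the \emph{in particular} statement is automatic: for $\psi\in \mathcal G_0(B_{X^{**}})$ and $g\in B_{\mathcal HL_0(B_{X^{**}})}$,
\[
|\langle g,\psi\rangle|=|\langle \sigma(g),T_\Theta(\psi)\rangle|\le \|\sigma(g)\|\,\|T_\Theta(\psi)\|\le \|T_\Theta(\psi)\|,
\]
so $\|\psi\|\le\|T_\Theta(\psi)\|$, and $\|T_\Theta\|\le 1$ gives the reverse inequality.

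To construct $\sigma$ I will reuse the net from Proposition~\ref{prop:aproximacion-bidual-map}: by \cite[Cor.~1]{CG}, the MAP of $X^{**}$ provides a net $(t_\alpha)\subset\mathcal L(X,X^{**})$ of \emph{finite-rank} contractions with $t_\alpha^{**}(x^{**})\to x^{**}$ in norm for every $x^{**}\in X^{**}$. For $g\in\mathcal HL_0(B_{X^{**}})$ the composition $g\circ t_\alpha$ belongs to $\mathcal HL_0(B_X)$ with $L(g\circ t_\alpha)\le L(g)\|t_\alpha\|\le L(g)$, so the net $(g\circ t_\alpha)_\alpha$ is bounded in $\mathcal HL_0(B_X)\hookrightarrow \mathcal HL_0(B_X)^{**}$. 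Fix an ultrafilter $\mathcal U$ on the index set refining the order filter and define
\[
\sigma(g):=w^{*}\text{-}\lim_{\mathcal U}(g\circ t_\alpha)\in\mathcal HL_0(B_X)^{**}.
\]
Taking limits along an ultrafilter makes $\sigma$ linear in $g$, and the uniform bound $L(g\circ t_\alpha)\le L(g)$ gives $\|\sigma\|\le 1$.

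What remains is $T_\Theta^{*}(\sigma(g))=g$, and the algebraic heart of this step is the identity $\widetilde{g\circ t_\alpha}=g\circ t_\alpha^{**}$. Writing $t_\alpha=\sum_{i=1}^{n_\alpha} x_i^{*}\otimes z_i$, each homogeneous component $g_m=P^m g(0)$ of the Taylor series of $g$ produces a polynomial $g_m\circ t_\alpha$ in the linear forms $x_i^{*}$, and its AB-extension is computed by replacing $x_i^{*}(x)$ with $\langle x_i^{*},x^{**}\rangle$; this yields $\widetilde{g_m\circ t_\alpha}=g_m\circ t_\alpha^{**}$, and summing in $m$ delivers the claim. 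Therefore, for any $\psi\in\mathcal G_0(B_{X^{**}})$,
\[
T_\Theta^{*}(\sigma(g))(\psi)=\lim_{\mathcal U}T_\Theta(\psi)(g\circ t_\alpha)=\lim_{\mathcal U}\psi(\widetilde{g\circ t_\alpha})=\lim_{\mathcal U}\psi(g\circ t_\alpha^{**}).
\]
For $\psi=\delta_{X^{**}}(x^{**})$ this evaluates to $\lim g(t_\alpha^{**}(x^{**}))=g(x^{**})=\psi(g)$ by the Lipschitz continuity of $g$ and the norm convergence $t_\alpha^{**}(x^{**})\to x^{**}$; a routine three-epsilon argument using the uniform bound $|\psi(g\circ t_\alpha^{**})|\le L(g)\|\psi\|$ propagates the identity to arbitrary $\psi\in\mathcal G_0(B_{X^{**}})$, and Kalton's criterion finishes the proof.

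The main obstacle I anticipate is verifying $\widetilde{g\circ t_\alpha}=g\circ t_\alpha^{**}$: it is the one step that genuinely exploits both the finite-rank structure of $t_\alpha$ and the way the AB-extension commutes with composition by finite-rank maps term by term in the Taylor expansion. Symmetric regularity of $X$ is not used in this construction itself; it enters only upstream, to guarantee that $\Theta$ and hence $T_\Theta$ are well-defined contractive maps via the results preceding this theorem.
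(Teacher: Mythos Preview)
Your argument is correct, and it reaches the goal by a route that is dual to the paper's. The paper stays on the \emph{predual} side: it applies Proposition~\ref{prop:aproximacion-bidual-map} to the universal map $\delta_{X^{**}}$, obtains finite-type polynomials $Q_\alpha\in\mathcal P_{f,0}(X,\mathcal G_0(B_{X^{**}}))$, linearizes them to finite-rank operators $T_{Q_\alpha}\colon \mathcal G_0(B_X)\to\mathcal G_0(B_{X^{**}})$, verifies the commutative triangle $T_{Q_\alpha}^{**}\circ T_\Theta=T_{\widetilde{Q}_\alpha}$, and then invokes \cite[Lem.~4]{CG}. You instead work on the \emph{dual} side: you act with the same net $(t_\alpha)$ from \cite[Cor.~1]{CG} directly on each $g\in\mathcal HL_0(B_{X^{**}})$, and build the extension operator $\sigma$ as an ultrafilter limit. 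The key algebraic identity is the same in both proofs (commutation of the AB-extension with composition by the finite-rank $t_\alpha$); the paper encodes it as $T_{Q_\alpha}^{*}(f)=T_f\circ Q_\alpha$ together with $\widetilde{T_f\circ Q_\alpha}=T_f^{**}\circ\widetilde{Q}_\alpha$, whereas you write it as $\widetilde{g\circ t_\alpha}=g\circ t_\alpha^{**}$. What your approach buys is that it avoids the detour through Proposition~\ref{prop:aproximacion-bidual-map} and the black box \cite[Lem.~4]{CG}, at the price of the ultrafilter device; what the paper's approach buys is a structurally cleaner diagram that simultaneously exhibits the MAP of $\mathcal G_0(B_{X^{**}})$ via $(T_{\widetilde{Q}_\alpha})$.

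One small simplification: the ``three-epsilon'' step at the end is unnecessary. Once you have shown $T_\Theta^{*}(\sigma(g))(\delta_{X^{**}}(x^{**}))=g(x^{**})$ for every $x^{**}\in B_{X^{**}}$, you already know that the element $T_\Theta^{*}(\sigma(g))\in\mathcal HL_0(B_{X^{**}})$ coincides with $g$ as a function on $B_{X^{**}}$, hence as a functional on $\mathcal G_0(B_{X^{**}})$.
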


\begin{proof}
	We know that the mapping $\delta_{X^{**}}$ belongs to $\mathcal HL_0(B_{X^{**}}, \mathcal G_0(B_{X^{**}}))$ with $L(\delta_{X^{**}})=1$. Thus, we can apply Proposition \ref{prop:aproximacion-bidual-map} to get a net $(Q_\alpha)\subset \mathcal P_{f,0}(X,\mathcal G_0(B_{X^{**}}))$ with $L(Q_\alpha|_{B_X})\le 1$ such that $\widetilde{Q}_\alpha(x^{**})\to \delta_{X^{**}}(x^{**})$ for all $x^{**}\in B_{X^{**}}$.
	
	Consider the following two commutative diagrams:
	\begin{equation*}
		\xymatrix{
			B_{X} \ar[r]^{Q_\alpha|_{B_X} }  \ar[d]_{\delta_{X}}    &  \mathcal G_0(B_{X^{**}})  \\
			\mathcal G_0(B_{X})  \ar[ru]_{T_{Q_\alpha}}  &
		} \qquad\qquad \xymatrix{
			B_{X^{**}} \ar[r]^{\widetilde Q_\alpha|_{B_{X^{**}} } }  \ar[d]_{\delta_{X^{**}}}    &  \mathcal G_0(B_{X^{**}})  \\
			\mathcal G_0(B_{X^{**}})  \ar[ru]_{T_{\widetilde Q_\alpha} } &
		}
	\end{equation*}
	Note that, since $X$ is symmetrically regular we have
	$$
	\|T_{Q_\alpha}\|=L(Q_\alpha|_{B_X} )= L(\widetilde Q_\alpha|_{B_X} ) = \|T_{\widetilde Q_\alpha}\| \le 1.
	$$ For each $\alpha$, since $T_{Q_\alpha}$ is a finite rank operator we have that $T_{Q_\alpha}^{**}$ belongs to $\mathcal L(\mathcal G_0(B_{X})^{**},\mathcal G_0(B_{X^{**}}))$. Thus, we have the following diagram
	
	\begin{eqnarray*}
		\xymatrix{{ \mathcal G_0(B_{X^{**}})} \ar[rr]^{T_\Theta} \ar[dr]_{T_{\widetilde Q_\alpha}} & & {\mathcal G_0(B_{X})^{**} }  \ar[ld]^{T_{ Q_\alpha}^{**}} \\
			&{  \mathcal G_0(B_{X^{**}})} &  }
	\end{eqnarray*}
	The space $\mathcal G_0(B_{X^{**}})$ has the MAP witnessed by the net $(T_{\widetilde Q_\alpha})$ thanks to (the proof of) Theorem~\ref{theor: MAP}. Appealing to \cite[Lem. 4]{CG}, the proof will be completed once we check that the previous diagram is commutative. For this, it is enough to prove that $T_{\widetilde Q_\alpha}(\delta_{X^{**}}(x^{**}))=T_{Q_\alpha}^{**}\circ T_\Theta(\delta_{X^{**}}(x^{**}))$ for every $x^{**}\in B_{X^{**}}$.
	
	On the one hand we know that $T_{\widetilde Q_\alpha}(\delta_{X^{**}}(x^{**}))=\widetilde Q_\alpha
	(x^{**})$. On the other hand, $T_{Q_\alpha}^{**}\circ T_\Theta(\delta_{X^{**}}(x^{**}))=T_{Q_\alpha}^{**}(\Theta(x^{**}))$. To understand this element of $\mathcal G_0(B_{X^{**}})$ let us see how it acts on any $f\in \mathcal HL_0(B_{X^{**}})$:
	
	\begin{equation} \label{eqn:loc complem}
		\langle T_{Q_\alpha}^{**}(\Theta(x^{**})),f\rangle = \langle \Theta(x^{**}), T_{Q_\alpha}^{*}(f)\rangle.
	\end{equation}

	Now, $T_{Q_\alpha}^{*}(f)$ belongs to $\mathcal HL_0(B_{X})$ and for any $x\in B_X$ satisfies
	$$
	T_{Q_\alpha}^{*}(f)(x)= \langle T_{Q_\alpha}^{*}(f),\delta_X(x)\rangle = \langle f, T_{Q_\alpha}(\delta_X(x))\rangle = \langle f, Q_\alpha(x)\rangle = (T_f\circ Q_\alpha) (x).
	$$
	Then, $T_{Q_\alpha}^{*}(f)=T_f\circ Q_\alpha$. Replacing  this equality in \eqref{eqn:loc complem} and using the definition of $\Theta$ and the fact that the range of $\widetilde Q_\alpha$ is contained in $\mathcal G_0(B_{X^{**}})$ we derive
	\begin{align*}
		\langle T_{Q_\alpha}^{**}(\Theta(x^{**})),f\rangle  & =   \langle \Theta(x^{**}), T_f\circ Q_\alpha\rangle = \widetilde{T_f\circ Q}_\alpha(x^{**}) = T_f^{**}\circ \widetilde Q_\alpha(x^{**})\\
		&= T_f(\widetilde Q_\alpha(x^{**})) = \langle \widetilde Q_\alpha(x^{**}), f\rangle, \quad \textrm{for all }f\in \mathcal HL_0(B_{X^{**}}).
	\end{align*}
	
	Therefore, $T_{Q_\alpha}^{**}(\Theta(x^{**}))=\widetilde Q_\alpha(x^{**})$ and thus $T_{Q_\alpha}^{**}\circ T_\Theta(\delta_{X^{**}}(x^{**}))=T_{\widetilde Q_\alpha}(\delta_{X^{**}}(x^{**}))$
	for every $x^{**}\in B_{X^{**}}$, which finishes the proof.

\end{proof}

It is known (see, for instance, \cite[Lem. 3]{CG} or \cite[Th. 3.5]{Ka}) that $X$ is 1-locally complemented in $Y$ if and only if $X^*$ is 1-complemented in $Y^*$ (with projection the restriction mapping). This is also equivalent to $X^{**}$ being 1-complemented in $Y^{**}$ (under the natural embedding).

\begin{corollary}\label{cor:1-complemented}
	If $X$ is symmetrically regular and $X^{**}$ has the MAP then $\mathcal HL_0(B_{X^{**}})$ is isometric to a 1-complemented subspace of $\mathcal HL_0(B_{X})^{**}$.
\end{corollary}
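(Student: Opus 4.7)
The plan is to deduce this corollary directly from the preceding theorem together with the duality principle recalled in the paragraph immediately before the corollary, namely that a subspace $Z \subset W$ is $1$-locally complemented in $W$ if and only if $Z^*$ is $1$-complemented in $W^*$ (with the restriction map serving as the projection).

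First, I would apply the preceding theorem with $Z = T_\Theta(\mathcal G_0(B_{X^{**}}))$ and $W = \mathcal G_0(B_X)^{**}$. The theorem gives that $T_\Theta$ is an isometry and that its image is $1$-locally complemented in $\mathcal G_0(B_X)^{**}$. Since $T_\Theta$ is an isometric embedding, we may identify $Z^*$ canonically with $\mathcal G_0(B_{X^{**}})^* = \mathcal HL_0(B_{X^{**}})$.

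Next, I would take duals. Applying the cited equivalence, the $1$-local complementation of $Z$ in $\mathcal G_0(B_X)^{**}$ yields that $Z^*$ is $1$-complemented in $\bigl(\mathcal G_0(B_X)^{**}\bigr)^*$. Unfolding the identifications, $\bigl(\mathcal G_0(B_X)^{**}\bigr)^* = \mathcal G_0(B_X)^{***} = \mathcal HL_0(B_X)^{**}$, while $Z^* \equiv \mathcal HL_0(B_{X^{**}})$ as noted above. Concretely, the isometric embedding of $\mathcal HL_0(B_{X^{**}})$ into $\mathcal HL_0(B_X)^{**}$ is the one provided by the duality lemma as an isometric section $j$ of the quotient map $T_\Theta^* : \mathcal G_0(B_X)^{***} \to \mathcal G_0(B_{X^{**}})^*$, and $P = j \circ T_\Theta^*$ is the associated norm-one projection onto the copy of $\mathcal HL_0(B_{X^{**}})$.

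I do not anticipate a real obstacle here, since both ingredients (the theorem and the duality equivalence) are already in place; the only thing to be careful about is the bookkeeping of the identifications $\mathcal HL_0(B_X)^{**} = \mathcal G_0(B_X)^{***}$ and $\mathcal HL_0(B_{X^{**}}) = \mathcal G_0(B_{X^{**}})^*$, together with checking that the isometric embedding and the projection produced by the duality indeed act between the spaces named in the statement.
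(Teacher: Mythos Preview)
Your proposal is correct and follows exactly the approach intended by the paper: the corollary is stated without an explicit proof precisely because it is an immediate consequence of the preceding theorem together with the duality principle recalled just before it (that $Z$ is $1$-locally complemented in $W$ iff $Z^*$ is $1$-complemented in $W^*$). Your identifications $Z^*\equiv \mathcal HL_0(B_{X^{**}})$ and $W^*=\mathcal G_0(B_X)^{***}=\mathcal HL_0(B_X)^{**}$, with the embedding arising as an isometric section of $T_\Theta^*$, are the right ones.
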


Under the same conditions of the previous results we can also obtained a version for holomorphic Lipschitz functions of the following characterization of unique norm preserving extensions to the bidual proved by Godefroy in \cite{Go}.

\begin{lemma}
	Let $X$ be a Banach space and $x^{* }\in X^*$ with $\|x^*\|=1$.  The following are equivalent:
	\begin{itemize}
		\item[$(i)$]  $x^{\ast }$ has a unique norm preserving extension to a  functional on $X^{\ast \ast }$.
		\item[$(ii)$] The function $Id_{\overline B_{X^{\ast }}}: (\overline B_{X^{\ast }} ,w^{\ast }) \longrightarrow (\overline B_{X^{\ast }} ,w) $
		is continuous at $x^{\ast }$.
	\end{itemize}
\end{lemma}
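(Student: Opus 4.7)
The plan is to leverage a canonical bijection between norm-preserving extensions of $x^{\ast}$ in $X^{\ast\ast\ast}$ and $w^\ast$-cluster points of bounded nets in $X^\ast$, via Goldstine's theorem and Banach--Alaoglu applied to the pair $(X^\ast,X^{\ast\ast\ast})$. Let $J\colon X^\ast\hookrightarrow X^{\ast\ast\ast}$ denote the canonical embedding. A functional $\varphi\in X^{\ast\ast\ast}$ is a norm-preserving extension of $x^{\ast}$ precisely when $\|\varphi\|=1$ and $\varphi\circ i_X=x^\ast$, and the canonical such extension is $J(x^\ast)$.

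For $(ii)\Rightarrow(i)$, take any norm-preserving extension $\varphi\in X^{\ast\ast\ast}$ of $x^{\ast}$ and, by Goldstine, choose a net $(x_\alpha^\ast)\subset \overline B_{X^\ast}$ with $J(x_\alpha^\ast)\xrightarrow{w^\ast}\varphi$. Evaluating at elements of $X\subset X^{\ast\ast}$ shows $x_\alpha^\ast\xrightarrow{w^\ast}x^\ast$ in $X^\ast$, so hypothesis $(ii)$ upgrades the convergence to the weak topology: $x^{\ast\ast}(x_\alpha^\ast)\to x^{\ast\ast}(x^\ast)=J(x^\ast)(x^{\ast\ast})$ for every $x^{\ast\ast}\in X^{\ast\ast}$. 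The same limit equals $\varphi(x^{\ast\ast})$ by the choice of the net, so $\varphi=J(x^\ast)$ and uniqueness follows.

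For $(i)\Rightarrow(ii)$, I argue by contraposition. If the identity map in $(ii)$ fails to be continuous at $x^{\ast}$, there is a net $(x_\alpha^\ast)\subset \overline B_{X^\ast}$ and a fixed $x^{\ast\ast}\in X^{\ast\ast}$ with $x_\alpha^\ast\xrightarrow{w^\ast}x^\ast$ but $x^{\ast\ast}(x_\alpha^\ast)\not\to x^{\ast\ast}(x^\ast)$. After passing to a subnet I may assume both $x^{\ast\ast}(x_\alpha^\ast)\to\lambda\neq x^{\ast\ast}(x^\ast)$ and, using $w^\ast$-compactness of $\overline B_{X^{\ast\ast\ast}}$, $J(x_\alpha^\ast)\xrightarrow{w^\ast}\varphi$ for some $\varphi\in X^{\ast\ast\ast}$ with $\|\varphi\|\leq 1$. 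Evaluating on $X$ forces $\varphi\circ i_X=x^\ast$, so $\|\varphi\|\geq\|x^\ast\|=1$ and $\varphi$ is a norm-preserving extension. But $\varphi(x^{\ast\ast})=\lambda\neq J(x^\ast)(x^{\ast\ast})$, exhibiting a second norm-preserving extension and contradicting $(i)$.

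The main bookkeeping issue is keeping the two canonical embeddings $i_X\colon X\hookrightarrow X^{\ast\ast}$ and $J\colon X^\ast\hookrightarrow X^{\ast\ast\ast}$ disentangled: the weak topology on $X^\ast$ is defined through $X^{\ast\ast}$, whereas convergence of extensions lives in $X^{\ast\ast\ast}$ and requires $J$. Once these embeddings are kept straight, the two implications are symmetric dual uses of Goldstine together with Banach--Alaoglu, and I do not expect any genuine technical obstacle beyond careful notation.
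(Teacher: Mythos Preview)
Your argument is correct. Note, however, that the paper does not actually supply a proof of this lemma: it is quoted as a known result of Godefroy and attributed to \cite{Go}, and is only used as a template for the analogous theorem in the $\mathcal HL_0$ setting that follows. Your proof is a standard one for Godefroy's criterion, hinging on Goldstine's theorem for the pair $(X^\ast,X^{\ast\ast\ast})$ together with Banach--Alaoglu, and it matches the way the paper argues its own analogue: in $(ii)\Rightarrow(i)$ you produce an approximating net in the smaller ball whose limit must agree with the canonical extension, and in $(i)\Rightarrow(ii)$ you pass to a cluster point to manufacture a competing extension. One minor remark: in the contrapositive direction it is cleaner to first pass to a subnet along which $|x^{\ast\ast}(x_\alpha^\ast)-x^{\ast\ast}(x^\ast)|\geq\varepsilon$ and then extract a $w^\ast$-convergent subnet in $X^{\ast\ast\ast}$, rather than juggling two limits; either way the conclusion $\varphi(x^{\ast\ast})\neq J(x^\ast)(x^{\ast\ast})$ follows.
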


Aron, Boyd and Choi \cite{ABC} gave a version of this result for homogeneous polynomials. Later, other extensions appeared (for instance, in \cite{DGG} for ideals of homogeneous polynomials and in \cite{DiFe} for bilinear mappings in operator spaces).

Now, the statement of the theorem in our setting is the following:

\begin{theorem}
	Suppose $X$ is symmetrically regular and  $X^{**}$ has the MAP. Consider a function
	$f\in\mathcal HL_0(B_X)$ with
	$L(f)=1$. Then, the following are
	equivalent:
	\begin{enumerate}
		\item[$(i)$] $f$ has a unique norm preserving extension to  $\mathcal HL_0(B_{X^{**}})$.
		\item[$(ii)$] The AB extension from $(\overline B_{\mathcal HL_0(B_X)}, w^*)$ to $(\overline B_{\mathcal HL_0(B_{X^{**}})}, w^*)$ is continuous at
		$f$.
		\item[$(iii)$] If the net $(f_{\alpha})\subset  \overline B_{\mathcal HL_0(B_X)}$ converges pointwise to $f$,
		then $(\widetilde f_\alpha)\subset  \overline B_{\mathcal HL_0(B_{X^{**}})}$ converges pointwise to
		$\widetilde f$.
	\end{enumerate}
\end{theorem}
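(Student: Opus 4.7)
The plan is to show \textbf{(ii)} $\Leftrightarrow$ \textbf{(iii)} as a direct translation of continuity through the description of weak-star convergence on the unit ball, and then close the loop \textbf{(iii)} $\Rightarrow$ \textbf{(i)} $\Rightarrow$ \textbf{(iii)} using the polynomial approximation that the MAP of $X^{**}$ provides, together with a standard subnet/compactness argument.

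The equivalence of (ii) and (iii) is essentially a definition chase. By Proposition~\ref{prop:diagram}(d) (applied to both $B_X$ and $B_{X^{**}}$), weak-star convergence and pointwise convergence coincide on bounded subsets of $\mathcal{HL}_0(B_X)$ and of $\mathcal{HL}_0(B_{X^{**}})$; and by symmetric regularity (Proposition~\ref{prop:symmetrically regular}) the AB extension sends $\overline{B}_{\mathcal{HL}_0(B_X)}$ into $\overline{B}_{\mathcal{HL}_0(B_{X^{**}})}$. So continuity of $f \mapsto \widetilde{f}$ at $f$ with weak-star topologies on both sides is literally the statement in (iii).

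To prove (iii) $\Rightarrow$ (i), I would start from an arbitrary norm-preserving extension $g \in \mathcal{HL}_0(B_{X^{**}})$ of $f$ and invoke Proposition~\ref{prop:aproximacion-bidual-map} (this is the only place where the MAP of $X^{**}$ is used) to obtain a net $(Q_\alpha) \subset \mathcal{P}_{f,0}(X)$ with $L(Q_\alpha|_{B_X}) \leq 1$ such that $\widetilde{Q}_\alpha \to g$ pointwise on $B_{X^{**}}$. Restricting to $B_X \subset B_{X^{**}}$ yields $Q_\alpha \to g|_{B_X} = f$ pointwise on $B_X$, so (iii) applies to the net $(Q_\alpha) \subset \overline{B}_{\mathcal{HL}_0(B_X)}$ and forces $\widetilde{Q}_\alpha \to \widetilde{f}$ pointwise; comparing the two pointwise limits of $(\widetilde{Q}_\alpha)$ gives $g = \widetilde{f}$.

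For the converse (i) $\Rightarrow$ (iii), let $(f_\alpha) \to f$ pointwise on $B_X$ with $L(f_\alpha) \leq 1$; by symmetric regularity $(\widetilde{f}_\alpha)$ lies in $\overline{B}_{\mathcal{HL}_0(B_{X^{**}})}$, which is pointwise compact by the $X^{**}$-version of Proposition~\ref{prop:diagram}(d). A standard subnet argument reduces the problem to showing that any pointwise subnet limit $g \in \overline{B}_{\mathcal{HL}_0(B_{X^{**}})}$ of $(\widetilde{f}_\alpha)$ equals $\widetilde{f}$. Such a $g$ satisfies $g|_{B_X} = f$ (the corresponding subnet of $(\widetilde{f}_\alpha)$ agrees with that of $(f_\alpha)$ on $B_X$), and $L(g) = L(f) = 1$ from $L(g) \leq 1$ (unit-ball membership) and $L(g) \geq L(f)$ (since $g$ extends $f$). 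Hence $g$ is a norm-preserving extension of $f$, and uniqueness in (i) gives $g = \widetilde{f}$. The main obstacle is really just keeping the bookkeeping with compactness and uniqueness straight; the key conceptual step is Proposition~\ref{prop:aproximacion-bidual-map}, which lets us realize the \emph{a priori} abstract extension $g$ as a pointwise limit of AB-extensions of polynomials on $X$, precisely the kind of data that hypothesis (iii) can process.
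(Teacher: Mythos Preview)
Your proof is correct and follows essentially the same approach as the paper: the paper cycles $(i)\Rightarrow(ii)\Rightarrow(iii)\Rightarrow(i)$, whereas you collapse $(ii)\Leftrightarrow(iii)$ into a single step via Proposition~\ref{prop:diagram}(d) and then prove $(i)\Leftrightarrow(iii)$, but the two nontrivial implications---the compactness/subnet argument for $(i)\Rightarrow(iii)$ (the paper's $(i)\Rightarrow(ii)$) and the use of Proposition~\ref{prop:aproximacion-bidual-map} for $(iii)\Rightarrow(i)$---are identical to those in the paper.
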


\begin{proof}
	$(i)\Rightarrow (ii)$ Let $(f_{\alpha})\subset  \overline B_{\mathcal HL_0(B_X)}$ be a net weak-star convergent to a function $f\in  \overline B_{\mathcal HL_0(B_X)}$. By the weak-star compactness of the ball $\overline{B}_{\mathcal HL_0(B_{X^{**}})}$ there is a subnet $(\widetilde f_\beta)$ weak-star convergent to a function $g\in \overline B_{\mathcal HL_0(B_{X^{**}})}$. Since for each $x\in B_X$, $\widetilde f_\alpha (x)=f_\alpha (x)\to f(x)$ we derive that $g|_{B_X}=f$. Also, since $L(g)\le 1=L(f)$, it follows that $L(g)=L(f)$, which means that $g$ is a norm preserving extension of $f$. By $(i)$ and Proposition \ref{prop:symmetrically regular} we obtain that $g=\widetilde f$. Now, a standard subnet argument shows that the whole net $(\widetilde f_\alpha)$ must converge weak-star to $\widetilde f$.
	
	$(ii)\Rightarrow (iii)$ It is clear due to Proposition \ref{prop:diagram} $(d)$.
	
	$(iii)\Rightarrow (i)$ Let $g\in \overline B_{\mathcal HL_0(B_{X^{**}})}$ be a norm preserving extension of $f$. By Proposition \ref{prop:aproximacion-bidual-map} there is a net $(Q_\alpha)\subset \mathcal P_{f,0}(X,Y)$ with $L(Q_\alpha|_{B_X})\le 1$ satisfying $\widetilde{Q}_\alpha(x^{**})\to g(x^{**})$ for all $x^{**}\in B_{X^{**}}$. But for any $x\in B_X$ we have $\widetilde Q_\alpha (x)=Q_\alpha (x)\to g(x)=f(x)$. Now, $(iii)$ clearly  implies that $g=\widetilde f$.
\end{proof}

All the numbered results of Sections \ref{section: relation} and \ref{Section: local complementation} have easily adapted analogous versions for $\mathcal G$ and $\mathcal HL$ instead of $\mathcal G_0$ and $\mathcal HL_0$.

\subsection{The case of $\mathcal H^\infty(B_X)$ and $\mathcal G^\infty(B_X)$}
The arguments of this section can be canonically translated to prove analogous results for the case of $\mathcal G^{\infty}$ instead of $\mathcal G_0$ (and $\mathcal H^\infty$ instead of $\mathcal HL_0$). Moreover, for this case the hypothesis of symmetrical regularity is unnecessary. Let us state the results without proofs, since they are similar to the previous arguments.

\begin{theorem}
	If $X^{**}$ has the MAP then $\mathcal G^{\infty}(B_{X^{**}})$ is isometric to a 1-locally complemented subspace of $\mathcal G^{\infty}(B_X)^{**}$ and $\mathcal H^\infty(B_{X^{**}})$ is isometric to a 1-complemented subspace of $\mathcal H^\infty(B_X)^{**}$.
\end{theorem}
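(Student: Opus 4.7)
My plan is to run the proof of the earlier $\mathcal{HL}_0$ theorem line by line, replacing the Lipschitz norm by the sup norm throughout and observing that symmetric regularity is used in that argument only to guarantee $L(\widetilde f)=L(f)$ and $L(\widetilde{Q}_\alpha|_{B_{X^{**}}})=L(Q_\alpha|_{B_X})$; for the $\mathcal H^\infty$ norm, the Davie--Gamelin theorem \cite{DG} already gives $\|\widetilde f\|_{B_{X^{**}}}=\|f\|_{B_X}$ for every $f\in\mathcal H^\infty(B_X)$ with no hypothesis on $X$, so the whole argument goes through verbatim.

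First I would define
\[
\Theta\colon B_{X^{**}}\longrightarrow \mathcal G^\infty(B_X)^{**}=\mathcal H^\infty(B_X)^{*},\qquad
\Theta(x^{**})(f)=\widetilde f(x^{**}),
\]
which is weak-star holomorphic (since $\Theta(\,\cdot\,)(f)=\widetilde f\in\mathcal H^\infty(B_{X^{**}})$) hence holomorphic, and bounded by $1$ because $|\widetilde f(x^{**})|\le\|\widetilde f\|=\|f\|$. Linearizing through $\mathcal G^\infty(B_{X^{**}})$ (i.e.\ Mujica's analogue of Proposition~\ref{prop:diagram}(c)) produces $T_\Theta\in\mathcal L(\mathcal G^\infty(B_{X^{**}}),\mathcal G^\infty(B_X)^{**})$ with $\|T_\Theta\|\le 1$, which will be the candidate isometric embedding.

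Next I would prove the $\mathcal H^\infty$ version of Proposition~\ref{prop:aproximacion-bidual-map}: for every $f\in \overline B_{\mathcal H^\infty(B_{X^{**}})}$ there is a net $(Q_\alpha)\subset \mathcal P_{f,0}(X,\mathbb C)$ with $\|Q_\alpha\|_{B_X}\le 1$ such that $\widetilde Q_\alpha(x^{**})\to f(x^{**})$ pointwise on $B_{X^{**}}$. This uses the MAP of $X^{**}$ via \cite[Cor.~1]{CG} to supply a net of finite rank $t_\alpha\colon X\to X^{**}$ with $\|t_\alpha\|\le 1$ and $t_\alpha^{**}\to \mathrm{Id}_{X^{**}}$ pointwise, combined with the Mujica density result that gives a polynomial $P$ with $\|P\|_{B_{X^{**}}}\le 1$ approximating $f$ in the $\tau_0$--topology; then $Q_\alpha=P\circ t_\alpha$, and crucially $\widetilde Q_\alpha=P\circ t_\alpha^{**}$ because $t_\alpha$ has finite rank (so no symmetric regularity issue arises in extending $t_\alpha$).

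Then I would set up the two commutative triangles coming from the linearizations of $Q_\alpha|_{B_X}$ and $\widetilde Q_\alpha|_{B_{X^{**}}}$, giving $T_{Q_\alpha}\in\mathcal L(\mathcal G^\infty(B_X),\mathcal G^\infty(B_{X^{**}}))$ and $T_{\widetilde Q_\alpha}\in\mathcal L(\mathcal G^\infty(B_{X^{**}}),\mathcal G^\infty(B_{X^{**}}))$ with $\|T_{Q_\alpha}\|=\|Q_\alpha\|_{B_X}\le 1=\|\widetilde Q_\alpha\|_{B_{X^{**}}}$ (equality of these norms now being Davie--Gamelin, not symmetric regularity). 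The key identity to verify is
\[
T_{\widetilde Q_\alpha}=T_{Q_\alpha}^{**}\circ T_\Theta,
\]
which, applied to $\delta_{X^{**}}(x^{**})$ and paired with $f\in\mathcal H^\infty(B_{X^{**}})$, reduces to the pointwise equality $\widetilde{T_f\circ Q_\alpha}(x^{**})=T_f^{**}(\widetilde Q_\alpha(x^{**}))$; since $T_f$ is linear and $\widetilde Q_\alpha$ has finite dimensional range, this is the standard compatibility of AB extension with post-composition by a continuous linear map, and here symmetric regularity is \emph{not} needed. Since $(T_{\widetilde Q_\alpha})$ is a bounded net of finite rank operators witnessing the MAP of $\mathcal G^\infty(B_{X^{**}})$ (this is Mujica's AP theorem \cite{Mu}), \cite[Lem.~4]{CG} then yields that $T_\Theta$ embeds $\mathcal G^\infty(B_{X^{**}})$ isometrically as a $1$-locally complemented subspace of $\mathcal G^\infty(B_X)^{**}$.

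Finally, the second statement follows by duality: $1$-local complementation of $Z$ in $W$ is equivalent to $1$-complementation of $Z^{*}$ in $W^{*}$ \cite[Lem.~3]{CG}, so taking adjoints gives $\mathcal H^\infty(B_{X^{**}})=\mathcal G^\infty(B_{X^{**}})^{*}$ as a $1$-complemented subspace of $\mathcal G^\infty(B_X)^{***}=\mathcal H^\infty(B_X)^{**}$. I expect the main obstacle to be purely bookkeeping: checking that everything $\tau_0$-continuous on the $\mathcal H^\infty$ ball (rather than the $\mathcal{HL}_0$ ball) lands in $\mathcal G^\infty$ rather than merely in $\mathcal H^\infty{}^{*}$, so that Mujica's linearization is legitimately applied at each step; no genuinely new analytic content is needed beyond invoking Davie--Gamelin in place of Proposition~\ref{prop:symmetrically regular}.
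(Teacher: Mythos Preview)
Your proposal is correct and is precisely the ``canonical translation'' the paper alludes to (the paper gives no proof for this theorem, stating only that the arguments of the section carry over with the hypothesis of symmetric regularity removed). One cosmetic slip: when you state the $\mathcal H^\infty$ version of Proposition~\ref{prop:aproximacion-bidual-map} you write $(Q_\alpha)\subset\mathcal P_{f,0}(X,\mathbb C)$, but in the step where you form $T_{Q_\alpha}\in\mathcal L(\mathcal G^\infty(B_X),\mathcal G^\infty(B_{X^{**}}))$ you are implicitly applying the approximation to the vector-valued map $\delta_{X^{**}}\in\mathcal H^\infty(B_{X^{**}},\mathcal G^\infty(B_{X^{**}}))$, so the $Q_\alpha$ there take values in $\mathcal G^\infty(B_{X^{**}})$; your argument works once this is made explicit.
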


The following question is posed in \cite{CG}: when $X^{**}$ has the BAP, is it true that $\mathcal H^\infty(B_{X^{**}})$ is isomorphic to a complemented subspace of $\mathcal H^\infty(B_X)^{**}$?
Note that the previous theorem answers affirmatively this open question for the case $X^{**}$ having MAP.

\begin{theorem}
	Suppose  $X^{**}$ has the MAP. Consider a function
	$f\in\mathcal H^\infty(B_X)$ with
	$\|f\|=1$. Then, the following are
	equivalent:
	\begin{enumerate}[(i)]
		\item $f$ has a unique norm preserving extension to  $\mathcal H^\infty(B_{X^{**}})$.
		\item The AB extension from $(\overline B_{\mathcal H^\infty(B_X)}, w^*)$ to $(\overline B_{\mathcal H^\infty(B_{X^{**}})}, w^*)$ is continuous at
		$f$.
		\item If the net $(f_{\alpha})\subset  \overline B_{\mathcal H^\infty(B_X)}$ converges pointwise to $f$,
		then $(\widetilde f_\alpha)\subset  \overline B_{\mathcal H^\infty(B_{X^{**}})}$ converges pointwise to
		$\widetilde f$.
	\end{enumerate}
\end{theorem}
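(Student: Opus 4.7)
The plan is to mimic the proof of the previous theorem for $\mathcal HL_0$, substituting the sup norm for the Lipschitz norm throughout. The key simplification is that the Davie--Gamelin result already gives $\|\widetilde f\|=\|f\|$ for every $f\in\mathcal H^\infty(B_X)$ without any regularity hypothesis, so the role previously played by Proposition~\ref{prop:symmetrically regular} is available for free. Three ingredients will drive the argument: (a) the AB extension is an isometry $E\colon\mathcal H^\infty(B_X)\to\mathcal H^\infty(B_{X^{**}})$; (b) the $\mathcal H^\infty$ analog of Proposition~\ref{prop:aproximacion-bidual-map}, namely, whenever $X^{**}$ has the MAP, each $f\in\overline{B}_{\mathcal H^\infty(B_{X^{**}})}$ is a pointwise limit on $B_{X^{**}}$ of AB extensions $\widetilde Q_\alpha$ of finite-type polynomials $Q_\alpha\in\mathcal P_f(X)$ with $\|Q_\alpha|_{B_X}\|\le 1$ (precomposition with the Cabello--Garc\'{\i}a approximants $t_\alpha\colon X\to X^{**}$, no symmetric regularity needed because $\widetilde{Q_\alpha}=Q_\alpha\circ t_\alpha^{**}$ is polynomial in $t_\alpha^{**}$); and (c) the $\mathcal G^\infty$-analog of Proposition~\ref{prop:diagram}$(d)$, namely that on bounded subsets of $\mathcal H^\infty(B_X)$ the weak-star topology coincides with pointwise convergence.

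For $(i)\Rightarrow(ii)$, I would start with a net $(f_\alpha)\subset\overline B_{\mathcal H^\infty(B_X)}$ weak-star convergent to $f$. By weak-star compactness of $\overline B_{\mathcal H^\infty(B_{X^{**}})}$, any subnet of $(\widetilde f_\alpha)$ admits a further weak-star convergent subnet with limit $g\in\overline B_{\mathcal H^\infty(B_{X^{**}})}$. On $B_X$ one has $\widetilde f_\alpha(x)=f_\alpha(x)\to f(x)$, hence $g|_{B_X}=f$; since $\|g\|\le 1=\|f\|$, the extension $g$ is norm-preserving. The hypothesis $(i)$, together with the fact that $\widetilde f$ is itself a norm-preserving extension of $f$, forces $g=\widetilde f$. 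A standard subnet argument then yields $\widetilde f_\alpha\wsconv\widetilde f$.

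The implication $(ii)\Rightarrow(iii)$ is immediate from the $\mathcal G^\infty$ version of Proposition~\ref{prop:diagram}$(d)$ applied in both $\mathcal H^\infty(B_X)$ and $\mathcal H^\infty(B_{X^{**}})$.

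For $(iii)\Rightarrow(i)$, take any norm-preserving extension $g\in\overline B_{\mathcal H^\infty(B_{X^{**}})}$ of $f$. Applying the $\mathcal H^\infty$ version of Proposition~\ref{prop:aproximacion-bidual-map} to $g$ produces a net $(Q_\alpha)\subset\mathcal P_f(X)$ with $\|Q_\alpha|_{B_X}\|\le 1$ such that $\widetilde Q_\alpha(x^{**})\to g(x^{**})$ for every $x^{**}\in B_{X^{**}}$. Restricting to $B_X$ gives $Q_\alpha(x)\to f(x)$ pointwise, so by hypothesis $(iii)$ the extensions satisfy $\widetilde Q_\alpha\to\widetilde f$ pointwise on $B_{X^{**}}$. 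Comparing the two pointwise limits yields $g=\widetilde f$, and uniqueness follows.

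I do not anticipate a genuine obstacle; the one point that deserves care is verifying the $\mathcal H^\infty$ analog of Proposition~\ref{prop:aproximacion-bidual-map} without invoking symmetric regularity, but this reduces to the identity $\widetilde{Q\circ t_\alpha}=Q\circ t_\alpha^{**}$ valid for any $Q\in\mathcal P(X)$ together with the norm bound $\|Q\circ t_\alpha|_{B_X}\|\le\|Q|_{B_{X^{**}}}\|\,\|t_\alpha\|$, and then an appeal to the $\mathcal H^\infty$ version of Proposition~\ref{Prop:bola unidad} to pass from general $f\in\overline B_{\mathcal H^\infty(B_{X^{**}})}$ to polynomials.
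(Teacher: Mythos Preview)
Your proposal is correct and follows exactly the approach the paper intends: the paper does not give a separate proof of this theorem but explicitly states that ``the arguments of this section can be canonically translated'' and that ``for this case the hypothesis of symmetrical regularity is unnecessary,'' which is precisely what you carry out by replacing the Lipschitz norm with the sup norm and invoking Davie--Gamelin in place of Proposition~\ref{prop:symmetrically regular}. The only cosmetic slips are in the last paragraph, where you write $Q\in\mathcal P(X)$ but mean $Q\in\mathcal P(X^{**})$, and where the norm bound should simply read $\|Q\circ t_\alpha|_{B_X}\|\le\|Q|_{B_{X^{**}}}\|$ (since $t_\alpha(B_X)\subset B_{X^{**}}$); neither affects the argument.
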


\section{Appendix}

Finally we will prove the following result as promised in Section \ref{sec:HL}.

\begin{theorem}\label{th:embedl_inftyother} There exists an isomorphism into  $F\colon \ell_\infty\to \mathcal H_0^\infty (\D)$ such that $F(\ell_\infty\setminus\{0\})\subset H_0^\infty (\D)\setminus \mathcal HL_0(\D)$ and $F(c_0\setminus\{0\})\subset \mathcal A(\D)\setminus \mathcal HL_0(\D)$. 
\end{theorem}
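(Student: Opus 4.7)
The plan is to refine the interpolation-based construction used in the proof of Theorem~\ref{th:embedl_infty}(a) by replacing the Carleson $H^\infty$-dual basis with one that lies entirely in the disc algebra $\mathcal A(\D)$, and by re-encoding the input $c\in\ell_\infty$ as a damped oscillatory sequence $x_c$, so that $x_c\in c_0$ precisely when $c\in c_0$ and every nonzero $c$ still produces arbitrarily large Lipschitz quotients in $F(c)$.

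Fix $z_n=1-2^{-n}$, a Carleson interpolating sequence for $H^\infty(\D)$ whose unique boundary cluster point is $1$, and let $(h_n^C)\subset H^\infty(\D)$ denote the associated Carleson dual basis, so $h_n^C(z_m)=\delta_{nm}$ and $\sum_n|h_n^C(z)|\le M$ on $\D$. Since $(1-z)/(1-z_n)$ has non-negative real part on $\overline{\D}$, the principal $n$-th root $q_n(z):=\bigl((1-z)/(1-z_n)\bigr)^{1/n}$ belongs to $\mathcal A(\D)$ with $q_n(z_n)=1$, $q_n(1)=0$, $q_n(0)=2$, and $\|q_n\|_\infty\le 4$ (the $1/n$ exponent compensates $|1-z_n|=2^{-n}$). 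The key point is that the factor $q_n$ kills the boundary oscillation of $h_n^C$ at $1$, so $\tilde h_n:=q_n h_n^C$ lies in $\mathcal A(\D)$; moreover $\tilde h_n(z_m)=\delta_{nm}$ and $\sum_n|\tilde h_n(z)|\le 4M$. A final shift $\hat h_n:=\tilde h_n-\tilde h_n(0)\omega$, where $\omega(z):=B(z)(1-z)/B(0)$ with $B$ the Blaschke product on $\{z_m\}_m$ (so that $\omega\in\mathcal A(\D)$, $\omega(0)=1$, $\omega(z_m)=0$, and $\omega(1)=0$), places $\hat h_n$ in $\mathcal A_0(\D)$; the bound $\sum_n|\tilde h_n(0)|\le 2M$, coming from Carleson's estimate at $z=0$ together with $q_n(0)=2$, forces $\sum_n|\hat h_n(z)|\le C$ on $\overline\D$. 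The operator
\[
S\colon\ell_\infty\to H^\infty_0(\D),\qquad S(a):=\sum_n a_n\hat h_n,
\]
is then a bounded linear isomorphism onto its image (since $S(a)(z_n)=a_n$), and for $a\in c_0$ the series converges uniformly on $\overline\D$, so $S(c_0)\subset\mathcal A_0(\D)$.

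Next, partition $\N$ as a disjoint union of infinite sets $N_k=(n_{k,1},n_{k,2},\ldots)$ enumerated diagonally in $\N^2$, so that $n_{k_0,i}\ge i^2/2$ for each fixed $k_0$ and large $i$; for $c=(c_k)\in\ell_\infty$ set $x_c(n_{k,i}):=(-1)^ic_k/i$. One verifies that $\|x_c\|_\infty=\|c\|_\infty$, that $c\mapsto x_c$ is an into isomorphism (because $x_c(n_{k,1})=-c_k$), and the elementary bound $|x_c(n_{k,i})|\le\max(|c_k|,\|c\|_\infty/i)$ together with $\max(k,i)\to\infty$ as $n\to\infty$ forces $x_c\in c_0\iff c\in c_0$. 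Put $F(c):=S(x_c)$; then $F\colon\ell_\infty\to H^\infty_0(\D)$ is an into isomorphism with $F(c)(0)=0$, and $F(c_0)\subset\mathcal A_0(\D)$ by Step~1. For any $c\neq 0$ pick $k_0$ with $c_{k_0}\neq 0$: the values $F(c)(z_{n_{k_0,i}})=(-1)^ic_{k_0}/i$ oscillate along the subsequence $(z_{n_{k_0,i}})_i\to 1$, with consecutive value gap at least $|c_{k_0}|/i$ and $|z_{n_{k_0,i+1}}-z_{n_{k_0,i}}|\le 2^{-n_{k_0,i}}\le 2^{-i^2/2}$, so the Lipschitz quotient is at least $(|c_{k_0}|/i)\cdot 2^{i^2/2}\to\infty$ and $F(c)\notin\mathcal HL_0(\D)$.

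The main obstacle is Step~1: producing a disc-algebra dual basis $(\hat h_n)$ at an interpolating sequence whose cluster point is on $\partial\D$, while preserving the uniform pointwise bound $\sum|\hat h_n(z)|\le C$. The correction $q_n$ has to simultaneously vanish at the cluster point $1$ (to secure continuity there), equal $1$ at $z_n$ (to preserve interpolation), and be uniformly bounded in $n$ (so that Carleson's pointwise estimate can be propagated); the choice $q_n=((1-z)/(1-z_n))^{1/n}$ is exactly tuned so that the $1/n$ exponent compensates $|1-z_n|=2^{-n}$, yielding $\|q_n\|_\infty\le 4$ independently of how close $z_n$ is to $1$. The remaining verifications are routine.
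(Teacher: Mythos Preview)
Your approach is quite different from the paper's and has a real gap at the step you yourself flag as the ``main obstacle''. You assert that $\tilde h_n:=q_n h_n^C\in\mathcal A(\D)$ because ``$q_n$ kills the boundary oscillation of $h_n^C$ at $1$''. But $q_n$ vanishes only at $z=1$, so this argument works only if you already know that $h_n^C$ extends continuously to $\overline\D\setminus\{1\}$. Nothing in the standard Carleson theory gives you that: the existence of a dual basis with $\sum_n|h_n^C|\le M$ is usually obtained abstractly (open mapping theorem on the restriction map $H^\infty\to\ell_\infty$), and such $h_n^C$ are a priori merely $H^\infty$ functions, with no boundary continuity away from the cluster set of the interpolation nodes. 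So the product $q_n h_n^C$ need not lie in $\mathcal A(\D)$, and with it the whole $c_0$-half of your statement collapses. The rest of the argument (the encoding $c\mapsto x_c$, the Lipschitz blow-up via $|z_{n_{k_0,i+1}}-z_{n_{k_0,i}}|\le 2^{-n_{k_0,i}}$ against the $|c_{k_0}|/i$ oscillation) is fine, but it sits on this unproven foundation.

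If you want to salvage the route, you would need an \emph{explicit} dual basis---for instance P.~Jones's formula---and then verify by hand that every ingredient (the partial Blaschke products $B_n$, the rational weights, and the exponential factor) extends continuously to $\overline\D\setminus\{1\}$ for the specific sequence $z_n=1-2^{-n}$. This is plausible but it is genuine work, not a ``routine verification''. By contrast, the paper sidesteps interpolation theory entirely: it places a sequence $(\lambda_n)$ on the \emph{circle} (not in $\D$) converging to $1$, builds explicit functions $f_{\lambda_n}(z)=1+(\overline{\lambda_n}z-1)e^{1/(\overline{\lambda_n}z-1)}$ that are visibly in $\mathcal A(\D)$ with unbounded derivative near $\lambda_n$, localizes each with a high power of the peaking polynomial $\varphi_n(z)=(\overline{\lambda_n}z+1)/2$, and sets $F(a)=\sum_n a_n f_{\lambda_n}\varphi_n^{k_n}$. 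Here the non-Lipschitzness comes from the singular behaviour of a single summand, not from interpolation-driven oscillation, and every estimate (membership in $\mathcal A(\D)$, the $\ell_\infty$-isomorphism, the derivative blow-up) is checked directly from the explicit formulas.
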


Note that one can easily prove a version for holomorphic functions on $B_X$ for any $X$ using the same ideas as in the proof of  Theorem \ref{th:embedl_infty}.

In what follows, we will use the function $\varphi_\lambda\colon \mathbb C\to\mathbb C$ given by
\[
\varphi_\lambda(z)=\frac{\overline{\lambda}z+1}{2}.
\]
It is a standard fact that
\begin{equation}\label{eq:varphi}\varphi_\lambda(\lambda)=1, \quad |\varphi_\lambda(z)|<1 \text{ for all } z\in \overline{\D}\setminus\{\lambda\}.
\end{equation} We also need the following technical lemma, which in particular provides another example of a non-Lipschitz function in the disc algebra $\mathcal A(\mathbb D)$.

\begin{lemma} \label{I} Fix $\lambda\in \C$ with $|\lambda|=1$ and define $f_\lambda\colon \C \to \C$ by
	\[f_\lambda(z)=\begin{cases}
		1+(\overline{\lambda}z-1)e^{1/(\overline{\lambda}z-1)} & \text{if $z\neq \lambda$}\\
		1 & \text{if $z= \lambda$}.
	\end{cases}
	\]
	Then
	\begin{enumerate}[(a)]
		\item\label{flambda_a} $f_\lambda$ is holomorphic in $\C\setminus\{\lambda\}$.
		\item\label{flambda_b} The restriction of $f_\lambda$ to $\overline{\D}$ belongs to $\mathcal{A}(\D)\setminus \mathcal HL(\D)$.
		\item\label{flambda_c} $|f_\lambda(z)|\leq 3$ for all $z\in \overline{\D}.$
		\item\label{flambda_d} If $0<s<1$, then $|f'_\lambda(z)|\leq \frac{s+1}{s}$ for all $z\in \D$ such that $|z-\lambda|\geq s$.
		\item \label{flambda_e} Given $k\in \N$ and $0<\delta<1$, we have that
		\[
		\sup_{z\in \D(\lambda,\delta)\cap\D}|\big(f_\lambda\cdot\varphi_\lambda^k\big)'(z)|=+\infty.
		\]
	\end{enumerate}
\end{lemma}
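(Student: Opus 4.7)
Write $w = w(z) := \overline{\lambda}z - 1$, and note that for $z \in \overline{\mathbb{D}}$ we have $\operatorname{Re}(w) = \operatorname{Re}(\overline{\lambda}z) - 1 \leq |z|-1 \leq 0$, with strict inequality when $|z|<1$ or when $z\in\partial\mathbb{D}\setminus\{\lambda\}$. Consequently
\[
\operatorname{Re}(1/w) = \operatorname{Re}(w)/|w|^2 \leq 0, \qquad |e^{1/w}| = e^{\operatorname{Re}(1/w)} \leq 1.
\]
This single estimate drives most of the lemma. Part (\ref{flambda_a}) is immediate: $w$ is entire and vanishes only at $\lambda$, and $e^{1/w}$ is the composition of entire functions with $1/w$, so $f_\lambda$ is holomorphic on $\mathbb{C}\setminus\{\lambda\}$. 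For continuity at $\lambda$ in part (\ref{flambda_b}), the bound $|e^{1/w}| \leq 1$ gives $|f_\lambda(z)-1|\leq |w(z)|\to 0$ as $z\to\lambda$ inside $\overline{\mathbb{D}}$, so $f_\lambda|_{\overline{\mathbb{D}}}\in\mathcal{A}(\mathbb{D})$; the failure of the Lipschitz property will follow at once from (\ref{flambda_e}) with $k=0$, so it is convenient to postpone it. Part (\ref{flambda_c}) uses $|w|\leq 2$ on $\overline{\mathbb{D}}$ together with $|e^{1/w}|\leq 1$ to give $|f_\lambda(z)|\leq 1 + 2 = 3$.

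For (\ref{flambda_d}) I will compute $f_\lambda'$ once and for all: the product rule yields
\[
f_\lambda'(z) = \overline{\lambda}\, e^{1/w}\Bigl(1 - \tfrac{1}{w}\Bigr).
\]
When $|z-\lambda|\geq s$, then $|w| = |\overline{\lambda}||z-\lambda|\geq s$, so $|1-1/w|\leq 1 + 1/s = (s+1)/s$, and combining with $|\overline{\lambda}|=1$ and $|e^{1/w}|\leq 1$ yields the desired bound.

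The main obstacle is (\ref{flambda_e}). Write
\[
(f_\lambda \varphi_\lambda^k)'(z) = f_\lambda'(z)\,\varphi_\lambda(z)^k + k\,f_\lambda(z)\,\varphi_\lambda(z)^{k-1}\,\varphi_\lambda'(z).
\]
Since $\varphi_\lambda' = \overline{\lambda}/2$ is constant and $f_\lambda,\varphi_\lambda$ are bounded on $\overline{\mathbb{D}}$ by (\ref{flambda_c}) and \eqref{eq:varphi}, the second summand stays bounded near $\lambda$. Also $\varphi_\lambda(z)^k\to 1$ as $z\to\lambda$. Hence it suffices to exhibit, for each $\delta\in(0,1)$, points $z\in\mathbb{D}(\lambda,\delta)\cap\mathbb{D}$ along which $|f_\lambda'(z)|\to\infty$. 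The idea is to approach $\lambda$ tangentially so that $\operatorname{Re}(1/w)$ stays bounded while $|\operatorname{Im}(1/w)|$ blows up: writing $z_\varepsilon = \lambda + \lambda(-\varepsilon^2 + i\varepsilon)$ so that $w(z_\varepsilon) = -\varepsilon^2 + i\varepsilon$, a direct check gives $|z_\varepsilon|^2 = 1 - \varepsilon^2 + \varepsilon^4 < 1$ for small $\varepsilon$, so $z_\varepsilon\in \mathbb{D}(\lambda,\delta)\cap\mathbb{D}$ eventually. Then
\[
\tfrac{1}{w(z_\varepsilon)} = \tfrac{-\varepsilon^2 - i\varepsilon}{\varepsilon^2(\varepsilon^2+1)} = -\tfrac{1}{\varepsilon^2+1} - i\,\tfrac{1}{\varepsilon(\varepsilon^2+1)},
\]
so $|e^{1/w(z_\varepsilon)}| = e^{-1/(\varepsilon^2+1)}\geq e^{-1}$ while $|1-1/w(z_\varepsilon)|\sim 1/\varepsilon$. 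The formula for $f_\lambda'$ then gives $|f_\lambda'(z_\varepsilon)|\to\infty$ as $\varepsilon\to 0^+$, finishing (\ref{flambda_e}) and, a posteriori, the remaining assertion in (\ref{flambda_b}) that $f_\lambda\notin \mathcal{HL}(\mathbb{D})$.
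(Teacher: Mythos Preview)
Your proof is correct and follows essentially the same route as the paper: both hinge on the estimate $|e^{1/w}|\le 1$ (from $\operatorname{Re} w\le 0$ on $\overline{\D}$), the explicit derivative formula $f_\lambda'=\overline{\lambda}\,e^{1/w}(1-1/w)$, and a tangential approach to $\lambda$ along which $\operatorname{Re}(1/w)$ stays bounded while $|1/w|\to\infty$. The only cosmetic differences are that the paper first reduces to $\lambda=1$ and uses the sequence $z_n=\cos\theta_n\,e^{i\theta_n}$ (giving $\operatorname{Re}(1/(z_n-1))=-1$ exactly), whereas you keep a general $\lambda$ via the substitution $w=\overline{\lambda}z-1$ and take $z_\varepsilon=\lambda(1-\varepsilon^2+i\varepsilon)$; and the paper proves the unboundedness of $f_\lambda'$ directly as part of (\ref{flambda_b}) rather than deferring it to (\ref{flambda_e}).
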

\begin{proof} 
	A standard computation shows that $\ref{flambda_a}$ holds. Now, to prove the rest of the claims it is enough to consider the case $\lambda=1$. Denote $f=f_1$ and take $z=a+ib\in \overline{\D}\setminus  \{1\}$, with $a,b\in \R$. We have that
	\[
	\Big|e^{\frac{1}{z-1}}\Big|=e^{\Re\frac{1}{z-1}}=e^{\frac{a-1}{(a-1)^2+b^2}}\leq e^0=1.
	\]
	Hence $f$, defined as $f(z)=1+(z-1)e^{\frac{1}{z-1}}$ is holomorphic on $\C\setminus \{1\}$ and continuously extends to $\overline{\D}$. Further $|f(z)|\leq 3$ for every  $z\in \overline{\D}$.
	Let us show that $f$ is not a Lipschitz function. For that, it is enough to check that $f'$ is not bounded  on $\D$. Taking a null sequence $0<\theta_n<1$ and setting $z_n:=\cos \theta_n(\cos \theta_n+i\sin \theta_n)$,  we obtain that the sequence $(z_n)\subset\D$ converges to $1$ and
	\[
	|f'(z_n)|=\Big|\frac{z_n-2}{z_n-1}\Big| e^{\Re\Big(\frac{1}{z_n-1}\Big)}=\Big|\frac{z_n-2}{z_n-1}\Big|e^{-1}.
	\]
	Consequently, $\lim_{n\to+\infty} |f'(z_n)|=+\infty$. Thus far we have proved $\ref{flambda_a}$, $\ref{flambda_b}$ and $\ref{flambda_c}$.  Let's check $\ref{flambda_d}$. We have
	\[
	|f'(z)|=\Big|\frac{z-2}{z-1}\Big|\cdot\Big|e^{\frac{1}{z-1}}\Big|\leq 1+\frac{1}{|z-1|} ,
	\]
	for all $z\in \D$. Hence, if $0<s<1$ and $z\in \D$ with $|z-1|\geq s$ we have that $|f'(z)|\leq \frac{s+1}{s}$.
	
	Finally  $\ref{flambda_e}$ is a consequence of   $(f\varphi^k)'(z)=f'(z)\varphi^k(z)+f(z)(\varphi^k)'(z)$ for all $z\in \C\setminus\{1\}$.
\end{proof}

\begin{proof}[Proof of Theorem \ref{th:embedl_inftyother}]
	To begin with, we choose a sequence $(\lambda_n)\subset \mathbb C \setminus\{1\}$ convergent to $1$ with $|\lambda_n|=1$ and $\lambda_n\neq \lambda_m$ for every $n\neq m$.
	Consider the function $\Phi\colon\C^2\to \C$ and $\varphi_n\colon\C\to \C$, $\varphi_n(z):=\Phi(z, \lambda_n)$ defined as
	$$
	\Phi(z,\lambda)=\frac{\overline{\lambda}z+1}{2}
	$$
	and, for each $p\in\N$, the compact subset of $\C^2$
	\[
	K_p = \{(\lambda_p,\lambda_n): n\in \mathbb N, n\neq p\}\cup\{(\lambda_p,1)\}.
	\]
	We have $
	\big|\Phi(z,\lambda)\big|<1$
	for every $(z,\lambda)\in K_p$ by \eqref{eq:varphi}, and $\Phi$ is continuous on $\C^2$. Hence, there exists $0<s_p<1$ such that $
	\big|\Phi(z,\lambda)\big|<1$   for every $(z,\lambda)\in K_p+\overline{\D}((0,0),s_p)$. In particular,
	\begin{equation}\label{II}
		|\varphi_n(z)|=\big|\Phi(z,\lambda_n)\big|<1,
	\end{equation}
	for all $z\in \overline{\D}(\lambda_p,s_p)$ and all $n\neq p$.
	
	Now, since the sequence $(\lambda_n)$ is convergent to $1$ we can find a sequence of positive numbers $(r_n)$ that tends to $0$  such that $0<2r_n<s_n$ for all $n\in \mathbb N$ and such that $  \overline{\D}(\lambda_n, 2r_n)\cap \overline{\D}(\lambda_p, 2r_p)=\emptyset,$
	for all $n\neq p$. Moreover, as $(r_n)$ converges to 0,  for each $n\in \mathbb N$ the set
	\[
	L_n:=\bigcup_{p\neq n} \overline{\D}(\lambda_p, 2r_p)\cup\{1\},
	\]
	is also a compact subset of $\C$, (although it is not a subset of $\overline{\D}$) and $|\varphi_n(z)|<1$  for all $z\in L_n$. Since $|\varphi_n|$ is continuous on $\C$ we obtain that
	\[
	\max \{|\varphi_n(z)|\,:\, z\in C_n\cup L_n\}<1,
	\]
	for all $n$, where $C_n=\overline{\D}\setminus \D(\lambda_n, r_n)$.
	As a consequence, for each $n$ the sequence $\Big(\varphi_n^k\Big)_{k=1}^\infty$ converges uniformly to 0 on $C_n\cup L_n$ and we can find a $k_n\in \N$ such that
	\begin{equation}\label{III}
		|\varphi_n^{k_n}(z)|<\frac{r_n}{3^{n+1}},
	\end{equation}
	for every $z\in C_n\cup L_n$.
	
	We denote $f_n:=f_{\lambda_n}$, for $n\in \mathbb N$ and we define $F\colon \ell_\infty\longrightarrow \mathcal{H}^\infty(\D)$ by
	\[
	F(a_n):=\sum_{n=1}^{\infty}a_nf_n\varphi_n^{k_n},
	\]
	For each $(a_n)\in \ell_\infty$ the series $F(a_n)(z)$ is  convergent for each $z\in \overline{\D}$. To see this, we first suppose that
	
	a) $z\in \overline{\D}\setminus\left(\bigcup_{n=1}^\infty \overline{\mathbb D}(\lambda_n,r_n)\right)$. In that case, by \eqref{III} and Lemma \ref{I}. $\ref{flambda_c}$,
	\begin{equation}\label{IV}
		\sum_{n=1}^{\infty}|a_nf_n(z)\varphi_n^{k_n}(z)|\leq \sum_{n=1}^{\infty} 3|a_n|\frac{r_n}{3^{n+1}}\leq \frac{1}{2}\|(a_n)\|_\infty.
	\end{equation}
	Hence $F(a_n)(z)$ converges. Moreover,  the series $F(a_n)$ converges absolutely and uniformly on the open set  $\D\setminus \Big(\bigcup_{n=1}^\infty \overline{\D}(\lambda_n,r_n)\Big)$. Thus $F(a_n)$ is holomorphic in that open set.

	If this does not occur, then it must be that we have:
	
	b)  There exists a unique $n_0\in \N$ such that $z\in \D(\lambda_{n_0}, 2r_{n_0})$. By \eqref{III}, for every $u\in \D(\lambda_{n_0}, 2r_{n_0})$ we have that
	\[
	|a_nf_n(u)\varphi_n^{k_n}(u)|\leq  3|a_n|\frac{r_n}{3^{n+1}}<\frac{|a_n|}{3^n},
	\]
	for all $n\neq n_0$ and
	\[
	|a_{n_0}f_{n_0}(u)\varphi_{n_0}^{k_{n_0}}(u)|\leq 3|a_{n_0}|.
	\]
	Hence,
	\begin{equation}\label{V}
		\sum_{n=1}^{\infty}|a_nf_n(z)\varphi_n^{k_n}(z)|\leq 4\|(a_n)\|_\infty,
	\end{equation}
	and we have obtained that for every $z\in\D(\lambda_{n_0}, 2r_{n_0})$, $F(a_n)(z)$ exists and in fact $|F(a_n)(z)|\leq 4\|(a_n)\|_\infty$. But our argument shows that the series  $F(a_n)$ is  absolutely and uniformly convergent in the open disc $\D(\lambda_{n_0}, 2r_{n_0})$. Hence, $F(a_n)$ is holomorphic  on $\D\cup \bigcup_{n=1}^\infty \D(\lambda_n,2r_n)$ and
	$F\colon \ell_\infty\to\mathcal{H}^\infty(\D)$ is a  continuous linear mapping since $  \|F(a_n)\|\leq 4\|(a_n)\|_\infty$
	for all $(a_n)\in \ell_\infty$.
	
	Now we check that $F$  is bounded below. We already know that for each $(a_n)\in \ell_\infty$, the function  $F(a_n)$ is holomorphic on $\D\cup \bigcup_{n=1}^\infty \D(\lambda_n,2r_n)$ and bounded on $\D$. Thus, using \eqref{III} and the fact that $\lambda_p\in \overline{\mathbb D}$, we get
	\begin{align*}
		\|F(a_n)\|   &= \sup_{z\in \D}|F(a_n)(z)|  \geq \sup_{p\in \N}|F(a_n)(\lambda_p)|  \geq \sup_{p\in \N} \left\{|a_p|-\sum_{n\neq p} 3|a_n|\frac{r_n}{3^{n+1}}\right\}\\
		& \geq \sup_{p\in \N} \left\{|a_p|-\frac{\|(a_n)\|_{\infty}}{2}\right\}=\frac{\|(a_n)\|_{\infty}}{2}
	\end{align*}
	for every $(a_n)\in \ell_\infty$.
	\medskip

	Let's  check that if $(b_n)\in c_0$, then $F(b_n)$ belongs to $\mathcal{A}(\D)$.     Given $\varepsilon >0$, there exists $n_1\in \N$ such that $  |b_n|<\frac{\varepsilon}{3}$,  for every $n\geq n_1$. Thus, if $z\in
	\overline{\D}$.
	\begin{equation}\label{VI}
		\sum_{n=n_1}^{\infty}|b_nf_n(z)\varphi_n^{k_n}(z)|\leq 3\varepsilon \sum_{n=n_1}^{\infty} |\varphi_n^{k_n}(z)|.
	\end{equation}
	Now if,   $z\in \overline{\D}\setminus \Big(\bigcup_{n=1}^\infty \overline{\D}(\lambda_n,r_n)\Big)$,  then by\eqref{III}, $ |\varphi_n^{k_n}(z)| \leq \frac{r_n}{3^{n+1}}$. Hence, by \eqref{VI},
	\[
	\sum_{n=n_1}^{\infty}|b_nf_n(u)\varphi_n^{k_n}(z)|< \varepsilon.
	\]
	Otherwise, if $z\in\overline{\D}\cap \Big(\bigcup_{n=1}^\infty \overline{\D}(\lambda_n,r_n)\Big)$, there is a unique $n_0\in \N$ such that
	$z\in \overline{\D}(\lambda_{n_0}, r_{n_0})$ and
	\begin{equation*}
		\sum_{n=n_1}^{\infty}|b_nf_n(z)\varphi_n^{k_n}(z)|\leq \varepsilon+ \sum_{\stackrel{n=n_1}{n\neq n_0}}^{\infty} \varepsilon\frac{r_n}{3^{n+1}}<2\varepsilon.
	\end{equation*}
	Consequently the series $\sum_{n=1}^{\infty}b_nf_n(z)\varphi_n^{k_n}(z)$ converges absolutely and uniformly on $\overline{\D}$ and  $F_{|c_0}\colon c_0\to \mathcal{A}(\D)$ is a well-defined continuous linear mapping.
	
	Consider $(a_n)\in \ell_\infty\setminus \{0\}$. There exists $n_0$  such that $a_{n_0}\neq 0$. We are going to show that $F(a_n)'(z)$ is not bounded on $\D(\lambda_{n_0}, \frac{r_{n_0}}{3})\cap \D$.
	
	By the Weierstrass theorem,
	\[
	F(a_n)'(z)=\sum_{n=1}^{+\infty}a_n\big(f_n\varphi_n^{k_n}\big)'(z),
	\]
	for every $z\in \D\cup \bigcup_{n=1}^\infty \D(\lambda_n,2r_n)$.
	If $n\neq n_0$, then by the Cauchy integral formula
	\[
	\big(\varphi_n^{k_n}\big)'(z)=\frac{1}{2\pi i}\int_{C(\lambda_{n_0}, r_{n_0})}\frac{\varphi_n^{k_n}(u)}{(u-z)^2}du,
	\]
	for every $z\in \D(\lambda_{n_0}, \frac{r_{n_0}}{3})$.
	Thus, by \eqref{II} and \eqref{III}, we obtain
	\[
	\begin{split}
		\sup_{z\in \D(\lambda_{n_0}, \frac{r_{n_0}}{3})}|\big(\varphi_n^{k_n}\big)'(z)| &  \leq \frac{r_{n_0}}{(\frac{2}{3}r_{n_0})^2}\sup_{|u-\lambda_{n_0}|= r_{n_0}}|\varphi_n^{k_n}(u)|  <\frac{9}{4r_{n_0}}\frac{r_n}{3^{n+1}}<\frac{1}{r_{n_0}}\frac{1}{3^{n}},
	\end{split}
	\]
	and we get
	\[    |\big(f_n\varphi_n^{k_n}\big)'(z)|  \leq |f'_n(z)||\varphi_n^{k_n}(z)|+|f_n(z)||(\varphi_n^{k_n})'(z) <\frac{1}{3^n}+\frac{1}{r_{n_0}}\frac{1}{3^{n-1}},
	\]
	where in the second inequality we have applied, \eqref{II}, \eqref{III} and the properties of $f_n$ and $f'_n$ given in Lemma \ref{I}. Hence,
	\[
	|F(a_n)'(z)|\  \geq  |a_{n_0}||\big(f_{n_0}\varphi_{n_0}^{k_{n_0}}\big)'(z)|-\|(a_n)\|_{\infty}\big( \frac{1}{2}+\frac{3}{2r_{n_0}}\big),\\
	\]
	for every $z\in \D(\lambda_{n_0}, \frac{r_{n_0}}{3})$. Finally, by Lemma \ref{I}.$\ref{flambda_e}$, we have that
	$F(a_n)'$ is unbounded on $\D(\lambda_{n_0},\frac{r_{n_0}}{3})\cap\D$ and hence, $F(a_n)$ does not belong to $\mathcal{H}L(\D)$.
	
	Finally, if we define $F_1\colon\ell_\infty \to \mathcal H^\infty_0(\D)$ by
	$F_1(a_n)(z):=zF(a_n)(z)$ for $(a_n)\in \ell_\infty$ and $z\in \D$, it is clear that $F_1$ is an isomorphism onto its image and that   $F_1(\ell_\infty\setminus \{0\})\subset  \mathcal{H}^\infty_0(\D)\setminus \mathcal{H}L(\D)$.
	
\end{proof}

Finally, we note that if we are only interested in $\mathcal{A}(\D)$ there are known results  related to Theorem~\ref{th:embedl_inftyother}. Indeed, in three relevant papers \cite{BLS, BBLS2018,BBLS2019}, L. Bernal et al.  have obtained  many  results on the existence of large subspaces of functions that belong to $\mathcal{A}(\D)\setminus \mathcal HL(\D)\cup \{0\}$. In particular, in \cite[Th. 4.1.c]{BLS} the authors show that there exists an infinite dimensional Banach space $X$ contained in $\mathcal{A}(\D)$ such that any non-null function in $X$ is not differentiable on any point of a  fixed dense subset of $\mathbb{T}$. Also, in \cite[Th. 3.4]{BBLS2019}, they prove that there exists  an infinite dimensional Banach space $X$, contained in  $\mathcal{A}(\D)$, (which, however, is endowed with a stronger norm than the one inherited from  $\mathcal{A}(\D)$) such that if $f \in X$, then  the restriction of $f$ to $\mathbb T$ is nowhere H\"older on $\mathbb T$.

\subsection*{Acknowledgments} This paper had its start during the summer, 2022 in Banff, thanks to the BIRS Research in Teams program. The research was also partially supported by Agencia Estatal de Investigaci\'{o}n and EDRF/FEDER ``A way of making Europe" (MCIN/AEI/10.13039/501100011033) through grants PID2021-122126NB-C32 (Garc\'ia-Lirola) and PID2021-122126NB-C33 (Aron and Maestre). The research of Garc\'ia-Lirola was also supported by DGA project E48-20R, Fundaci\'on S\'eneca - ACyT Regi\'on de Murcia project 21955/PI/22 and Generalitat Valenciana project CIGE/2022/97. The research of Dimant was partially supported by CONICET PIP 11220200101609CO and ANPCyT PICT 2018-04104. The research of Maestre also was partially supported by PROMETEU/2021/070. We are very grateful to A.~Procházka for his comments that allowed us to shorten the proof of 
Theorem~\ref{teo: equivalencia AP} significantly, and to A.~Chávez-Domínguez for helpful conversations about Section \ref{sec:AP}.

\end{document}